\newcommand{\mfr}[1]{\ensuremath \mathfrak{#1}}
\newcommand{\mbb}[1]{\ensuremath \mathbb{#1}}
\newcommand{\mbf}[1]{\ensuremath \mathbf{#1}}
\newcommand{\mcl}[1]{\ensuremath \mathcal{#1}}
\newcommand{\mrm}[1]{\ensuremath \mathrm{#1}}
\newcommand{\R}{\mathbb{R}}
\DeclareMathOperator{\GL}{GL}
\DeclareMathOperator{\tr}{tr}
\DeclareMathOperator{\Gr}{Gr}
\DeclareMathOperator{\Rm}{Rm}
\DeclareMathOperator{\Ric}{Ric}
\DeclareMathOperator{\Rc}{Rc}
\DeclareMathOperator{\scal}{scal}
\DeclareMathOperator{\Lie}{Lie}
\DeclareMathOperator{\ad}{ad}
\DeclareMathOperator{\id}{id}
\DeclareMathOperator{\Der}{Der}
\DeclareMathOperator{\End}{End}
\DeclareMathOperator{\sym}{sym}
\DeclareMathOperator{\rank}{rank}
\DeclareMathOperator{\Aut}{Aut}
\newcommand{\Ro}{\mathring{R}}
\newcommand{\vv}{\vphantom{${\displaystyle \sum}$}}
\newcommand{\ip}[1]{\langle #1 \rangle}
\newcommand{\opA}{\mbf{L}}
\newcommand{\smfrac}[2]{{\textstyle{\frac{#1}{#2}}}}
\newcommand{\half}{{\smfrac{1}{2}}}
\numberwithin{equation}{section}
\theoremstyle{plain} 
\newtheorem{thm}[equation]{Theorem}
\newtheorem{cor}[equation]{Corollary}
\newtheorem{lem}[equation]{Lemma}
\newtheorem{prop}[equation]{Proposition}
\theoremstyle{definition}
\theoremstyle{remark}
\newtheorem{rem}[equation]{Remark}
\newtheorem{ex}[equation]{Example}
\begin{document}

\author{Michael Jablonski}
\address{Department of Mathematics, University of Oklahoma}
\email{mjablonski@math.ou.edu}
\urladdr{http://www2.math.ou.edu/~mjablonski/}

\author{Peter Petersen}
\address{Department of Mathematics, University of California, Los Angeles}
\email{petersen@math.ucla.edu}
\urladdr{http://www.math.ucla.edu/~petersen/}

\author[M.~B.~Williams]{Michael Bradford Williams}
\address{Department of Mathematics, University of California, Los Angeles}
\email{mwilliams@math.ucla.edu}
\urladdr{http://www.math.ucla.edu/~mwilliams/}

\thanks{The first author was supported by NSF grant DMS-1105647 and the second author was supported by NSF grant DMS-1006677.}

\title[Linear stability of algebraic {R}icci solitons]{Linear stability of algebraic {R}icci solitons}

\begin{abstract}
We consider a modified Ricci flow equation whose stationary solutions include Einstein and Ricci soliton metrics, and we study the linear stability of those solutions relative to the flow.  After deriving various criteria that imply linear stability, we turn our attention to left-invariant soliton metrics on (non-compact) simply connected solvable Lie groups and prove linear stability of many such metrics.  These include an open set of two-step solvsolitons, all two-step nilsolitons, two infinite families of three-step solvable Einstein metrics, all nilsolitons of dimensions six or less, and all solvable Einstein metrics of dimension seven or less with codimension-one nilradical.  For each linearly stable metric, dynamical stability follows from a generalization of the techniques of Guenther, Isenberg, and Knopf.
\end{abstract}

\subjclass[2010]{53C25, 53C30, 53C44, 22E25}


\maketitle

\tableofcontents

\section{Introduction}\label{sec:intro}

Ricci flow is now a standard tool in geometry and topology, and one of its main uses is to provide a means of deforming a given Riemannian metric, hopefully into one that is ``distinguished'' in some sense.  On a fixed Riemannian manifold, Ricci flow may be viewed as a dynamical system on the space of Riemannian metrics, modulo diffeomorphisms.  In this context, distinguished metrics are Einstein or Ricci soliton metrics, and we phrase the problem of ``deforming into a distinguished metric'' in terms of dynamical stability.  Namely, given a stationary solution $g_0$ of Ricci flow and some topology on the space of metrics, does there exist a neighborhood $U$ of $g_0$ such that all Ricci flow solutions with initial data in $U$ converge to $g_0$?

Many authors have considered stability for solutions of Ricci flow; see the introduction of \cite{Wu2013} for a description of some of the results and techniques used.  The approach to stability that we take in this paper follows the program initiated by Guenther, Isenberg, and Knopf in \cite{GuentherIsenbergKnopf2002}, which, roughly speaking, has three steps:
\begin{enumerate}
\item[1.] Modify Ricci flow (e.g., by diffeomorphisms and rescaling) so that it has a suitable class of fixed points.
\item[2.] Compute the linearization at a fixed point and prove linear stability.
\item[3.] Set up a sequence of tensor spaces with certain interpolation properties and apply a theorem of Simonett to get dynamical stability.
\end{enumerate}
Since its introduction in \cite{GuentherIsenbergKnopf2002}, this program has been used to prove a variety of other stability results \cite{Knopf2009,KnopfYoung2009,Young2010,Williams2013-systems,Wu2013}.

For Step 1, the flow that we wish to study appeared in \cite{GuentherIsenbergKnopf2006}.  Given a manifold $\mcl{M}^n$, $\lambda \in \R$, and a vector field $X$, the \textit{curvature-normalized Ricci flow} is
\begin{equation}\label{eq:cnrf}
\partial_t g = -2 \Rc + 2\lambda g + \mcl{L}_X g.
\end{equation}
A Ricci soliton $(\mcl{M},g,\lambda,X)$, which satisfies
\begin{equation}\label{eq:ricci-soliton}
\Rc = \lambda g + \half \mcl{L}_X g,
\end{equation}
is clearly a stationary solution of \eqref{eq:cnrf}.  (Recall that an Einstein metric corresponds to a Ricci soliton where the field $X$ is a Killing field.)

For Step 2, the linearization of the flow \eqref{eq:cnrf} was also considered in \cite{GuentherIsenbergKnopf2006}.  After modification by DeTurck diffeomorphisms, the linearization is
\begin{equation}\label{eq:cnrf-lin}
\partial_t h = \opA h := \Delta_L h + 2\lambda h + \mcl{L}_X h,
\end{equation}
where $\Delta_L h$ is the Lichnerowicz Laplacian acting on symmetric 2-tensors.  A stationary solution of \eqref{eq:cnrf} is \textit{strictly} (resp.~\textit{weakly}) \textit{linearly stable} if the operator $\opA$ has negative (resp.~non-positive) spectrum, that is, if there exists some $\epsilon >0$ (resp.~$\epsilon=0$) such that
\[ (\opA h,h) \leq - \epsilon \|h\|^2 \]
for all symmetric 2-tensors $h$ taken from some appropriate tensor space.  

We emphasize that the notion of linear stability for a given metric depends on a particular linear operator.  For example, Koiso studied stability of Einstein metrics with respect to the second variation of the total scalar curvature functional \cite{Koiso1979,Koiso1980,Koiso1982}.  Other authors have studied stability of shrinking ($\lambda > 0$), gradient ($X = \nabla f$ for some $f \in C^\infty(\mcl{M})$) Ricci solitons with respect to the second variation of Perelman's $\nu$-functional; see, for example \cite{HallMurphy2011,CaoZhu2012,CaoHe2013}.  

In this paper, we will not be directly concerned with Step 3 and the aspects of dynamical stability.  Indeed, our goal is to study the \textit{linear stability} of homogeneous Einstein and Ricci soliton metrics on non-compact manifolds, with respect to the operator from \eqref{eq:cnrf-lin}.  (Step 3 is, however, the primary focus of a companion paper by Wu and the third author \cite{WilliamsWu2013-dynamical}, see Theorem \ref{thm:dynamical-all} below.)  The first results for such metrics are found in \cite{GuentherIsenbergKnopf2006}, where the authors give a detailed analysis of homogeneous solitons on three spaces:~$\mrm{Nil}^3$, $\mrm{Sol}^3$, and $\mrm{Nil}^4$.  

Since that work, much has been discovered about homogeneous solitons.  It turns out that the only non-trivial homogeneous Ricci solitons occur in the non-compact, expanding ($\lambda < 0$), and non-gradient ($X \ne \nabla f$ for any $f \in C^\infty(\mcl{M})$) case; see Section 2 of \cite{Lauret2011-sol} for a discussion of this fact.  Additionally, all known examples can be realized as left-invariant metrics on simply connected solvable Lie groups (or, equivalently, as inner products on solvable Lie algebras---we'll use both interpretations), such that
\begin{equation}\label{eq:alg-soliton}
\Ric = \lambda \id + D,
\end{equation}
where $\Ric$ is the Ricci endomorphism, $\lambda \in \mbb{R}$, and $D$ is a derivation of the Lie algebra.  A metric satisfying this equation is called an \textit{algebraic soliton}; on nilpotent and solvable Lie groups, such metrics are called \textit{nilsolitons} and \textit{solvsolitons}, respectively.  

An algebraic soliton is in fact a Ricci soliton (see, e.g., \cite{Lauret2011-sol}); conversely, a Ricci soliton on a solvable Lie group is isometric to a solvsoliton (on a possibly different Lie group).  Also, a solvable Lie group admitting a non-flat algebraic soliton is diffeomorphic to $\mbb{R}^n$.  These last two facts appear in \cite{Jablonski2011-hom}.  Lauret has also proven strong structural results for algebraic solitons \cite{Lauret2011-sol}.  Essentially, nilsolitons are the nilpotent parts of solvsolitons---a solvsoliton implies the existence of a nilsoliton on the nilradical, and conversely, any nilsoliton can be extended in a specific way to a solvsoliton.  See Theorem \ref{thm:lauret} for more details.  This new knowledge allows us to expand upon the linear stability work done in \cite{GuentherIsenbergKnopf2006}.

Here is an outline of the paper and a description of the results.  In Section \ref{sec:lin-criteria}, we analyze the linearized Ricci flow operator from \eqref{eq:cnrf-lin} and derive a sufficient condition for linear stability of soliton metrics (including Einstein metrics) under the flow.  This condition covers a somewhat general class of metrics, but our main focus is the algebraic soliton case, where $\mrm{div}(X) = \tr D$.  

\begin{prop}\label{prop:stab-conds}
Let $(\mcl{M},g,\lambda,X)$ be a Ricci soliton with constant scalar curvature.  The following condition implies that $g$ is strictly linearly stable:
\begin{equation}\label{eq:stab-cond}
(\Ro h + \Ric \circ h,h) < \half \mrm{div}(X) \|h\|^2
\end{equation}
for all symmetric 2-tensors $h$.  When $g$ is $\lambda$-Einstein, this condition is
\begin{equation}\label{eq:stab-einst}
(\Ro h,h) < -\lambda \|h\|^2.
\end{equation}
\end{prop}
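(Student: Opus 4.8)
The plan is to analyze the quadratic form $(\opA h, h)$ associated with the linearized operator from \eqref{eq:cnrf-lin} and show that the stated condition \eqref{eq:stab-cond} forces it to be strictly negative. Recall that $\opA h = \Delta_L h + 2\lambda h + \mcl{L}_X h$, so I would begin by decomposing this into three pieces and estimating each separately. The Lichnerowicz Laplacian admits a Weitzenb\"ock-type formula $\Delta_L h = \Delta h + 2\Ro h - \Ric \circ h - h \circ \Ric$, where $\Delta$ is the (rough/connection) Laplacian and $\Ro$ is the curvature action on symmetric $2$-tensors. Integrating by parts (or using self-adjointness in the homogeneous setting), the rough Laplacian contributes $(\Delta h, h) = -\|\nabla h\|^2 \le 0$, which is the term that provides the favorable sign and can only help.

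First I would assemble the curvature terms. Since $h$ is symmetric and $\Ric$ is self-adjoint, the two terms $\Ric \circ h$ and $h \circ \Ric$ pair identically against $h$, giving $(\Ric \circ h + h \circ \Ric, h) = 2(\Ric \circ h, h)$; combined with $2(\Ro h, h)$ from the Weitzenb\"ock formula this yields the combination $2(\Ro h + \Ric \circ h, h)$ appearing on the left of \eqref{eq:stab-cond}, up to the overall factor. Next I would handle the Lie derivative term. The key identity to establish is that $(\mcl{L}_X h, h) = -\,\mrm{div}(X)\,\|h\|^2$ (or an inequality of this form) under the constant-scalar-curvature hypothesis; here I expect to integrate by parts against the flow of $X$ and use that $\mcl{L}_X$ acts as a Lie-derivative transport whose skew part integrates to zero and whose symmetric part is controlled by the divergence of $X$. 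Collecting everything, $(\opA h, h) \le 2(\Ro h + \Ric \circ h, h) + 2\lambda\|h\|^2 - \mrm{div}(X)\|h\|^2$ modulo the nonpositive $-2\|\nabla h\|^2$ term. I would then need to reconcile the $2\lambda$ term: using the soliton equation \eqref{eq:ricci-soliton} and the constant-scalar-curvature assumption (which ties $\mrm{div}(X)$, $\lambda$, and $\scal$ together), the $2\lambda\|h\|^2$ contribution should be absorbed so that strict negativity of $(\opA h, h)$ reduces exactly to the stated inequality $(\Ro h + \Ric\circ h, h) < \half\,\mrm{div}(X)\|h\|^2$.

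The main obstacle will be the precise bookkeeping of the Lie derivative term and verifying that it produces exactly $-\mrm{div}(X)\|h\|^2$ (and not merely a one-sided bound) together with confirming the role of the constant-scalar-curvature hypothesis, since $X$ need not be a gradient field and $\mcl{L}_X$ has both a symmetric and a skew-symmetric part relative to the $L^2$ inner product. I would pay careful attention to which adjoint relations and boundary/decay or homogeneity assumptions justify the integration by parts, so that the cross terms vanish cleanly. For the Einstein specialization, I would set $X$ to be a Killing field so that $\mcl{L}_X g = 0$ and $\mrm{div}(X) = 0$; then $\Ric = \lambda\,\id$ gives $\Ric \circ h = \lambda h$, so $(\Ric \circ h, h) = \lambda\|h\|^2$, and substituting into \eqref{eq:stab-cond} turns $(\Ro h + \lambda h, h) < 0$ into the stated \eqref{eq:stab-einst}, namely $(\Ro h, h) < -\lambda\|h\|^2$. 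This last reduction is a direct computation once the general case is in hand.
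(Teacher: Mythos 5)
There is a genuine gap here, concentrated in two places. First, your key identity for the Lie derivative term is wrong. Writing $\mcl{L}_X h = \nabla_X h + \Xi_X(h)$ with $\Xi_X(h)(Y,Z) = h(\nabla_Y X, Z) + h(Y,\nabla_Z X)$, integration by parts gives $(\nabla_X h, h) = -\half \mrm{div}(X)\|h\|^2$ (coefficient $\half$, not $1$), and the zeroth-order part does \emph{not} vanish: on the endomorphism level $\langle \Xi_X(h),h\rangle = 2\langle D\circ h, h\rangle$ with $D = \half(\nabla X + (\nabla X)^*)$, and the soliton equation $\Ric = \lambda\id + D$ turns this into $2(\Ric\circ h, h) - 2\lambda\|h\|^2$. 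So the correct identity is $(\mcl{L}_X h,h) = -\half\mrm{div}(X)\|h\|^2 + 2(\Ric\circ h,h) - 2\lambda\|h\|^2$. This is exactly where the soliton equation enters and where the troublesome $2\lambda\|h\|^2$ from the linearization gets cancelled --- the absorption you hoped for does not come from the constant-scalar-curvature hypothesis but from $\Xi_X$. Your version, $(\mcl{L}_X h,h) = -\mrm{div}(X)\|h\|^2$, loses both the factor of $\half$ and the entire mechanism that produces the $+(\Ric\circ h,h)$ term and kills $2\lambda\|h\|^2$. Relatedly, the Weitzenb\"ock formula contributes $+2(\Ro h,h) - 2(\Ric\circ h,h)$, i.e.\ the Ricci-composition terms enter with a \emph{minus} sign, so they cannot by themselves assemble into $2(\Ro h + \Ric\circ h, h)$ as you claim.

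Second, discarding $-\|\nabla h\|^2$ as merely ``favorable'' does not lead to the stated condition. If you track everything correctly with that term dropped, you arrive at $(\opA h,h) \le 2(\Ro h,h) - \half\mrm{div}(X)\|h\|^2$, which gives the \emph{different} sufficient condition $(\Ro h,h) < \smfrac{1}{4}\mrm{div}(X)\|h\|^2$ (this is the remark following the proposition in the paper, not the proposition itself). To obtain $(\Ro h + \Ric\circ h, h) < \half\mrm{div}(X)\|h\|^2$ you must trade part of $-\|\nabla h\|^2$ for curvature via Koiso's Bochner formula, $\|\nabla h\|^2 = \half\|T\|^2 + \|\delta h\|^2 - \half(\mfr{Ric}(h),h)$ --- equivalently, use the non-positivity of the Codazzi Laplacian $-\Delta_C = \nabla^*\nabla + \half\mfr{Ric}$. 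That step supplies the remaining $-(\Ro h,h) + (\Ric\circ h,h)$ needed to produce $Q(h)$ with the correct coefficients. Your Einstein specialization at the end is fine once the general inequality is in hand, but the general inequality is not reachable by the route you describe.
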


Here, $\Ro$ is the action of the Riemann curvature tensor on symmetric 2-tensors.  We also have criteria based on sectional curvature bounds.

\begin{prop}\label{prop:sec-stab}
Let $(\mcl{M},g,\lambda,X)$ be a Ricci soliton with constant scalar curvature, and suppose that the sectional curvature satisfies $\sec \leq K \leq 0$ for some constant $K$.  A sufficient condition for $g$ to be strictly linearly stable is that
\[ (n-2) K < \half \mrm{div}(X). \]
\end{prop}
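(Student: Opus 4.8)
The plan is to obtain this as a consequence of Proposition \ref{prop:stab-conds}. Since $g$ is assumed to have constant scalar curvature, it suffices to verify the curvature inequality \eqref{eq:stab-cond}; concretely, I would show that the sectional-curvature hypothesis forces
\[
(\Ro h + \Ric \circ h, h) \leq (n-2) K \|h\|^2
\]
for every symmetric $2$-tensor $h$, after which the assumption $(n-2)K < \half \mathrm{div}(X)$ gives \eqref{eq:stab-cond} and hence strict linear stability. Because both sides are pointwise-local, I would first prove the corresponding pointwise bound at an arbitrary point $p$; integrating over $\mcl{M}$ (or reading it algebraically, in the homogeneous case) then yields the stated estimate.

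The computational heart is to choose at $p$ a $g$-orthonormal frame $\{e_i\}$ diagonalizing the endomorphism associated to $h$, so that $h$ has diagonal entries $\mu_1, \dots, \mu_n$. Writing $\kappa_{ij} = \sec(\mathrm{span}\{e_i,e_j\}) = R_{ijij}$ for the sectional curvatures, a direct computation gives $\langle \Ro h, h \rangle = \sum_{i \neq j} \kappa_{ij} \mu_i \mu_j$, while $\langle \Ric \circ h, h \rangle = \sum_i \Ric_{ii} \mu_i^2 = \sum_{i \neq j} \kappa_{ij} \mu_i^2$, since $\Ric_{ii} = \sum_{j} \kappa_{ij}$ and only the diagonal of $\Ric$ pairs with the diagonal $h^2$. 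Summing over unordered pairs and using $\kappa_{ij} = \kappa_{ji}$, these combine into the identity
\[
(\Ro h + \Ric \circ h, h) = \sum_{i<j} \kappa_{ij} (\mu_i + \mu_j)^2 .
\]
I expect verifying this factorization to be the one genuinely delicate step; in particular, pinning down the index convention for $\Ro$ and the normalization relating $R_{ijij}$ to $\sec$ is where care is needed, as the clean cancellation into $(\mu_i+\mu_j)^2$ is exactly what makes a sectional-curvature bound usable.

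With this identity in hand the estimate is immediate, and it is here that the two inequalities $\sec \leq K$ and $K \leq 0$ enter in genuinely different ways. Since each $(\mu_i+\mu_j)^2 \geq 0$, the bound $\kappa_{ij} \leq K$ gives $\sum_{i<j} \kappa_{ij}(\mu_i+\mu_j)^2 \leq K \sum_{i<j}(\mu_i+\mu_j)^2$. Expanding, $\sum_{i<j}(\mu_i+\mu_j)^2 = (n-2)\sum_i \mu_i^2 + \big(\sum_i \mu_i\big)^2 = (n-2)\|h\|^2 + (\tr h)^2$. Finally, because $K \leq 0$ and $(\tr h)^2 \geq 0$, the term $K(\tr h)^2$ is nonpositive and may be dropped, leaving $(\Ro h + \Ric \circ h, h) \leq (n-2) K \|h\|^2$; note that it is precisely this second use of $K \leq 0$ that lets the argument cover all symmetric $h$ rather than only trace-free ones. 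Comparing with $(n-2)K < \half \mathrm{div}(X)$ then yields \eqref{eq:stab-cond} strictly for all $h \neq 0$, and Proposition \ref{prop:stab-conds} finishes the proof.
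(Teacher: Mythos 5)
Your proposal is correct and follows essentially the same route as the paper: diagonalize $h$, obtain the factorization $Q(h)=\tfrac12\sum_{i\ne j}\sec(i,j)(h_{ii}+h_{jj})^2$, bound by $K$ to get $(n-2)K\|h\|^2+K|\tr h|^2$, and drop the trace term using $K\le 0$. The only cosmetic difference is that you invoke Proposition \ref{prop:stab-conds} as a black box while the paper substitutes the estimate directly into \eqref{eq:A-exp-1}; these are the same argument.
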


We emphasize that these results only give \textit{sufficient} conditions for linear stability, as they do not involve the full operator from \eqref{eq:cnrf-lin}.  The hope is that, in the case of solvable Lie groups, one can use algebraic methods to verify these conditions relatively simply.  It turns out that this hope is too optimistic, as there are examples of metrics---both Einstein and non-Einstein---for which these Propositions fail to give stability; see Subsection \ref{subsec:abelian-einstein}.  However, as we will see in Theorem \ref{thm:stab-all}, these Propositions succeed in many other cases.

In Section \ref{sec:extensions}, we explore the relationship between linear stability of a solvable Einstein metric and its associated nilsoliton in the case of codimension-one nilradical.  Using Proposition \ref{prop:stab-conds}, we show that stability of the Einstein metric implies stability of the nilsoliton when the eigenvalues of the nilsoliton derivation are not too large, relative to the trace of the derivation.

\begin{prop}\label{prop:ext-stab}
Let $\mfr{s}$ be a solvable Lie algebra with codimension-one nilradical $\mfr{n} = [\mfr{s},\mfr{s}]$.  If a $\lambda$-Einstein metric on $\mfr{s}$ satisfies \eqref{eq:stab-einst}, then a sufficient condition for the nilsoliton on $\mfr{n}$ to be strictly linearly stable is that
\[ \max\{d_i\} < \frac{1}{2+\sqrt{2}} \tr D, \]
where $d_1,\dots,d_n$ are the eigenvalues of the nilsoliton derivation $D$.
\end{prop}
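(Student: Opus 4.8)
The plan is to verify, for the induced nilsoliton on $\mfr n$, the sufficient condition of Proposition \ref{prop:stab-conds}. Since $\mrm{div}(X) = \tr D$ in the algebraic case, I must show
\[ (\Ro^{\mfr n} h + \Ric^{\mfr n}\!\circ h, h) < \half (\tr D)\,\|h\|^2 \]
for every symmetric $2$-tensor $h$ on $\mfr n$. The link to the hypothesis is to view $\mfr n$ as a hypersurface in the Einstein solvmanifold $\mfr s = \mfr n \rtimes \R H$ (with $|H|=1$, $H \perp \mfr n$), to extend $h$ to the symmetric $2$-tensor $\tilde h$ on $\mfr s$ that vanishes whenever $H$ is inserted, and to apply the Einstein stability inequality \eqref{eq:stab-einst} to $\tilde h$: since $\tilde h$ is supported on $\mfr n$, $(\Ro^{\mfr s}\tilde h, \tilde h) < -\lambda\|\tilde h\|^2 = -\lambda\|h\|^2$.

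Next I would record the geometry of the codimension-one extension supplied by Theorem \ref{thm:lauret}. The shape operator $S = \ad_H|_{\mfr n}$ is symmetric; because the identity is not a derivation of a non-abelian $\mfr n$, the block-diagonal Ricci relation forces $S$ to be a positive multiple $S = cD$ of the nilsoliton derivation. A Koszul computation identifies the second fundamental form of $\mfr n$ as $\langle S\,\cdot\,,\cdot\rangle$, so that in an orthonormal eigenbasis $\{e_i\}$ with $De_i = d_ie_i$ (hence $Se_i = s_ie_i$, $s_i = cd_i$) the Gauss equation yields the exact identity
\[ (\Ro^{\mfr n} h, h) = (\Ro^{\mfr s}\tilde h, \tilde h) + \Big(\sum_i s_i h_{ii}\Big)^2 - \sum_{i,k} s_i s_k\, h_{ik}^2, \]
while the standard hypersurface formulas give $\Ric^{\mfr s}|_{\mfr n} = \Ric^{\mfr n} - (\tr S)\,S$ and $\Ric^{\mfr s}(H,H) = -\tr(S^2)$. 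Imposing $\Ric^{\mfr s} = \lambda\,\id$ then pins the normalization: $c^2\tr D = 1$, $\lambda = \lambda_{\mfr n}$, and $\Ric^{\mfr n} = \lambda\,\id + D$, so that $(\Ric^{\mfr n}\!\circ h, h) = \lambda\|h\|^2 + \sum_{i,j} d_i h_{ij}^2$.

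Assembling these and using \eqref{eq:stab-einst} to cancel the two $\lambda\|h\|^2$ terms, the claim reduces to the purely algebraic inequality
\[ \tfrac{1}{T}\Big(\sum_i d_i h_{ii}\Big)^2 - \tfrac{1}{T}\sum_{i,k} d_i d_k\, h_{ik}^2 + \sum_{i,j} d_i h_{ij}^2 \;<\; \tfrac{1}{2} T\,\|h\|^2, \qquad T := \tr D. \]
Splitting into diagonal and off-diagonal entries, each off-diagonal coefficient is at most $\max\{d_i\} < \half T$. For the diagonal part, Cauchy--Schwarz gives $\tfrac1T(\sum_i d_i h_{ii})^2 \le \sum_i d_i h_{ii}^2$, leaving the coefficient $d_i(2 - d_i/T)$, which increases in $d_i$ and is therefore at most $M(2 - M/T)$ with $M = \max\{d_i\}$. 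Writing $t = M/T$, the bound $M(2 - M/T) \le \half T$ is exactly $t(2-t) \le \half$, i.e.\ $t \le 1 - \tfrac{1}{\sqrt2} = \frac{1}{2+\sqrt2}$. Thus the hypothesis $\max\{d_i\} < \frac{1}{2+\sqrt2}\tr D$ produces a uniform spectral gap, giving strict linear stability; moreover the diagonal estimate is the binding one, so the constant is sharp for this argument.

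The main obstacle is the geometric bookkeeping of the middle step rather than the final inequality: one must fix consistent sign conventions for $\Ro$ and for the Gauss equation, confirm via the Koszul formula that the shape operator equals a scalar multiple of the nilsoliton derivation, and verify the block-diagonal Ricci relations $\Ric^{\mfr s}|_{\mfr n} = \Ric^{\mfr n} - (\tr S)S$ and $\Ric^{\mfr s}(H,H) = -\tr(S^2)$ (both of which can be checked against the model $\mfr s = \R^n \rtimes \R H$ giving $\mathbb{H}^{n+1}$). Once the comparison identity is in hand, the Einstein stability hypothesis cancels the curvature of $\mfr s$ against $\lambda$, and only the elementary scalar estimate---whose threshold $t(2-t) \le \half$ reproduces $\frac{1}{2+\sqrt2}$---remains.
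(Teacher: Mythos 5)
Your proposal is correct and follows essentially the same route as the paper: the identification of the shape operator with $D/\sqrt{\tr D}$ is the paper's Lemma \ref{lem:1-dim-D}, your Gauss-equation identity is exactly \eqref{eq:ro-relations}, and the reduction to $\langle D,h\rangle^2 - \tr(D\circ h)^2 + \tr D\,\langle D\circ h,h\rangle < \tfrac12(\tr D)^2|h|^2$ with the diagonal/off-diagonal split and the threshold $t(2-t)\le\tfrac12$ reproduces the paper's inequalities \eqref{eq:ineq1}--\eqref{eq:ineq2}. The only cosmetic differences are your use of Cauchy--Schwarz where the paper invokes AM--GM (equivalent here) and your derivation of $S=cD$ from the block Ricci relations rather than directly from Theorem \ref{thm:lauret}.
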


Also in that section we use the continuity of the curvature operators to show that any non-nilpotent solvsoliton satisfying \eqref{eq:stab-cond} is contained in an open set of stable solvsolitons.  This set is open in the moduli space of solvsoliton extensions of the nilradical of the original stable metric.

In Section \ref{sec:abelian}, we consider abelian nilsolitons (which are trivially stable) and their solvsoliton extensions.  We calculate the sectional curvatures and use Proposition \ref{prop:sec-stab} to find an open set (in an appropriate moduli space) of stable solvsolitons whose nilradicals have codimension one; see Proposition \ref{prop:abelian-sec}.

In Section \ref{sec:2step}, we use Proposition \ref{prop:stab-conds} to derive conditions for stability of two-step nilsolitons.  Using structural results due to Eberlein \cite{Eberlein2008}, and analyzing the possible Ricci tensors of nilsoliton metrics, we prove that all two-step nilsolitons are stable; see Proposition \ref{prop:2step-stab}.  The space of all such solitons is quite large; in particular, there are continuous families \cite{Jablonski2011-moduli}.

In Section \ref{sec:3step}, we consider linear stability of solvable Einstein metrics whose nilradicals are two-step and codimension-one.  While the techniques of Section \ref{sec:2step} do not work in all cases here, we are able to obtain stability for two countably infinite families of metrics.  First are those where the nilradical is a generalized Heisenberg group, and second are where the nilradical is a free two-step nilpotent Lie group; see Proposition \ref{prop:stab-solv-heis-free}.  We also briefly discuss spaces of negative curvature and Proposition \ref{prop:sec-stab}.  In particular, we use results of Heber \cite{Heber1998} to find high-dimensional families of stable Einstein metrics that arise as deformations of certain quaternionic and Cayley hyperbolic spaces.

In Section \ref{sec:low-dim}, we consider ``low-dimensional'' examples.  Namely, we prove linear stability of all nilsolitons in dimension six or less, using the classification found in \cite{Will2011}.  We also prove linear stability for the corresponding one-dimensional solvable Einstein extensions, except in the case of the hyperbolic space $H^2$, which is only weakly linearly stable.  This gives almost 100 examples of stable metrics on solvable Lie groups.  See Proposition \ref{prop:low-dim} and Tables \ref{table:solitons} and \ref{table:solitons2}.  We show that each metric in a curve of seven-dimensional nilsolitons is strictly linearly stable, and also their one-dimensional Einstein extensions.  Finally, we show that each solvsoliton extension of the three-dimensional Heisenberg algebra is strictly linearly stable. 

Here is a summary of the spaces shown to be stable.

\begin{thm}\label{thm:stab-all}
The following algebraic solitons are strictly linearly stable with respect to the curvature-normalized Ricci flow:
\begin{enumerate}
\item\label{item1} every nilsoliton of dimension six or less, and every member of a certain one-parameter family of seven-dimensional nilsolitons;
\item every abelian or two-step nilsoliton;
\item every four-dimensional solvsoliton whose nilradical is the three-dimensional Heisenberg algebra;
\item an open set of solvsolitons whose nilradicals are codimension-one and abelian;
\item every solvable Einstein metric whose nilradical is codimension-one and 
\begin{enumerate}
\item found in \ref{item1} and has dimension greater than one, or
\item a generalized Heisenberg algebra, or
\item a free two-step nilpotent algebra;
\end{enumerate}
\item for each $m \geq 2$, an $(8m^2-6m-8)$-dimensional family of negatively-curved Einstein metrics containing the quaternionic hyperbolic space $\mbb{H}H^{m+1}$;
\item an $84$-dimensional family of negatively-curved Einstein metrics containing the Cayley hyperbolic plane $\mbb{C}aH^2$.
\end{enumerate}
Further, any non-nilpotent solvsoliton $\mfr{s} = \mfr{n} \rtimes \mfr{a}$ satisfying \eqref{eq:stab-cond} and $0 < \dim(\mfr{a}) < \rank(\mfr{n})$ is contained in an open set of stable solvsolitons.
\end{thm}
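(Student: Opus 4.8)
The plan is to realize the desired stable neighborhood directly inside the moduli space of solvsoliton extensions of the fixed nilradical, exploiting the fact that \eqref{eq:stab-cond} is an \emph{open} condition. By the structural correspondence of Theorem \ref{thm:lauret}, a non-nilpotent solvsoliton $\mfr{s} = \mfr{n} \rtimes \mfr{a}$ is determined by its nilsoliton $(\mfr{n}, \langle\cdot,\cdot\rangle_{\mfr{n}}, D_{\mfr{n}})$ together with the abelian algebra $\mfr{a}$, which acts on $\mfr{n}$ by symmetric derivations of the nilsoliton metric. First I would fix the nilradical and its nilsoliton inner product and regard the extensions with $\dim\mfr{a} = k$ as parametrized by the $k$-dimensional abelian subalgebras of the finite-dimensional space $\Der_{\sym}(\mfr{n})$. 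Since a maximal abelian subalgebra of symmetric derivations has dimension $\rank(\mfr{n})$, the hypothesis $0 < k < \rank(\mfr{n})$ guarantees that this parameter space is positive-dimensional near the given $\mfr{a}$, so that a genuine open set is available; the condition $k > 0$ simply records that we are in the non-nilpotent (proper extension) regime.

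The key analytic input is that the relevant curvature quantities vary continuously over this parameter space. Keeping the underlying vector space $\mfr{n} \oplus \R^k$ fixed, as $\mfr{a}$ varies the bracket of $\mfr{s}$ and the inner product prescribed by Lauret's construction vary continuously; since $\Ro$ and $\Ric$ are polynomial in the structure constants and the metric, they too vary continuously, as does $\mrm{div}(X) = \tr D$. Because the metric is left-invariant, the condition \eqref{eq:stab-cond} is pointwise and hence reduces to an inequality on the finite-dimensional space of symmetric $2$-tensors on $\mfr{s}$, where $h \mapsto \Ro h + \Ric \circ h - \half\,\mrm{div}(X)\,h$ is a symmetric endomorphism. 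I would therefore define the \emph{stability margin}
\[
\mu(\mfr{a}) = \max_{\|h\|=1}\Big( (\Ro h + \Ric \circ h, h) - \half\,\mrm{div}(X)\,\|h\|^2 \Big),
\]
the largest eigenvalue of that endomorphism. As the eigenvalues of a symmetric matrix depend continuously on its entries, $\mu$ is a continuous function of $\mfr{a}$.

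Condition \eqref{eq:stab-cond} for the original solvsoliton is precisely the statement $\mu(\mfr{a}_0) < 0$. By continuity there is an open neighborhood of $\mfr{a}_0$ in the parameter space on which $\mu < 0$, that is, on which \eqref{eq:stab-cond} persists. Every solvsoliton in this neighborhood is again a homogeneous---hence constant-scalar-curvature---Ricci soliton by Theorem \ref{thm:lauret}, so Proposition \ref{prop:stab-conds} applies and certifies strict linear stability. Together with the dimension count above, this exhibits the required open, positive-dimensional set of stable solvsolitons inside the moduli space of solvsoliton extensions of $\mfr{n}$.

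The main obstacle I anticipate is organizational rather than computational: one must extract the correct moduli space from Theorem \ref{thm:lauret} and verify that Lauret's recipe assigns to each nearby $\mfr{a}$ a metric and derivation depending continuously on $\mfr{a}$, so that the identification of the symmetric-$2$-tensor spaces is consistent and $\mu$ is genuinely continuous. The delicate point is ensuring that every deformed algebra remains an honest solvsoliton---rather than merely a solvable metric Lie algebra---so that Proposition \ref{prop:stab-conds} applies to every member of the family; this, however, is exactly what the structural correspondence in Theorem \ref{thm:lauret} provides.
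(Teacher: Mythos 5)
Your proposal addresses only the final assertion of the theorem (the ``Further, any non-nilpotent solvsoliton \ldots is contained in an open set of stable solvsolitons'' clause), and for that clause your argument is essentially the paper's: one fixes the nilsoliton on $\mfr{n}$, parametrizes the $k$-dimensional extensions by (a quotient of) the Grassmannian $\Gr_k(\mfr{a}_\mfr{n})$ as in Proposition \ref{prop:sol-space}, observes that the mean curvature vector, the derivation $D$, and the curvature operators $\Ro$ and $\Ric$ are determined continuously by $\mfr{a}$ through the structure constants, and concludes that the open condition of Corollary \ref{cor:alg-stab-cond} persists on a neighborhood. Your ``stability margin'' $\mu(\mfr{a})$ is a clean way to package the continuity argument, and your remark that Theorem \ref{thm:lauret} guarantees every nearby extension is again an honest solvsoliton is exactly the point the paper relies on. So for this clause the proposal is correct and takes the same route.

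The genuine gap is that items (1)--(7) of the theorem are not addressed at all, and they constitute the bulk of the statement; the theorem is a summary of results whose proofs occupy Sections \ref{sec:abelian} through \ref{sec:low-dim} and cannot be obtained from the continuity/openness argument. Each item requires its own substantive input: item (2) for two-step nilsolitons rests on the moment-map eigenvalue estimates of Lemma \ref{lem:2step-ric-est} combined with the term-by-term bounds on $Q$ in Proposition \ref{prop:2step-stab-cond}; item (5b)--(5c) requires the explicit curvature expansion for three-step solvable extensions, the ``nice basis'' lemma for Clifford modules, and the weighted AM--GM coefficient bookkeeping of Section \ref{sec:3step}; items (1) and (3) rest on case-by-case verification of the spectral bounds in Tables \ref{table:solitons} and \ref{table:solitons2}; and items (6)--(7) follow from Heber's deformation families together with the sectional-curvature criterion of Proposition \ref{prop:sec-stab}, not from Proposition \ref{prop:stab-conds}. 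In particular, no single open-condition argument can yield these items, since several of them (e.g.\ the negatively curved families) use a different sufficient criterion altogether. To complete the proof you would need to either carry out or cite each of these arguments separately.
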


This list is, of course, not complete.  One might be tempted to conjecture that all solvsolitons are strictly linearly stable, but this is not clear.  As mentioned above, in Subsection \ref{subsec:abelian-einstein} we give examples of metrics for which both Propositions \ref{prop:stab-conds} and \ref{prop:sec-stab} fail to give stability.  These metrics are possibly stable, but our methods are insufficient to prove it.

As the focus of this paper is linear stability, we will not directly consider the dynamical aspects of stability.  However, using the results of \cite{WilliamsWu2013-dynamical}, we have dynamical stability for each metric in Theorem \ref{thm:stab-all}.

\begin{thm}[\cite{WilliamsWu2013-dynamical}]\label{thm:dynamical-all}
Each metric from Theorem \ref{thm:stab-all} is dynamically stable in the context of \cite[Theorem 1.1]{WilliamsWu2013-dynamical} for Einstein metrics and of \cite[Theorem 1.2]{WilliamsWu2013-dynamical} for non-Einstein solitons.
\end{thm}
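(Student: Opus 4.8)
The plan is to read Theorem \ref{thm:dynamical-all} as a hypothesis-checking statement: the genuine analysis lives in \cite{WilliamsWu2013-dynamical}, and the job here is to confirm that every metric produced by Theorem \ref{thm:stab-all} falls within the scope of one of the two dynamical-stability theorems quoted. First I would isolate the single spectral fact supplied by Theorem \ref{thm:stab-all}: each listed metric is \emph{strictly} linearly stable, so the DeTurck-modified linearization $\opA$ of \eqref{eq:cnrf-lin} satisfies $(\opA h, h) \le -\epsilon \|h\|^2$ for some $\epsilon > 0$ on the relevant space of symmetric $2$-tensors. This negative spectral bound is precisely the input that the two companion theorems consume.

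Next I would partition the metrics by type. Every Einstein metric in the list---the solvable Einstein metrics with codimension-one nilradical, together with the quaternionic and Cayley hyperbolic families---is routed into \cite[Theorem 1.1]{WilliamsWu2013-dynamical}, while every genuinely non-Einstein algebraic soliton---the nilsolitons and the non-Einstein solvsolitons---is routed into \cite[Theorem 1.2]{WilliamsWu2013-dynamical}. For each metric I would then verify the standing structural hypotheses of the relevant theorem: that it is a left-invariant algebraic soliton \eqref{eq:alg-soliton} on a simply connected solvable Lie group (hence on a manifold diffeomorphic to $\mbb{R}^n$), expanding and non-gradient in the non-Einstein case. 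These hold by the very construction of the metrics in Theorem \ref{thm:stab-all}, so this step is essentially bookkeeping, and the conclusion of dynamical stability then follows directly from strict linear stability.

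The real obstacle---resolved inside \cite{WilliamsWu2013-dynamical} and only invoked here---is upgrading the $L^2$-type spectral bound to genuine dynamical stability on a \emph{non-compact} manifold. On $\mbb{R}^n$ the Lichnerowicz Laplacian $\Delta_L$ appearing in \eqref{eq:cnrf-lin} can carry essential spectrum accumulating at $0$, so strict negativity in a naive space does not by itself yield convergence. One must instead construct a scale of weighted function spaces with the interpolation properties demanded by Simonett's invariant-manifold theorem, show that $\opA$ generates an analytic semigroup on that scale, and confirm that the strict spectral gap persists there. The delicate point is choosing a weight compatible both with the homogeneous geometry and with the curvature-normalized flow \eqref{eq:cnrf}, so that Step 3 of the Guenther--Isenberg--Knopf program closes. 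Granting that framework, Theorem \ref{thm:dynamical-all} is then immediate from Theorem \ref{thm:stab-all}.
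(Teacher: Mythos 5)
Your reading is correct and matches the paper exactly: Theorem \ref{thm:dynamical-all} is stated with no proof in this paper beyond the citation, and its content is precisely that the strict linear stability certified by Theorem \ref{thm:stab-all} is the spectral input consumed by \cite[Theorems 1.1 and 1.2]{WilliamsWu2013-dynamical}, with the Einstein/non-Einstein routing and the structural hypotheses (algebraic solitons on simply connected solvable groups diffeomorphic to $\mbb{R}^n$) holding by construction. Your closing paragraph correctly identifies that the analytic heavy lifting --- the interpolation scale of tensor spaces and Simonett's theorem on a non-compact manifold --- lives entirely in the companion paper and is only invoked here.
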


This theorem adds to the understanding of long-time behavior of Ricci flow solutions.  Indeed, expanding homogeneous solitons are expected to act as singularity models for (at least some) Type-III Ricci flow singularities, and this is known to be true in some cases \cite{Lott2010, Knopf2009}.  Theorem \ref{thm:dynamical-all} exhibits many more examples of Ricci flow solutions that converge to expanding homogeneous solitons.

\section{Criteria for linear stability}\label{sec:lin-criteria}

In this section we find conditions that imply linear stability, and doing so requires us to understand the linear operator $\opA$ from \eqref{eq:cnrf-lin}.  In particular, we analyze the various components that make up the $L^2$ inner product $(\opA h,h)$, which include various curvature operators, Laplace operators, and Lie derivatives.  The main results of this section, Propositions \ref{prop:stab-conds} and \ref{prop:sec-stab}, give sufficient conditions for linear stability in the case of constant scalar curvature.  The first requires estimating the eigenvalues of $\Rm$, acting on symmetric 2-tensors, and the second requires sectional curvature bounds.
The results are also adapted to the case of algebraic solitons.

\subsection{Curvature operators}
  
Let $(\mcl{M},g)$ be a Riemannian manifold.  The Lichnerowicz Laplacian, acting on symmetric 2-tensors, has the form
\begin{equation}\label{eq:lich1}
-\Delta_L h = \nabla^*\nabla h - 2 \Ro h + \Ric \circ h + h \circ \Ric.
\end{equation}
Here, $\nabla^*\nabla h = -\tr_g \nabla^2 h = -\Delta h$ is the connection Laplacian on 2-tensors.  

The second term is the action of the Riemann tensor on symmetric 2-tensors.  In local coordinates, this is
\begin{equation}\label{eq:Ro-coords}
(\Ro h)_{ij} = R_{ipqj} h^{pq}.
\end{equation}
This operator satisfies $\Ro g = \Rc$, so $\langle \Ro g,g \rangle = \scal$.  When $g$ is $\lambda$-Einstein, $\Ro g = \lambda g$, so $\lambda$ is an eigenvalue of $\Ro$.  Additionally, when $g$ is $\lambda$-Einstein, we have $\tr (\Ro h) = \lambda \tr h$, so that it is actually an operator on trace-free symmetric $2$-tensors.  We will not need this fact, however.

The last two terms in \eqref{eq:lich1} are an abuse of notation for the action of the Ricci tensor on symmetric 2-tensors.  In local coordinates, this is
\begin{equation}\label{eq:Rico-coords}
(\Ric \circ h + h \circ \Ric)_{ij} = R_i^k h_{kj} + R_j^k h_{ki}. 
\end{equation}
We can combine these two curvature operators to form $\mfr{Ric}$, the \textit{Weitzenb\"ock curvature operator}, acting on symmetric 2-tensors\footnote{More generally, the Weitzenb\"ock curvature operator acts on $p$-tensors by
\[ \mfr{Ric}(T)(X_1,\dots,X_p) = \sum_{j=1}^n \sum_{k=1}^p \big( R(e_j,X_k) T \big) (X_1,\dots,e_j,\dots,X_p), \]
in an orthonormal frame $\{e_j\}$.  Here, the action of the curvature tensor $R$ on a tensor $T$ is
\[ \big(R(U,V) T\big) (X_1,\dots,X_p) = - \sum_{k=1}^p T\big( X_1,\dots,R(U,V) X_k, \dots, X_p \big). \]
One gets a Lichnerowicz Laplacian on $p$ tensors by $\nabla^* \nabla + \mfr{Ric}$.}:
\begin{align*}
\mfr{Ric}(h) &= -2 \Ro + \Ric \circ h + h \circ \Ric \\
\mfr{Ric}(h)_{ij} &= - 2 R_{ipqj} h^{pq} + R_i^k h_{kj} + R_j^k h_{ki}
\end{align*}
Note that when $g$ is $\lambda$-Einstein, this becomes $\mfr{Ric}(h) = 2(\lambda - \Ro) h$.  This operator allows us to write \eqref{eq:lich1} as
\[ -\Delta_L h = \nabla^* \nabla h + \mfr{Ric}(h). \]

An important formula involving the Lichnerowicz Laplacian is Koiso's Bochner formula \cite{Koiso1979}, which we adapt here in a slightly more general setting.  

\begin{prop}\label{prop:koiso}
On a Riemannian manifold $(\mcl{M},g)$, let $h$ be a symmetric 2-tensor with compact support and define a 3-tensor $T$ by
\[ T_{ijk} := \nabla_k h_{ij} - \nabla_i h_{jk}. \] 
Then
\begin{equation}\label{eq:koiso-gen}
\|\nabla h\|^2 = \half \|T\|^2 + \|\delta h\|^2 - \half (\mfr{Ric}(h),h).
\end{equation}
\end{prop}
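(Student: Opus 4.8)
The plan is to prove the identity by expanding $\tfrac12\|T\|^2$ directly in terms of covariant derivatives of $h$, and then to convert the resulting cross-terms into the Weitzenböck curvature operator $\mfr{Ric}(h)$ via integration by parts and the Ricci commutation identities. First I would write out the defining tensor $T_{ijk} = \nabla_k h_{ij} - \nabla_i h_{jk}$ and compute $\|T\|^2 = \sum T_{ijk} T^{ijk}$. Expanding the square produces three groups of terms: a term $\sum (\nabla_k h_{ij})^2 = \|\nabla h\|^2$, a term $\sum (\nabla_i h_{jk})^2$ which (after relabeling dummy indices and using the symmetry $h_{ij} = h_{ji}$) also equals $\|\nabla h\|^2$, and the cross-term $-2\sum (\nabla_k h_{ij})(\nabla_i h_{jk})$. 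Thus $\tfrac12\|T\|^2 = \|\nabla h\|^2 - \sum (\nabla_k h_{ij})(\nabla_i h_{jk})$, so the whole content of the formula is reorganizing that cross-term.

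Next I would handle the cross-term $C := \sum (\nabla_k h_{ij})(\nabla_i h_{jk})$ by integration by parts, which is legitimate since $h$ has compact support. Integrating by parts in the index $k$ (moving $\nabla_k$ off the first factor) yields $\int C = -\int \sum h_{ij}\, \nabla_k \nabla_i h_{jk}$ after discarding the boundary/divergence term. Now I would commute the two covariant derivatives $\nabla_k \nabla_i$ using the Ricci identity, which introduces exactly the curvature terms: $\nabla_k\nabla_i h_{jk} = \nabla_i\nabla_k h_{jk} + (\text{curvature acting on } h)$. The $\nabla_i \nabla_k h_{jk}$ piece reassembles, after another integration by parts, into a divergence-squared contribution matching $\|\delta h\|^2$ (here $\delta$ denotes the divergence of $h$, so $(\delta h)_j = -\nabla^k h_{jk}$ up to sign conventions), while the curvature piece, after contracting the Riemann and Ricci tensors against $h$ in both free indices, assembles precisely into $\tfrac12(\mfr{Ric}(h),h)$ using the definitions in \eqref{eq:Ro-coords} and \eqref{eq:Rico-coords}. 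Collecting the three contributions gives $\tfrac12\|T\|^2 = \|\nabla h\|^2 - \|\delta h\|^2 + \tfrac12(\mfr{Ric}(h),h)$, which rearranges to \eqref{eq:koiso-gen}.

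The main obstacle I anticipate is the bookkeeping in the curvature step: when the commutator $[\nabla_k,\nabla_i]$ is applied to the two lower indices $j$ and $k$ of $h_{jk}$, one generates two Riemann-curvature contractions, and one must carefully use the symmetries of $R$ (pair symmetry and the first Bianchi identity) together with the symmetry of $h$ to show that these combine into the correct linear combination $-2\Ro h + \Ric\circ h + h\circ \Ric$ rather than some other tensor. A sign and index-placement error here is the likeliest pitfall, so I would fix an orthonormal frame, track each contraction explicitly, and verify the final coefficients against the known $\lambda$-Einstein reduction $\mfr{Ric}(h) = 2(\lambda - \Ro)h$ noted in the excerpt as a consistency check. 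The integration-by-parts steps themselves are routine given compact support, so the entire difficulty is concentrated in correctly identifying the curvature contractions.
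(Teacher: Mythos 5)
Your proposal is correct and follows essentially the same route as the paper's own sketch: expand $\|T\|^2$ to get $2\|\nabla h\|^2$ minus twice the cross-term, then integrate by parts and commute covariant derivatives to convert that cross-term into $-\|\delta h\|^2 + \half(\mfr{Ric}(h),h)$. Your final identity $\half\|T\|^2 = \|\nabla h\|^2 - \|\delta h\|^2 + \half(\mfr{Ric}(h),h)$ agrees with the paper, and your flagged concern about the curvature bookkeeping is exactly where the only real care is needed.
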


We sketch the proof.  First, it is easy to see that
\[ \|T\|^2 = 2 \|\nabla h \|^2 - 2\int \nabla_k h_{ij} \nabla_\ell h_{pq} g^{i\ell} g^{jp} g^{kq}. \]
Integrate by parts and commute covariant derivatives to obtain
\begin{align*}
- 2\int \nabla_k h_{ij} \nabla_\ell h_{pq} g^{i\ell} g^{jp} g^{kq}
&= - 2\|\delta h\|^2  + 2 \int R_i^j h_j^k h_k^i - 2 \int R_{ijkl} h^{i\ell} h^{jk} \\
&= - 2\|\delta h\|^2  + (\mfr{Ric}(h),h).
\end{align*}
Now \eqref{eq:koiso-gen} follows from combining these two equations.

\subsection{Laplacians}

We saw ``the'' Lichnerowicz Laplacian above,
\[ -\Delta_L = \nabla^*\nabla + \mfr{Ric}, \]
 but there is actually a family of Laplacians depending on a parameter $c>0$, 
\[ -\Delta_{L,c} := \nabla^* \nabla + c \, \mfr{Ric}, \]
As a special case, let us consider the \textit{Codazzi Laplacian}:
\[ -\Delta_C := -\Delta_{L,1/2} = \nabla^*\nabla + \half \mfr{Ric}. \]
The name arises from the fact that the kernel of this operator, acting on symmetric 2-tensors, consists of Codazzi tensors.  By \eqref{eq:koiso-gen}, we have
\begin{equation}\label{eq:koiso-codazzi}
(-\Delta_C h,h) = -\half\|T\|^2 - \|\delta h\|^2, 
\end{equation}
so this operator is non-positive.

\subsection{Lie Derivatives}

Consider a Ricci soliton $(\mcl{M},g,\lambda,X)$ satisfying \eqref{eq:ricci-soliton}.  Using the expression of the Lie derivative in terms of covariant derivatives, we have
\[ \half (\mcl{L}_X g)(Y,Z) 
= \half g(\nabla_Y X,Z) + \half g(Y,\nabla_Z X)
= g( DY,Z ), \]
where 
\[ D := \half(\nabla X + (\nabla X)^* ). \]
This gives an equivalent soliton equation on the endomorphism level:
\[ \Ric = \lambda \id_{T\mcl{M}} + D. \]

Express the Lie derivative of a compactly supported symmetric 2-tensor $h$ as 
\[ (\mcl{L}_X h)(Y,Z) = (\nabla_X h)(Y,Z) + \underbrace{h(\nabla_Y X,Z) + h(Y,\nabla_Z X)}_{\Xi_X(h)(Y,Z):=}. \]
Writing $\langle \nabla_X h,h \rangle = \half \nabla_X |h|^2$ and integrating by parts gives
\begin{equation}\label{eq:div-ibp}
\int_M \langle \nabla_X h,h \rangle \, d\mu 
= \half \int \nabla_X |h|^2 \, d\mu
= \half \int \langle \nabla |h|^2, X \rangle \, d\mu
= -\half \int |h|^2 \mrm{div}(X) \, d\mu.
\end{equation}
If $\mrm{div}(X)$ is constant (which happens when $g$ has constant scalar curvature), this becomes
\[ (\nabla_X h,h) = -\half \mrm{div}(X) \|h\|^2. \]

Next, consider $\Xi$ and $h$ on the endomorphism level, so that 
\[ \Xi_X(h) = h \circ \nabla X + (\nabla X)^* \circ h. \]
Using this and the endomorphism inner product $\langle S,T \rangle = \tr(S^*T)$, we have
\begin{align*}
\langle\Xi_X(h),h \rangle
&= \tr(h \circ \nabla X \circ h + (\nabla X)^* \circ h \circ h) \\
&= \tr((\nabla X + (\nabla X)^*) \circ h \circ h) \\
&= 2 \langle D \circ h,h \rangle \\
&= 2 \langle (\Ric - \lambda \id) \circ h,h \rangle \\
&= 2 \langle \Ric \circ h,h \rangle - 2 \lambda |h|^2. 
\end{align*}
Now, combining this with \eqref{eq:div-ibp} gives
\begin{equation}\label{eq:lie-simp}
(\mcl{L}_X h,h) = - \half \mrm{div}(X) \|h\|^2 + 2 (\Ric \circ h,h) - 2\lambda \|h\|^2.
\end{equation}

\subsection{Rewriting the linearization}

We now use the various formulas above to rewrite the linearized operator $\opA$, and describe some sufficient conditions for linear stability.  Define the quadratic form
\[ Q(h) := (\Ro h + \Ric \circ h,h). \]

\begin{prop}
Let $(\mcl{M},g,\lambda,X)$ be a Ricci soliton with constant scalar curvature.  The following condition implies that $g$ is strictly linearly stable:
\[ Q(h) < \half \mrm{div}(X) \|h\|^2. \]
When $g$ is $\lambda$-Einstein, this condition is
\[ (\Ro h,h) < -\lambda \|h\|^2. \]
\end{prop}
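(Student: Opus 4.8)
The plan is to compute the $L^2$ quadratic form $(\opA h,h)$ directly, by assembling the three pieces already prepared—the Lichnerowicz Laplacian rewritten through the Codazzi operator, the zeroth-order term $2\lambda\|h\|^2$, and the Lie-derivative identity \eqref{eq:lie-simp}—and then discarding the one contribution whose sign is favorable.

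First I would write $(\opA h,h)=(\Delta_L h,h)+2\lambda\|h\|^2+(\mcl{L}_X h,h)$ and rewrite the Laplacian. Since $-\Delta_L=\nabla^*\nabla+\mfr{Ric}$ and $-\Delta_C=\nabla^*\nabla+\half\mfr{Ric}$, subtraction gives $\Delta_L=\Delta_C-\half\mfr{Ric}$, so $(\Delta_L h,h)=(\Delta_C h,h)-\half(\mfr{Ric}(h),h)$. The Weitzenb\"ock term I would expand via $\mfr{Ric}(h)=-2\Ro h+\Ric\circ h+h\circ\Ric$ together with the identity $(\Ric\circ h,h)=(h\circ\Ric,h)$—both equal $\tr(h\,\Ric\,h)$ because $\Ric$ and $h$ are self-adjoint and the trace is cyclic—which collapses $(\mfr{Ric}(h),h)$ to $-2(\Ro h,h)+2(\Ric\circ h,h)$. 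Hence $-\half(\mfr{Ric}(h),h)=(\Ro h,h)-(\Ric\circ h,h)$.

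Next I would substitute \eqref{eq:lie-simp} and collect. The two copies of $2\lambda\|h\|^2$ cancel, and the $-(\Ric\circ h,h)$ from the Laplacian combines with the $+2(\Ric\circ h,h)$ from the Lie derivative to leave precisely $+(\Ric\circ h,h)$; what remains is the identity
\[ (\opA h,h)=(\Delta_C h,h)+Q(h)-\half\,\mrm{div}(X)\,\|h\|^2. \]
By \eqref{eq:koiso-codazzi} the Codazzi term is non-positive, $(\Delta_C h,h)=-\half\|T\|^2-\|\delta h\|^2\le 0$, so it may be dropped to give $(\opA h,h)\le Q(h)-\half\mrm{div}(X)\|h\|^2$. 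The hypothesis then forces $(\opA h,h)<0$ and hence strict linear stability. For the Einstein specialization I would use that a Killing field has $\mrm{div}(X)=0$ and that $\Ric=\lambda\id$ gives $(\Ric\circ h,h)=\lambda\|h\|^2$, so $Q(h)=(\Ro h,h)+\lambda\|h\|^2$ and the condition reduces to $(\Ro h,h)<-\lambda\|h\|^2$.

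The step I expect to be the main obstacle is the sign bookkeeping in the assembly—in particular confirming the favorable sign of the Codazzi term and tracking the cancellations—rather than any hard estimate; conceptually all that happens is that the gradient-type contribution $-\half\|T\|^2-\|\delta h\|^2$ is harmless. A secondary subtlety is the word \emph{strict}: to obtain a genuine $\epsilon>0$ in the definition rather than a merely pointwise negative form, one needs the hypothesized inequality to hold with a uniform gap, which in the homogeneous setting reduces to an algebraic eigenvalue condition on the finite-dimensional space of tensors.
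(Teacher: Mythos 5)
Your argument is exactly the paper's proof: you rewrite $\Delta_L$ through the Codazzi operator, invoke Koiso's Bochner formula and the Lie-derivative identity \eqref{eq:lie-simp}, and arrive at precisely the paper's identity \eqref{eq:A-exp-1}, namely $(\opA h,h) = -\half\|T\|^2 - \|\delta h\|^2 + Q(h) - \half\mrm{div}(X)\|h\|^2$, before discarding the non-positive gradient terms. The only difference is cosmetic: your bookkeeping $(\Delta_C h,h) = -\half\|T\|^2 - \|\delta h\|^2$ is the sign-consistent version of the paper's \eqref{eq:koiso-codazzi} (which, as displayed, attaches this value to $(-\Delta_C h,h)$), and your closing remark that strictness reduces to a finite-dimensional eigenvalue gap in the homogeneous case matches the paper's own observation that only pointwise estimates are needed there.
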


\begin{proof}
The linearization is
\begin{align*}
\opA h 
&= \Delta_L h + 2\lambda h + \mcl{L}_X h \\
&= -\nabla^* \nabla h - \mfr{Ric}(h) + 2\lambda h + \mcl{L}_X h \\
&= -\Delta_C h - \half \mfr{Ric}(h) + 2\lambda h + \mcl{L}_X h.
\end{align*}
Using \eqref{eq:koiso-codazzi} and \eqref{eq:lie-simp}, we have
\begin{equation}\label{eq:A-exp-1}
\begin{aligned}
(\opA h,h) 
&= -(\Delta_C h,h) - \half (\mfr{Ric}(h),h) + 2 \lambda \|h\|^2 \\
&\quad  - \half \mrm{div}(X) \|h\|^2 + 2 (\Ric \circ h,h ) - 2\lambda \|h\|^2 \\
&= -\half\|T\|^2 - \|\delta h\|^2 + (\Ro h,h) + (\Ric \circ h,h) - \half \mrm{div}(X) \|h\|^2.
\end{aligned}
\end{equation}
The curvature terms are exactly $Q(h)$, and since we want $(\opA h,h) < 0$, the result follows.
\end{proof}

\begin{rem}
In the Einstein case, the condition for linear stability is the same as the stability condition considered by Koiso and others; see \cite{Koiso1979} or \cite{Besse2008}.
\end{rem}

Recall that a homogeneous metric has constant scalar curvature.  Furthermore, in the case of an algebraic soliton on a simply connected solvable Lie group $\mcl{S}$ with Lie algebra $\mfr{s}$, we have $\Ric = \lambda \, \id + D$, where $D \in \Der(\mfr{s})$.  Tracing this and comparing with the traced version of \eqref{eq:ricci-soliton} yields $\mrm{div}(X) = \tr D$.  We also note that in the homogeneous case, we only need pointwise estimates.  Therefore, we don't need the (integrated) $L^2$ inner product, only a single inner product, so it suffices to consider $Q(h) = \langle \Ro h + \Ric \circ h, h \rangle$.

\begin{cor}\label{cor:alg-stab-cond}
Let $(\mcl{S},g,\lambda,D)$ be an algebraic Ricci soliton.  The following condition implies that $g$ is strictly linearly stable:
\[ Q(h) < \half \tr D |h|^2. \]
\end{cor}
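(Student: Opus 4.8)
The plan is to deduce the corollary directly from the preceding Proposition by translating its hypotheses into the algebraic-soliton setting. First I would record the two structural facts needed. Since $(\mcl{S},g,\lambda,D)$ is an algebraic soliton, it is a genuine Ricci soliton (by Lauret's result quoted in the introduction), and because $g$ is a left-invariant metric on the solvable Lie group $\mcl{S}$ it is homogeneous, hence of constant scalar curvature. Thus the hypotheses of Proposition \ref{prop:stab-conds} are met, and it suffices to produce the inequality $(\Ro h + \Ric \circ h, h) < \half \mrm{div}(X)\,\|h\|^2$ and read off the conclusion.

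Next I would identify $\mrm{div}(X)$ with $\tr D$. Tracing the endomorphism soliton equation $\Ric = \lambda\,\id + D$ gives $\scal = n\lambda + \tr D$, while tracing the tensorial equation \eqref{eq:ricci-soliton} and using $\tr_g(\mcl{L}_X g) = 2\,\mrm{div}(X)$ gives $\scal = n\lambda + \mrm{div}(X)$. Comparing the two yields $\mrm{div}(X) = \tr D$, so the target inequality is precisely $Q(h) < \half \tr D\,\|h\|^2$, matching the pointwise hypothesis up to the distinction between the pointwise norm $|h|$ and the $L^2$ norm $\|h\|$.

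The remaining step, and the only one requiring care, is passing from the stated pointwise condition $Q(h) < \half \tr D\,|h|^2$ to the integrated condition needed in the Proposition. Here I would invoke homogeneity. At a single point the space of symmetric $2$-tensors is finite-dimensional, and the hypothesis asserts that the symmetric operator $\Ro + \Ric\circ(\cdot) - \half \tr D\,\id$ is negative definite there; by compactness of the unit sphere this gives a uniform gap $\epsilon > 0$ with $\langle \Ro h + \Ric\circ h, h\rangle \leq (\half \tr D - \epsilon)\,|h|^2$ for every $h$. Since the isometry group acts transitively, it carries this operator at one point to the operator at any other point, so its eigenvalues are independent of the base point and the same $\epsilon$ works everywhere. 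Integrating over $\mcl{M}$ then yields $(\Ro h + \Ric\circ h, h) \leq \half \tr D\,\|h\|^2 - \epsilon\|h\|^2 < \half \mrm{div}(X)\,\|h\|^2$. Feeding this into the expansion \eqref{eq:A-exp-1}, together with the non-positivity of $-\half\|T\|^2 - \|\delta h\|^2$, gives $(\opA h,h) < 0$ with a uniform gap, which is exactly strict linear stability. The main conceptual obstacle is simply the legitimacy of this pointwise-to-integrated reduction; once homogeneity supplies a uniform eigenvalue bound, the rest is a direct substitution into Proposition \ref{prop:stab-conds}.
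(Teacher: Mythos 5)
Your proposal is correct and follows essentially the same route as the paper: the paper's justification is the short paragraph preceding the corollary, which notes that homogeneity gives constant scalar curvature, that tracing $\Ric = \lambda\,\id + D$ against the traced soliton equation yields $\mrm{div}(X) = \tr D$, and that for a homogeneous metric pointwise estimates suffice, after which Proposition \ref{prop:stab-conds} applies. Your explicit justification of the pointwise-to-integrated passage (uniform spectral gap via compactness of the unit sphere and transitivity of the isometry group) is a careful spelling-out of what the paper leaves implicit, not a different argument.
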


\begin{rem}
In estimating $\opA$, one may not want to use the Codazzi Laplacian, in which case we use \eqref{eq:koiso-gen} to get
\begin{equation}\label{eq:A-exp-2}
\begin{aligned}
(\opA h,h) 
&= -(\nabla^*\nabla h,h)  - (\mfr{Ric}(h),h) + 2 \lambda \|h\|^2 \\
&\quad   - \half \mrm{div}(X) \|h\|^2 + 2 (\Ric \circ h,h ) - 2\lambda \|h\|^2 \\
&= -\|\nabla h\|^2 + 2(\Ro h,h) - \half \mrm{div}(X) \|h\|^2.
\end{aligned}
\end{equation}
This gives another condition for stability, $(\Ro h,h) < \frac{1}{4} \mrm{div}(X) \|h\|^2$.  It has the advantage of only requiring estimates of the Riemann curvature, but the estimates must be better than those for $Q$.
\end{rem}

We also describe conditions for stability when the sectional curvature is bounded.

\begin{prop}
Let $(\mcl{M},g,\lambda,X)$ be a Ricci soliton with constant scalar curvature, and suppose that $\sec \leq K \leq 0$ for some constant $K$.  A sufficient condition for $g$ to be strictly linearly stable is that
\[ (n-2) K < \half \mrm{div}(X), \]
or $(n-2) K < \half \tr D$ in the case of an algebraic soliton.
\end{prop}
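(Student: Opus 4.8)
The plan is to deduce this from the previous proposition (Proposition \ref{prop:stab-conds}): since the hypotheses include constant scalar curvature, it suffices to verify the inequality $Q(h) < \half \mrm{div}(X)|h|^2$ for the quadratic form $Q(h) = \langle \Ro h + \Ric\circ h, h\rangle$, and then the claimed condition on $\sec$ should follow by bounding $Q(h)$ from above purely in terms of the sectional curvature bound. In the algebraic soliton case one substitutes $\mrm{div}(X)=\tr D$ at the very end, exactly as in Corollary \ref{cor:alg-stab-cond}.

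First I would work at a fixed point and diagonalize $h$, choosing an orthonormal basis $\{e_i\}$ of eigenvectors with $h e_i = \mu_i e_i$. In this basis $\langle \Ro h, h\rangle = \sum_{i,p} R_{ippi}\mu_i\mu_p$, and since $R_{ippi} = \sec(e_i,e_p)$ for $i\ne p$ while $R_{iiii}=0$, this becomes $\sum_{i\ne p}\sec(e_i,e_p)\mu_i\mu_p$. For the second term I would use $\langle\Ric\circ h,h\rangle = \tr(\Ric\circ h^2) = \sum_i \Ric(e_i,e_i)\mu_i^2$ — here only the diagonal entries of $\Ric$ enter, because $h^2$ is diagonal in this basis even though $\Ric$ need not be — and then rewrite $\Ric(e_i,e_i)=\sum_{p\ne i}\sec(e_i,e_p)$. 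Adding the two contributions and symmetrizing over the unordered pair $\{i,p\}$ (using $\sec(e_i,e_p)=\sec(e_p,e_i)$), I expect the square terms and cross terms to assemble into the clean identity
\[ Q(h) = \sum_{i<p} \sec(e_i,e_p)\,(\mu_i+\mu_p)^2. \]

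With this identity in hand the estimate is immediate. Since every $\sec(e_i,e_p)\le K\le 0$ and each $(\mu_i+\mu_p)^2\ge 0$, I would get $Q(h)\le K\sum_{i<p}(\mu_i+\mu_p)^2$, and a short computation gives $\sum_{i<p}(\mu_i+\mu_p)^2 = (n-2)|h|^2 + (\tr h)^2$ (using $\sum\mu_i^2=|h|^2$ and $\sum\mu_i=\tr h$). Because $K\le 0$ and $(\tr h)^2\ge 0$, the trace term only helps, so $Q(h)\le (n-2)K|h|^2$. The hypothesis $(n-2)K < \half\mrm{div}(X)$ then yields $Q(h) < \half\mrm{div}(X)|h|^2$ for all $h\ne 0$, which is precisely what Proposition \ref{prop:stab-conds} requires. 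The only real subtlety is the bookkeeping leading to the squared-sum identity, in particular the observation that only the diagonal of $\Ric$ contributes; the two uses of the sign condition $K\le 0$ — once to bound each sectional curvature and once to discard the nonnegative $(\tr h)^2$ term — are exactly what make the clean bound possible, so the assumption $K\le 0$ is essential rather than merely convenient.
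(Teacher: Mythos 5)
Your proposal is correct and follows essentially the same route as the paper: diagonalize $h$, rewrite $Q(h)$ as $\tfrac12\sum_{i\ne j}\sec(e_i,e_j)(h_{ii}+h_{jj})^2$, bound this by $(n-2)K\|h\|^2+K\|\tr h\|^2$, and discard the trace term using $K\le 0$ before feeding the estimate into the expansion \eqref{eq:A-exp-1} underlying Proposition \ref{prop:stab-conds}. The only cosmetic difference is that you invoke that proposition as a black box while the paper substitutes directly into \eqref{eq:A-exp-1}; the content is identical.
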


\begin{proof}
Select an orthonormal basis that diagonalizes $h$ and consider the (unintegrated) operator $Q(h)$.
\begin{align*}
\langle \Ro h + \Ric \circ h,h \rangle 
&= \sum_{ijk\ell} R_{ijk\ell} h^{i\ell} h^{jk} + \sum_{ijk} R_i^j h_j^k h_k^i \\
&= \sum_{ij} R_{ijji} h_{ii} h_{jj} + \sum_i R_{ii} h_{ii}^2 \\
&= \sum_{i \ne j} \sec(i,j) h_{ii} h_{jj} + \sum_{i \ne j} \sec(i,j) h_{ii}^2 \\
&= \half \sum_{i \ne j} \sec(i,j) 2 h_{ii} h_{jj} + \half \sum_{i \ne j} \sec(i,j) h_{ii}^2 + \half \sum_{i \ne j} \sec(i,j) h_{jj}^2 \\ 
&= \half \sum_{i \ne j} \sec(i,j) (h_{ii} + h_{jj})^2 \\
&\leq \half K \sum_{i \ne j} (h_{ii} + h_{jj})^2 \\ 
&= \half K \sum_{ij} (h_{ii} + h_{jj})^2 - \half K \sum_{i} 4 h_{ii}^2 \\
&= \half K \sum_{ij} (h_{ii}^2 + 2 h_{ii} h_{jj} + h_{jj}^2) - 2K \sum_{i} h_{ii}^2 \\
&= (n-2) K |h|^2 + K |\tr h|^2
\end{align*}
Now, we use \eqref{eq:A-exp-1} above.
\begin{align*}
(\opA h,h) 
&= -\half\|T\|^2 - \|\delta h\|^2 + (\Ro h,h) + (\Ric \circ h,h) - \half \mrm{div}(X) \|h\|^2 \\
&= -\half\|T\|^2 - \|\delta h\|^2 + (n-2) K \|h\|^2 + K \|\tr h\|^2 - \half \mrm{div}(X) \|h\|^2 \\
&\leq (n-2) K \|h\|^2 - \half \mrm{div}(X) \|h\|^2
\end{align*}
We want this to be less than zero, so the result follows.  
\end{proof}

\begin{rem}
If we take $K=0$, then the condition reduces to $\mrm{div}(X) > 0$ or $\tr D > 0$.  Also, we can allow for $K>0$ by using the inequality $|\tr h|^2 \leq n |h|^2$.  The condition then becomes 
\[ (n-1) K < \smfrac{1}{4} \mrm{div}(X), \]
but it is not clear how useful this is.
\end{rem}

\section{Nilsolitons and solvsoliton extensions}\label{sec:extensions}

Simply connected solvable Lie groups provide a rich class of non-compact spaces that often admit Ricci solitons and Einstein metrics.  Indeed, as mentioned above, all known examples of non-compact homogeneous expanding Ricci solitons can be realized as left-invariant metrics on solvable Lie groups.  Building on the work of Heber \cite{Heber1998}, Lauret has exploited the algebraic structure of these spaces to prove many powerful results regarding these metrics.  Before we state these results, recall that, given a Lie algebra $\mfr{n}$ with $\mfr{a} \subset \Der(\mfr{n})$, the semi-direct product $\mfr{n} \rtimes \mfr{a}$ has the bracket structure
\begin{equation}\label{eq:semi-direct}
[X,Y] = 0, \qquad 
[A,B] = 0, \qquad 
[A,X] = A(X), 
\end{equation}
for all $A,B \in \mfr{a}$, $X,Y \in \mfr{n}$.  Also recall that the \textit{mean curvature vector} of a metric Lie algebra $\mfr{n} \rtimes \mfr{a}$ is $H \in \mfr{a}$ such that $\langle H,A \rangle = \tr \ad_A$ for all $A \in \mfr{a}$.  

\begin{thm}[The structure and uniqueness of solvsolitons \cite{Lauret2011-sol}]\label{thm:lauret} 
\begin{itemize} 
\item[] \hphantom{-}
\item[(a)] Let $(\mfr{n}, \langle \cdot,\cdot \rangle_\mfr{n})$ be a nilsoliton with Ricci operator $\Ric_\mfr{n} = \lambda \, \id_\mfr{n} + D_\mfr{n}$, $\lambda < 0$ and $D_\mfr{n} \in \Der(\mfr{n})$.  Consider an abelian subalgebra $\mfr{a}$ of $\langle \cdot,\cdot \rangle_\mfr{n}$-symmetric derivations of $\mfr{n}$.  Then the simply connected solvmanifold $\mcl{S}$ with Lie algebra $\mfr{s} := \mfr{n} \rtimes \mfr{a}$ has an inner product $\langle \cdot,\cdot \rangle$ that is a solvsoliton with $\Ric = \lambda \, \id + D$.  This inner product satisfies
\[ \langle \cdot,\cdot \rangle|_{\mfr{n} \times \mfr{n}} = \langle \cdot,\cdot \rangle_\mfr{n}, \quad
 \langle A,B \rangle = -\smfrac{1}{\lambda} \tr (AB) \text{ for all } A,B \in \mfr{a}, \quad 
 \mfr{n} \perp \mfr{a}, \] 
and the derivation $D \in \Der(\mfr{s})$ satisfies
\[ D|_{\mfr{n}} = D_\mfr{n} - \ad_H|_{\mfr{n}}, \quad 
   D|_{\mfr{a}} = 0, \]
where $H$ is the mean curvature vector of $S$.  Furthermore, the inner product is Einstein if and only if $D_\mfr{n} \in \mfr{a}$, or equivalently, $D_\mfr{n} = \ad_H|_\mfr{n}$.

\item[(b)] All solvsolitons are of the form described in (a), with $\mfr{n}=[\mfr{s},\mfr{s}]$.

\item[(c)] Let $\mcl{S}$ and $\mcl{S}'$ be two solvsolitons which are isomorphic as Lie groups.  Then $\mcl{S}$ is isometric to $\mcl{S}'$ up to scaling.
\end{itemize}
\end{thm}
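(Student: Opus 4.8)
The plan is to treat the three assertions separately, since they require genuinely different techniques. For part (a) I would proceed by direct verification using the standard formula for the Ricci operator of a metric solvable Lie algebra. Writing $\mfr{s} = \mfr{n} \oplus \mfr{a}$ orthogonally, with $\mfr{a}$ acting on $\mfr{n}$ by symmetric derivations, the Ricci operator decomposes into blocks, and the off-diagonal ($\mfr{n}$--$\mfr{a}$) block vanishes precisely because the derivations are symmetric and $\mfr{a}$ is abelian. On $\mfr{n}$ one obtains a formula of the form $\Ric_\mfr{s}|_\mfr{n} = \Ric_\mfr{n} - S(\ad_H)|_\mfr{n}$, where $S(\cdot)$ denotes the symmetric part and $H \in \mfr{a}$ is the mean curvature vector; since $H$ acts by a symmetric derivation, $\ad_H|_\mfr{n}$ is already symmetric, so this equals $\Ric_\mfr{n} - \ad_H|_\mfr{n} = \lambda\,\id_\mfr{n} + (D_\mfr{n} - \ad_H|_\mfr{n})$, matching the claimed $D$. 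On $\mfr{a}$ the metric $\langle A,B\rangle = -\frac{1}{\lambda}\tr(AB)$ is engineered so that $\Ric_\mfr{s}|_\mfr{a} = \lambda\,\id_\mfr{a}$, forcing $D|_\mfr{a} = 0$. The Einstein case $D = 0$ is then equivalent to $D_\mfr{n} = \ad_H|_\mfr{n}$, i.e. $D_\mfr{n} \in \mfr{a}$. The only mildly delicate point here is verifying the precise normalizing constant on $\mfr{a}$; everything else is bookkeeping with structure constants.

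Part (b) is the structural heart of the theorem and the main obstacle. Given an arbitrary solvsoliton $(\mfr{s}, \langle\cdot,\cdot\rangle)$, I would first establish that it is \emph{standard} in the sense of Heber: that the orthogonal complement $\mfr{a} := \mfr{n}^\perp$ of the nilradical $\mfr{n}$ is abelian and acts on $\mfr{n}$ by symmetric derivations. This is the crux and cannot be done by elementary computation; the natural route is the variational/moment-map framework, characterizing solitons as distinguished critical points of the scalar curvature functional on the variety of Lie-bracket structures and exploiting the gradient-flow structure to force the algebraic rigidity. Once standardness is known, one identifies $\mfr{n} = [\mfr{s},\mfr{s}]$ with the nilradical, and restricting the soliton equation to $\mfr{n}$ (reversing the block computation of part (a)) shows that $\langle\cdot,\cdot\rangle|_\mfr{n}$ is itself a nilsoliton. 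Thus every solvsoliton is obtained from the construction in (a).

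For the uniqueness statement (c), I would reduce to the uniqueness of nilsolitons. The key input is that on a fixed nilpotent Lie group a nilsoliton metric, when it exists, is unique up to isometry and scaling --- a fact that follows from the GIT/moment-map description of nilsolitons as the minimal-norm points in their $\GL$-orbit of bracket structures. By part (b) a solvsoliton is determined by the nilsoliton on its nilradical together with the abelian algebra $\mfr{a}$ of symmetric derivations and the prescribed metric on $\mfr{a}$; since both pieces are rigid, two solvsolitons on isomorphic solvable groups must agree up to the scaling freedom inherited from the nilsoliton. The main obstacle throughout is part (b): the block computations of (a) are reversible once standardness holds, but proving standardness genuinely requires the deeper variational machinery, and it is this step that makes the theorem nontrivial.
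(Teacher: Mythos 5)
This theorem is not proved in the paper at all: it is imported verbatim from Lauret \cite{Lauret2011-sol} (building on Heber \cite{Heber1998} and on \cite{Lauret2010-std}), and the authors only \emph{use} it. So there is no in-paper argument to compare against; what can be said is whether your outline matches Lauret's actual strategy and whether it would close. On the first count you are essentially right: part (a) really is a block computation with the standard Ricci formula for a metric solvable Lie algebra (the same formulas the paper records in \eqref{eq:ric-s}), with the metric on $\mfr{a}$ normalized exactly so that $\Ric|_{\mfr{a}} = \lambda\,\id_{\mfr{a}}$; part (b) does hinge on a standardness-type structural result proved with the moment-map/stratification machinery on the variety of Lie brackets; and part (c) does reduce to the uniqueness of nilsolitons up to isometry and scaling, which comes from the GIT characterization of nilsolitons as minimal vectors in their $\GL_n$-orbit.

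As a proof, however, the proposal has real gaps beyond "deferred hard steps.'' In (a) you never verify that the endomorphism $D$ defined by $D|_{\mfr{n}} = D_\mfr{n} - \ad_H|_{\mfr{n}}$, $D|_{\mfr{a}} = 0$ is actually a derivation of $\mfr{s}$ --- this requires $D_\mfr{n} - \ad_H|_{\mfr{n}}$ to commute with every $\ad_A|_{\mfr{n}}$, $A \in \mfr{a}$, which in turn uses the nontrivial fact that symmetric derivations of a nilsoliton commute with $\Ric_\mfr{n}$; nor do you address why the algebraic soliton equation $\Ric = \lambda\,\id + D$ yields a genuine Ricci soliton (the derivation must be realized by the Lie derivative of a vector field). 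In (b) you identify $[\mfr{s},\mfr{s}]$ with the nilradical, but for a general solvable Lie algebra these differ, and part of the structural work is showing the construction can be arranged with the stated $\mfr{n}$. Most importantly, the two inputs you label as "the key input'' --- standardness of solvsolitons and uniqueness of nilsolitons --- \emph{are} the theorem; naming the moment-map framework does not supply them. So the proposal is a correct roadmap of Lauret's proof but not a proof: everything that makes the statement nontrivial is asserted rather than established.
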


This essentially says that nilsolitons are the nilpotent parts of solvsolitons.  In particular, given a nilsoliton metric on a nilpotent Lie algebra $\mfr{n}$, it is always possible to construct a one-dimensional extension of $\mfr{n}$ that admits an Einstein metric.  We can form the solvable Lie algebra $\mfr{s} := \mfr{n} \rtimes \R D$, where $D$ is the nilsoliton derivation, and its Einstein metric is the one described in Part (a) of the Theorem.

Our goal in this section is to relate the linear stability of a left-invariant $\lambda$-Einstein metric on a solvable Lie group $\mcl{S}$ with that of the corresponding nilsoliton in the case of a codimension-one nilradical $\mcl{N}$.  Namely, we suppose that the Einstein metric satisfies the condition of Proposition \ref{prop:stab-conds}: $\langle \Ro h, h\rangle < -\lambda |h|^2$.  Then we then use this to estimate the spectrum of the operator $Q$ on the nilradical.  This comes down to comparing the eigenvalues of the soliton derivation to its trace.  Note again that all estimates are pointwise as the fixed-point metrics are homogeneous.  Also note that the Einstein metric will be a fixed point of the flow
\[ \partial_t g = -2 \Rc + 2 \lambda g \]
for metrics on $\mcl{S}$, and the nilsoliton is a fixed point of the flow for metrics on $\mcl{N}$:
\[ \partial_t g = -2 \Rc + 2 \lambda g + \mcl{L}_{X_\mfr{n}} g. \]

In terms of notation, geometric quantities on $\mfr{s}$ will not have subscripts, but those on $\mfr{n}$ or $\mfr{a}$ will.

\subsection{Curvature}\label{subsec:solv-curv}

Suppose that $\dim \mfr{n} = n$ and $\dim \mfr{a} = m$, and let 
\[ \{X_1,\dots,X_n,A_1,\dots,A_m\} \]
be an orthonormal basis of $\mfr{s} = \mfr{n} \rtimes \mfr{a}$ and let $U,V \in \mfr{s}$, $A,B,C,D \in \mfr{a}$, and $W,X,Y,Z \in \mfr{n}$.  In this section, we will use Roman indices for elements of $\mfr{n}$, Greek indices for elements of $\mfr{a}$.  

Recall that the covariant derivative corresponding to a left-invariant metric is given by
\[ 2 \nabla_U V = \ad_U V - \ad_U^* V - \ad_V^* U = \ad_U V + 2 \sigma(U,V), \]
where $\sigma(U,V) := -\half(\ad_U^* V + \ad_V^* U)$, see \cite{Besse2008,CheegerEbin2008}.  Write $\mrm{pr}_\mfr{a} \sigma(X,Y)$ for the orthogonal projection of $\sigma(X,Y)$ onto $\mfr{a}$.  Using that $\ad_A = \ad_A^*$, $\ad_A^*|_\mfr{a} = 0$, and $\ad_X^*|_\mfr{a}=0$, it is easy to see that
\begin{equation}\label{eq:nabla-s}
\begin{aligned}
\nabla_A B &= 0 \\
\nabla_A X &= 0 \\
\nabla_X A &= - \ad_A X \\
\nabla_X Y &= \nabla_X^\mfr{n} Y + \mrm{pr}_\mfr{a} \sigma(X,Y).
\end{aligned}
\end{equation}
The Riemann curvature tensor satisfies
\[ R(T,U,V,W) 
= \langle \nabla_T V,\nabla_U W \rangle 
- \langle \nabla_U V,\nabla_T W \rangle
- \langle \nabla_{[T,U]} V, W \rangle. \]
Using this and \eqref{eq:nabla-s},
\begin{equation}\label{eq:riem-s}
\begin{aligned}
R(A,B,C,D) &= 0 \\ 
R(A,B,C,X) &= 0 \\
R(A,B,X,Y) &= 0 \\
R(A,X,B,Y) &= \langle \ad_B \ad_A X,Y \rangle, \\
R(X,Y,Z,A) &= - \langle \nabla_X^\mfr{n} Z,\ad_A Y \rangle 
              + \langle \nabla_Y^\mfr{n} Z,\ad_A X \rangle 
              - \langle \mrm{pr}_\mfr{a} \sigma([X,Y],Z),A \rangle \\
R(X,Y,Z,W) & = R_\mfr{n}(X,Y,Z,W) \\
&\quad + \langle \mrm{pr}_\mfr{a} \sigma(X,Z),\mrm{pr}_\mfr{a} \sigma(Y,W) \rangle 
       - \langle \mrm{pr}_\mfr{a} \sigma(Y,Z),\mrm{pr}_\mfr{a} \sigma(X,W) \rangle.
\end{aligned}
\end{equation}
The Ricci curvature (see also \cite[Lemma 1.4]{Wolter1991} or \cite[Lemma 4.4]{Heber1998}) is
\begin{equation}\label{eq:ric-s}
\begin{aligned}
\Rc(A,B) &= -\tr (\ad_A \circ \ad_B) \\
\Rc(X,A) &= 0 \\
\Rc(X,Y) &= \Ric_\mfr{n}(X,Y) - \langle \ad_H X,Y \rangle.
\end{aligned}
\end{equation}

Since we're ultimately interested in the operators $\Ro$ and $Q$, we calculate $\langle \Ro h,h \rangle$ and $\langle \Ric \circ h,h \rangle$.  The idea is to exploit the product structure of the algebra.  First, note that a symmetric 2-tensor has the form
\begin{equation}\label{eq:h-decomp}
h = \left(
\begin{BMAT}{ccc.ccc}{ccc.ccc}
 &            & & &                 & \\
 & h_{ij}     & & & h_{i\beta}      & \\
 &            & & &                 & \\
 &            & & &                 & \\
 & h_{i\beta} & & & h_{\alpha\beta} & \\
 &            & & &                 &
\end{BMAT} \right), 
\end{equation}
and
\begin{equation}\label{eq:h-norm-n}
|h|^2
= \sum_{ij} h_{ij}^2 
+ 2 \sum_{i\beta} h_{i\beta}^2
+ \sum_{\alpha\beta} h_{\alpha\beta}^2.
\end{equation}  
We use capital Roman letters to refer to indices ranging over both $1,\dots,n$ (from $\mfr{n}$) and $1,\dots,m$ (from $\mfr{a}$), that is, $\sum_I = \sum_i + \sum_\alpha$.
Then summing over repeated indices,
\begin{equation}\label{eq:Ro-expand}
\begin{aligned}
\langle \Ro h,h \rangle
&= R_{IJKL} h_{IL} h_{JK} \\
&= R_{ijk\ell} h_{i\ell} h_{jk} 
   + R_{ijk\delta} h_{i\delta} h_{jk} 
   + R_{ij\gamma\ell} h_{i\ell} h_{j\gamma}
   + \cancel{R_{ij\gamma\delta}} h_{i\delta} h_{j\gamma} \\
&\quad + R_{i\beta k\ell} h_{i\ell} h_{\beta k} 
       + R_{i\beta k\delta} h_{i\delta} h_{\beta k} 
       + R_{i\beta\gamma\ell} h_{i\ell} h_{\beta\gamma}
       + \cancel{R_{\alpha j \gamma \delta}} h_{i\delta} h_{\beta\gamma} \\
&\quad + R_{\alpha jk\ell} h_{\alpha\ell} h_{jk} 
       + R_{\alpha jk\delta} h_{\alpha\delta} h_{jk} 
       + R_{\alpha j\gamma \ell} h_{\alpha\ell} h_{j \gamma} 
       + \cancel{R_{\alpha j \gamma \delta}} h_{\alpha \delta} h_{j \gamma} \\
&\quad + \cancel{R_{\alpha\beta k\ell}} h_{\alpha\ell} h_{\beta k} 
       + \cancel{R_{\alpha\beta k\delta}} h_{\alpha\delta} h_{\beta k} 
       + \cancel{R_{\alpha\beta\gamma\ell}} h_{\alpha\ell} h_{\beta\gamma} 
       + \cancel{R_{\alpha\beta\gamma\delta}} h_{\alpha\delta} h_{\beta\gamma}
\end{aligned}
\end{equation}
and
\begin{equation}\label{eq:Rico-expand}
\begin{aligned}
\langle \Ric \circ h,h \rangle
&= R_{IJ} h_{JK} h_{KI} \\
&= R_{ij} h_{jk} h_{ki}
 + R_{ij} h_{j\gamma} h_{\gamma i}
 + \cancel{R_{i\beta}} h_{\beta k} h_{ki}
 + \cancel{R_{i\beta}} h_{\beta\gamma} h_{\gamma i} \\
&\quad + \cancel{R_{\alpha j}} h_{jk} h_{k\alpha}
 + \cancel{R_{\alpha j}} h_{j\gamma} h_{\gamma \alpha}
 + R_{\alpha\beta} h_{\beta k} h_{k\alpha}
 + R_{\alpha\beta} h_{\beta\gamma} h_{\gamma\alpha}
\end{aligned}
\end{equation}  
These types of expansions will also appear later in the paper.

\subsection{Codimension-one nilradicals}

Let $(\mfr{s},\langle \cdot,\cdot \rangle)$ be $\lambda$-Einstein and suppose that $\langle \Ro \tilde{h},\tilde{h} \rangle < -\lambda |\tilde{h}|^2$ for all symmetric 2-tensors $\tilde{h}$ on $\mfr{s}$.  By Corollary \ref{cor:alg-stab-cond}, the metric is strictly linearly stable.  Suppose further that $\mfr{s}$ has a codimension-one nilradical $\mfr{n}$, which admits a nilsoliton by Theorem \ref{thm:lauret}.  We want to show that $Q_\mfr{n}(h) < \half \tr D |h|_\mfr{n}^2$, where $h$ now lives on $\mfr{n}$.  Such an $h$ can be thought of as a certain type of tensor on $\mfr{s}$, where
\[ h(X,A) = h(A,X) = h(A,B) = 0 \]
for all $X \in \mfr{n}$, $A,B \in \mfr{a}$.  This will allow us to use the estimate for $\langle \Ro h,h \rangle$.

We begin with an observation about the quantity $\mrm{pr}_\mfr{a} \sigma(X,Y)$.

\begin{lem}\label{lem:1-dim-D}
Let $(\mfr{n},\langle \cdot,\cdot \rangle_\mfr{n})$ be a nilsoliton with $\Ric = \lambda \, \id + D$.  Consider the 1-dimensional Einstein extension $\mfr{s} = \mfr{n} \rtimes \R A$, where $|A|^2 = 1$.  Then we have 
\[ \ad_A = \frac{1}{\sqrt{\tr D}} D, \qquad 
   \mrm{pr}_\mfr{a} \sigma(X,Y) = \frac{1}{\sqrt{\tr D}} \langle DX, Y \rangle A. \]
\end{lem}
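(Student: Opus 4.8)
The plan is to pin down the single proportionality constant relating the unit generator $A$ to the derivation $D$, and then to read off both formulas. Since we are taking the one-dimensional Einstein extension $\mfr{s} = \mfr{n}\rtimes\R A$ of Theorem \ref{thm:lauret}, the subalgebra $\mfr{a} = \R A$ is spanned by the nilsoliton derivation $D$ (which is symmetric, hence an admissible element of the abelian subalgebra of symmetric derivations), so $A = sD$ for some $s>0$, and $\ad_A|_\mfr{n}$ is just the derivation $A = sD$ by the semidirect bracket \eqref{eq:semi-direct}. Everything reduces to computing $s$, which is forced by the normalization $|A|^2 = 1$ together with the prescribed inner product $\ip{A,A} = -\tfrac1\lambda\tr(A^2)$ from Theorem \ref{thm:lauret}(a).

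The heart of the argument is the trace identity $\tr(D^2) = -\lambda\tr D$, equivalently $\tr(\Ric_\mfr{n}\circ D) = 0$. First I would establish this. Writing $\Ric_\mfr{n} = -\half M_1 + \tfrac14 M_2$ in terms of the two nonnegative operators built from the bracket, with $\ip{M_1 X, Y} = \sum_j\ip{[X_j,X],[X_j,Y]}$ and $\ip{M_2 X, Y} = \sum_{j,k}\ip{[X_j,X_k],X}\ip{[X_j,X_k],Y}$, one applies the derivation identity $D[X_j,X_i] = [DX_j,X_i]+[X_j,DX_i]$ inside $\tr(M_1 D)$ and relabels indices to obtain $\tr(M_1 D) = \half\tr(M_2 D)$; substituting gives $\tr(\Ric_\mfr{n}\circ D) = 0$. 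Since $\Ric_\mfr{n} = \lambda\id + D$, this reads $\lambda\tr D + \tr(D^2) = 0$, the desired identity. (This is a standard fact for nilpotent metric Lie algebras and could alternatively be cited from the nilsoliton literature.) Feeding $\tr(A^2) = s^2\tr(D^2) = -\lambda s^2\tr D$ into $|A|^2 = -\tfrac1\lambda\tr(A^2) = s^2\tr D = 1$ yields $s = 1/\sqrt{\tr D}$, proving $\ad_A = \tfrac{1}{\sqrt{\tr D}}D$.

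For the second formula I would use the covariant-derivative formulas \eqref{eq:nabla-s} rather than unwinding $\sigma$ by hand. Since $[X,Y]\in\mfr{n}$, the relation $2\nabla_X Y = [X,Y] + 2\sigma(X,Y)$ shows $\mrm{pr}_\mfr{a}\sigma(X,Y) = \mrm{pr}_\mfr{a}\nabla_X Y = \ip{\nabla_X Y, A}\,A$. By metric compatibility and left-invariance, $\ip{\nabla_X Y, A} = -\ip{Y,\nabla_X A}$, and \eqref{eq:nabla-s} gives $\nabla_X A = -\ad_A X = -\tfrac{1}{\sqrt{\tr D}}DX$; together with the symmetry of $D$ this produces $\ip{\nabla_X Y, A} = \tfrac{1}{\sqrt{\tr D}}\ip{DX,Y}$, hence the claimed expression for $\mrm{pr}_\mfr{a}\sigma(X,Y)$.

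The only real obstacle is the trace identity $\tr(\Ric_\mfr{n}\circ D)=0$; once that normalization is in hand, both formulas are immediate from the structure equations \eqref{eq:nabla-s} and \eqref{eq:semi-direct}. I expect the index relabeling that yields $\tr(M_1 D) = \half\tr(M_2 D)$ to be the most error-prone part of the computation, though it is entirely routine.
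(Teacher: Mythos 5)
Your proposal is correct and follows essentially the same route as the paper: write $\ad_A = cD$, determine $c$ from $1 = |A|^2 = -\tfrac{c^2}{\lambda}\tr D^2$ together with $-\lambda = \tr D^2/\tr D$, and identify $\mrm{pr}_\mfr{a}\sigma(X,Y)$ with the $\mfr{a}$-component of $\nabla_X Y$. The only differences are cosmetic: you supply a proof of the trace identity $\tr(\Ric_\mfr{n}\circ D)=0$ (which the paper invokes without comment, and which is standard from Lauret's work), and you obtain $\langle\nabla_X Y,A\rangle = \langle Y,\ad_A X\rangle$ via metric compatibility rather than by expanding $\sigma$ in terms of $\ad^*$.
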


\begin{proof}
Let $A$ be a unit vector spanning $\mfr{a}$, so that $\ad_A = c D$ for some $c \in \R$ to be determined, and let $X,Y \in \mfr{n}$.  The decomposition of the covariant derivative from \eqref{eq:nabla-s} involves the quantity $\mrm{pr}_\mfr{a} \sigma(X,Y)$, which in this case is essentially the second fundamental form, $II(X,Y)$, thought of as a real-valued bilinear map.  We compute
\begin{align*}
\mrm{pr}_\mfr{a} \sigma(X,Y)
&= II(X,Y) A \\
&= \langle \nabla_X Y,A \rangle A \\
&= \langle \half \ad_X Y - \half \ad_X^* Y - \half \ad_Y^* X, A \rangle A \\
&= \half \langle Y, \ad_A X \rangle A + \half \langle X, \ad_A Y \rangle A \\
&= \langle cDX, Y \rangle A
\end{align*}
This means that on the endomorphism level, $II = cD = \ad_A$.  To find $c$, we use the form of the metric on $\mfr{a}$ as in Theorem \ref{thm:lauret} (a):
\[ 1 = |A|^2 = -\frac{1}{\lambda} \tr \ad_A^2 = -\frac{c^2}{\lambda} \tr D^2, \]
and since $-\lambda = \tr D^2/\tr D$, we have $c = \sqrt{1/\tr D}$.  The desired formula follows.
\end{proof}

Now, using \eqref{eq:Ro-expand} together with \eqref{eq:riem-s} and the Lemma, it is not hard to compute
\begin{equation}\label{eq:ro-relations}
\begin{aligned}
\langle \Ro h,h \rangle  
&= \langle \Ro_\mfr{n} h,h \rangle_\mfr{n} + \frac{1}{\tr D} ( \tr(D \circ h)^2 - \langle D,h \rangle^2 ). 
\end{aligned}
\end{equation}
Similarly, from \eqref{eq:Rico-expand}, \eqref{eq:ric-s}, and the Lemma we obtain
\begin{equation}\label{eq:rico-relations}
\begin{aligned}
\langle \Ric \circ h,h \rangle
&= \langle \Ric_\mfr{n} \circ h,h \rangle_\mfr{n}^2 + \langle D \circ h,h \rangle.
\end{aligned}
\end{equation}

Now we can prove the main result of this section.

\begin{prop}
Let $\mfr{s}$ be a solvable Lie algebra with codimension-one nilradical $\mfr{n} = [\mfr{s},\mfr{s}]$.  If a $\lambda$-Einstein metric on $\mfr{s}$ satisfies $\langle \Ro h,h \rangle < -\lambda |h|^2$ for all symmetric 2-tensors $h$, then a sufficient condition for the nilsoliton on $\mfr{n}$ to be strictly linearly stable is that
\[ \max\{d_i\} < \frac{1}{2+\sqrt{2}} \tr D, \]
where $d_1,\dots,d_n$ are the eigenvalues of the nilsoliton derivation $D$.
\end{prop}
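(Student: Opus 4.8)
The plan is to reduce the statement to a single pointwise algebraic inequality among the eigenvalues $d_i$ of the nilsoliton derivation $D$, and then to prove that inequality by a carefully chosen Cauchy--Schwarz step. By Corollary \ref{cor:alg-stab-cond} it suffices to show $Q_\mfr{n}(h) < \half \tr D\, |h|_\mfr{n}^2$ for every nonzero symmetric $2$-tensor $h$ on $\mfr{n}$. I would extend such an $h$ to a symmetric $2$-tensor on $\mfr{s}$ by declaring $h(X,A)=h(A,B)=0$ for $A,B\in\mfr{a}$; then $|h|_\mfr{s}^2 = |h|_\mfr{n}^2$, and the Einstein hypothesis $\langle \Ro h,h\rangle < -\lambda|h|^2$ applies to this extension.

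First I would assemble the ingredients. The nilsoliton equation $\Ric_\mfr{n} = \lambda\,\id + D$ gives $\langle \Ric_\mfr{n}\circ h, h\rangle_\mfr{n} = \lambda|h|^2 + \tr(Dh^2)$, while \eqref{eq:ro-relations} lets me solve for $\langle \Ro_\mfr{n} h,h\rangle_\mfr{n}$ and then insert the Einstein bound. The two $\lambda|h|^2$ terms cancel, leaving
\[
Q_\mfr{n}(h) < \tr(Dh^2) - \frac{1}{\tr D}\big(\tr((Dh)^2) - (\tr(Dh))^2\big).
\]
Thus everything reduces to the purely algebraic claim that the right-hand side is $\le \half \tr D\,|h|^2$. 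Diagonalizing $D$ with eigenvalues $d_i>0$ and writing $T=\tr D$, $a_i = h_{ii}$, $d_{\max}=\max_i d_i$, this reads
\[
\sum_{ij} d_i h_{ij}^2 - \frac{1}{T}\sum_{ij} d_i d_j h_{ij}^2 + \frac{1}{T}\Big(\sum_i d_i a_i\Big)^2 \le \half T\sum_{ij} h_{ij}^2,
\]
and I would split it into a diagonal part (in the $a_i$) and an off-diagonal part.

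The off-diagonal part is routine: pairing the $(i,j)$ and $(j,i)$ terms gives $\sum_{i<j}(d_i + d_j - 2d_id_j/T)h_{ij}^2$, and since $d_i + d_j \le 2d_{\max} < (2-\sqrt2)T < T$, this contribution is strictly dominated by $T\sum_{i<j}h_{ij}^2$. The diagonal part is the crux and the main obstacle: it is the quadratic form $\sum_i d_i(1-d_i/T)a_i^2 + \tfrac1T(\sum_i d_i a_i)^2$, and bounding the operator norms of its diagonal and rank-one pieces separately is too lossy---it only forces $d_{\max}\le \tfrac14 T$, weaker than claimed. The step I expect to work is Cauchy--Schwarz in the form $(\sum_i d_i a_i)^2 \le (\sum_i d_i)(\sum_i d_i a_i^2) = T\sum_i d_i a_i^2$ applied to the rank-one term, which \emph{merges} it with the weighted diagonal term and collapses the diagonal part to $\sum_i d_i(2 - d_i/T)a_i^2$. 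As $x\mapsto x(2-x/T)$ is increasing on $[0,T]$, this is at most $d_{\max}(2-d_{\max}/T)\,|a|^2$, and the scalar inequality $d_{\max}(2-d_{\max}/T)\le \half T$ becomes, on setting $x=d_{\max}/T$, the condition $x^2-2x+\half\ge0$, i.e.\ $x\le 1-\tfrac1{\sqrt2}=\tfrac1{2+\sqrt2}$---precisely the stated hypothesis $\max_i d_i < \tfrac1{2+\sqrt2}\tr D$. Combining the two estimates gives the algebraic inequality, and the strictness of the Einstein hypothesis propagates through the chain to yield $Q_\mfr{n}(h) < \half\tr D\,|h|^2$ for all $h\neq 0$, which is the desired strict linear stability.
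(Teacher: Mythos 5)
Your proposal is correct and follows essentially the same route as the paper: the same reduction via \eqref{eq:ro-relations} to the algebraic inequality $\langle D,h\rangle^2 - \tr(D\circ h)^2 + \tr D\,\langle D\circ h,h\rangle < \tfrac12(\tr D)^2|h|^2$, with your Cauchy--Schwarz step $(\sum_i d_i a_i)^2 \le \tr D \sum_i d_i a_i^2$ being exactly the paper's arithmetic/geometric mean estimate $\sum_{ij} d_i d_j h_{ii}h_{jj} \le \sum_{ij} d_i d_j h_{ii}^2$ in disguise, and the same terminal scalar inequality $2d_i\tr D - d_i^2 < \tfrac12(\tr D)^2$ yielding the threshold $\tfrac{1}{2+\sqrt2}\tr D$. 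The only cosmetic differences are that you split diagonal from off-diagonal terms before applying the key inequality and invoke monotonicity of $x\mapsto x(2-x/T)$ rather than factoring the quadratic.
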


\begin{proof}
Our assumption on $\langle \cdot,\cdot \rangle$ says that $Q(h) < 0$, and we want to show that $Q_\mfr{n}(h) < \half \tr D |h|^2$.  Using \eqref{eq:ro-relations} and \eqref{eq:rico-relations} it is easy to see that
\begin{align*}
Q_\mfr{n}(h)
&= \langle \Ro_\mfr{n} h,h \rangle_\mfr{n} + \langle \Ric_\mfr{n} \circ h,h \rangle_\mfr{n} \\
&= Q(h) - \frac{1}{\tr D} ( \tr(D \circ h)^2 - \langle D,h \rangle^2 ) - \langle D \circ h,h \rangle \\
&< \frac{1}{\tr D} (\langle D,h \rangle^2 - \tr(D \circ h)^2 + \tr D \langle D \circ h,h \rangle).
\end{align*}
Therefore, we need to show that
\[ \langle D,h \rangle^2 - \tr(D \circ h)^2 + \tr D \langle D \circ h,h \rangle
\overset{?}{<} \smfrac{1}{2} (\tr D)^2 |h|^2_\mfr{n}. \]

Select an orthonormal basis of $\mfr{n}$ such that $D$ is diagonalized, with (positive) diagonal entries $\{d_1,\dots,d_n\}$.  Then we have
\begin{align*}
&\langle D,h \rangle^2 - \tr(D \circ h)^2 + \tr D \langle D \circ h,h \rangle \\
&= \sum_{ijk\ell} D_j^i D_\ell^k h_{k\ell} h_{ij}
  -\sum_{ijk\ell} D_\ell^i D_j^k h_{k\ell} h_{ij} 
  + \tr D \sum_{ijk} D_j^i h_{jk} h_{ki} \\
&= \sum_{ij} d_i d_j h_{ii} h_{jj}
  -\sum_{ij} d_i d_j h_{ij}^2 
  + \tr D \sum_{i,j} d_i h_{ij}^2 \\
&\leq \sum_{ij} d_i d_j h_{ii}^2 
  -\sum_{ij} d_i d_j h_{ij}^2 
  + \tr D \sum_{ij} d_i h_{ij}^2 \\
&= \tr D \sum_{i} d_i h_{ii}^2 
  -\sum_{i} d_i^2 h_{ii}^2 
  -\sum_{i \ne j} d_i d_j h_{ij}^2 
  + \tr D \sum_{i} d_i h_{ii}^2 
  + \tr D \sum_{i\ne j} d_i h_{ii}^2 \\
&= \sum_{i} (2 d_i \tr D  - d_i^2) h_{ii}^2 
  +\sum_{i \ne j} (d_i \tr D - d_i d_j) h_{ij}^2 \\  
&\overset{?}{<} 
   \sum_{i} \half (\tr D)^2  h_{ii}^2 
  +\sum_{i \ne j} \half (\tr D)^2 h_{ij}^2 \\
&= \smfrac{1}{2} (\tr D)^2 |h|^2.    
\end{align*}
The first inequality uses the arithmetic/geometric mean inequality.  For the second inequality to hold, it is enough for the following two inequalities to hold:
\begin{align}
2 d_i \tr D  - d_i^2 &\overset{?}{<} \smfrac{1}{2} (\tr D)^2, \label{eq:ineq1} \\ 
d_i \tr D - d_i d_j  &\overset{?}{<} \smfrac{1}{2} (\tr D)^2. \label{eq:ineq2}
\end{align}
We rewrite \eqref{eq:ineq1} as follows:
\begin{align*}
0 
&\overset{?}{<} \smfrac{1}{4} (\tr D)^2 - d_i \tr D + \half d_i^2 \\
&= ( \half \tr D - d_i)^2 - \half d_i^2 \\
&= ( \half \tr D - (1+\smfrac{1}{\sqrt{2}}) d_i)( \half \tr D + (-1+\smfrac{1}{\sqrt{2}}) d_i) 
\end{align*}
This is equivalent to one the following statements
\[ \tr D - (2+\sqrt{2}) d_i < 0 \quad \text{and} \quad \tr D + (-2+\sqrt{2}) d_i < 0 \]
or
\[ \tr D - (2+\sqrt{2}) d_i > 0 \quad \text{and} \quad \tr D + (-2+\sqrt{2}) d_i > 0 \]
The second condition in the first statement is never satisfied, since $\tr D > d_i$ for all $i$, so we consider the second statement.  The inequalities there can be rewritten
\[ d_i < \frac{1}{2+\sqrt{2}} \tr D, \qquad d_i < \frac{1}{2-\sqrt{2}} \tr D, \]
and the second is always satisfied.  Therefore \eqref{eq:ineq1} is equivalent to $d_i < \frac{1}{2+\sqrt{2}} \tr D$.

Now, notice that \eqref{eq:ineq2} follows from \eqref{eq:ineq1}:
\[ d_i \tr D - d_i d_j < \frac{1}{2+\sqrt{2}} (\tr D)^2 < \frac{1}{2} (\tr D)^2. \qedhere \]
\end{proof}

\subsection{Open sets of stable solvsolitons}

Here we verify the last statement in Theorem \ref{thm:stab-all} by applying Corollary \ref{cor:alg-stab-cond} and observing that $Q$ and $D$ vary continuously as we vary the Lie structure of the underlying solvable Lie group.  In all of the following, we normalize our soliton metric by fixing the soliton constant $\lambda$ appearing in the equation $\Ric = \lambda \id + D$.

Consider a nilpotent Lie algebra $\mfr n$ which admits a nilsoliton metric.  As described in Theorem \ref{thm:lauret}, for any abelian subalgebra $\mfr a$ of symmetric derivations, the solvable Lie algebra $\mfr s = \mathfrak n \rtimes \mfr a$ admits a solvsoliton.  Further, the solvsoliton derivation $D$ on $\mfr{s}$ satisfies
\begin{equation}\label{eq:der-s}
D|_\mfr{n} = D_\mfr{n} - \ad_H, 
\end{equation}
where $H$ is the mean curvature of $\mfr s$ and $D_\mfr{n}$ is the soliton derivation of $\mfr n$.  The metric on $\mfr s$ is Einstein precisely when $\ad_H = D_\mfr{n}$. 

To see more precisely how a choice of abelian subalgebra of symmetric derivations effects the resulting solvsoliton, we recall a Corollary of Lauret's Theorem; see, e.g., \cite{Will2011}.  Let $\sym(\mfr{n})$ denote symmetric linear maps on $\mfr{n}$, with respect to the nilsoliton metric.  Define the \textit{rank} of the nilpotent Lie algebra to be $\rank(\mfr{n}) = \dim(\mfr{a}_\mfr{n})$, where $\mfr{a}_\mfr{n} \subset \Der(\mfr{n}) \cap \sym(\mfr{n})$ is a maximal abelian subalgebra. 

\begin{prop}[\cite{Will2011}]\label{prop:sol-space}
Let $0 < k < \rank(\mfr{n})$.  The moduli space of $(n+k)$-dimensional solvsoliton extensions of $\mfr{n}$, up to isometry and scaling, is parametrized by
\[ \Gr_k(\mfr{a}_\mfr{n})/W_\mfr{n}, \]
where $W_\mfr{n}$ is the Weyl group of $G_\mfr{n} := \{g \in \Aut(\mfr{n}) \mid g^t \in \Aut(\mfr{n})\}$.
\end{prop}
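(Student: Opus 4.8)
The plan is to reduce the isometry classification of the extensions to an explicit conjugacy problem inside the real reductive group $G_\mfr{n}$, and then to read off the orbit space as $\Gr_k(\mfr{a}_\mfr{n})/W_\mfr{n}$. First I would record the structural inputs from Theorem \ref{thm:lauret}: parts (a)--(b) say that every $(n+k)$-dimensional solvsoliton extension of the fixed nilsoliton $(\mfr{n},\langle\cdot,\cdot\rangle_\mfr{n})$ has the form $\mfr{s} = \mfr{n}\rtimes\mfr{a}$ for a $k$-dimensional abelian subalgebra $\mfr{a}$ of symmetric derivations, with $\mfr{n}\perp\mfr{a}$ and $\langle\cdot,\cdot\rangle|_\mfr{n} = \langle\cdot,\cdot\rangle_\mfr{n}$; and part (c), together with the standard-form theory for isometries of solvmanifolds, says that two such extensions are isometric up to scaling exactly when they are isomorphic as metric Lie algebras, by an isomorphism that is simultaneously a linear isometry. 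Thus the raw parameter set is the collection of $k$-dimensional abelian subalgebras of $\mfr{p}_\mfr{n} := \Der(\mfr{n})\cap\sym(\mfr{n})$, and the equivalence is realized by metric-isometric Lie algebra isomorphisms.

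Next I would invoke the Cartan structure of $G_\mfr{n} = \{g\in\Aut(\mfr{n}) : g^t\in\Aut(\mfr{n})\}$. Because $G_\mfr{n}$ is self-adjoint it is real reductive, with decomposition $G_\mfr{n} = K_\mfr{n}\exp(\mfr{p}_\mfr{n})$, where $K_\mfr{n} = G_\mfr{n}\cap O(\mfr{n})$ is precisely the group of isometric automorphisms of the nilsoliton metric and $\mfr{p}_\mfr{n}$ is the space of symmetric derivations; by definition $\mfr{a}_\mfr{n}$ is a maximal abelian subalgebra of $\mfr{p}_\mfr{n}$. Standard conjugacy theory for such decompositions then gives the surjectivity half: every abelian subalgebra of $\mfr{p}_\mfr{n}$ is contained in a maximal one, all maximal abelian subalgebras of $\mfr{p}_\mfr{n}$ are $K_\mfr{n}$-conjugate, so every $\mfr{a}$ is carried by some $k\in K_\mfr{n}$ into $\mfr{a}_\mfr{n}$. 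Since $k$ acts by an isometric automorphism, this exhibits every extension as isometric (up to scaling) to one with $\mfr{a}\in\Gr_k(\mfr{a}_\mfr{n})$, producing a surjection $\Gr_k(\mfr{a}_\mfr{n}) \twoheadrightarrow \{\text{extensions}\}/(\text{isometry, scaling})$.

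The heart of the argument is identifying the fibers of this surjection with $W_\mfr{n}$-orbits, where $W_\mfr{n} = N_{K_\mfr{n}}(\mfr{a}_\mfr{n})/Z_{K_\mfr{n}}(\mfr{a}_\mfr{n})$. For two subspaces $\mfr{a},\mfr{a}'\subseteq\mfr{a}_\mfr{n}$, the backward implication is immediate, since representatives of $W_\mfr{n}$ lie in $K_\mfr{n}$ and so extend to isometries of the two extensions. For the forward implication I would take a metric-isometric isomorphism $\phi$; since the nilradical $\mfr{n}$ is characteristic (the nonzero symmetric derivations in $\mfr{a}$ are semisimple, so $\mfr{s}$ has nilradical exactly $\mfr{n}$), $\phi(\mfr{n}) = \mfr{n}$, and because $\phi$ is a linear isometry with $\mfr{a} = \mfr{n}^\perp$ and $\mfr{a}' = \mfr{n}^\perp$ it follows that $\phi(\mfr{a}) = \mfr{a}'$ and $\psi := \phi|_\mfr{n}\in K_\mfr{n}$. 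The homomorphism property then reads $\Ad(\psi)\mfr{a} = \mfr{a}'$ with $\psi\in K_\mfr{n}$ and both subspaces inside $\mfr{a}_\mfr{n}$. Finally, a centralizer argument upgrades this $K_\mfr{n}$-conjugacy to a $W_\mfr{n}$-conjugacy: the maximal abelian subspaces $\mfr{a}_\mfr{n}$ and $\Ad(\psi)\mfr{a}_\mfr{n}$ of $\mfr{p}_\mfr{n}$ both contain $\mfr{a}'$, hence both sit as maximal abelian subspaces inside the centralizer $\mfr{z}_{\mfr{p}_\mfr{n}}(\mfr{a}')$, so some $k'\in Z_{K_\mfr{n}}(\mfr{a}')$ sends $\Ad(\psi)\mfr{a}_\mfr{n}$ back to $\mfr{a}_\mfr{n}$; then $w := k'\psi$ normalizes $\mfr{a}_\mfr{n}$ and still sends $\mfr{a}$ to $\mfr{a}'$, so its class in $W_\mfr{n}$ satisfies $w\cdot\mfr{a} = \mfr{a}'$. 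Passing to the quotient gives the stated parametrization.

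I expect the main obstacle to lie in the very first step: the assertion that an isometry up to scaling between two standard solvsoliton extensions can be realized by a Lie algebra isomorphism that is simultaneously a linear isometry. This is the genuinely deep input, resting on Lauret's uniqueness theorem (Theorem \ref{thm:lauret}(c)) and the standard-form rigidity theory for isometries of solvmanifolds (Gordon--Wilson, Heber), rather than on formal manipulation; once it is in hand, the orthogonality $\mfr{a} = \mfr{n}^\perp$ removes any shearing ambiguity in the semidirect product, and the remaining reductive-group conjugacy and centralizer arguments are standard. A secondary point to verify carefully is precisely this Cartan/Weyl reduction from $K_\mfr{n}$-conjugacy of subspaces of $\mfr{a}_\mfr{n}$ to the $W_\mfr{n}$-action, for which the centralizer argument above is the mechanism.
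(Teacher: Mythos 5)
The paper does not actually prove this proposition---it is quoted from Will \cite{Will2011}, where it appears as a corollary of Lauret's structure theory, so there is no in-paper argument to compare against. Your reconstruction is essentially the argument given in that reference: reduce to $K_\mfr{n}$-conjugacy of abelian subalgebras of symmetric derivations via the Cartan decomposition of the self-adjoint group $G_\mfr{n}$, upgrade to $W_\mfr{n}$-conjugacy by the standard centralizer argument, and rest the ``isometric $\Rightarrow$ isometrically isomorphic'' step on Lauret's uniqueness theorem together with Gordon--Wilson/Heber standard-position rigidity, which you correctly flag as the one genuinely deep input.
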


Note that when $k=0$ or $k=\rank(\mfr{a}_\mfr{n})$ the moduli space is a point consisting of the original nilsoliton in the former case, or a single Einstein metric in the latter.

Now we consider changing $\mfr{a} \subset \mfr{a}_\mfr{n}$, which amounts to moving within the Grassmannian $\Gr_k(\mfr{a}_\mfr{n})$.  The mean curvature vector $H \in \mfr a$, which satisfies $\langle H,A \rangle = \tr \ad_A$ for all $A\in \mfr s$, can be alternately described as the unique vector $H \in \mfr a$ that solves
\[ -\frac{1}{\lambda} \tr (\ad_H \circ \ad_A) = \tr \ad_A \quad \mbox{ for all } A \in \mfr a. \]
From this and \eqref{eq:der-s} we see that the solvsoliton derivation $D$ of $\mfr s$ depends continuously on $\mfr{a}$.

We also observe that the geometric quantities $\Ro$ and $\Ric$ depend continuously on $\mfr a$, since these quantities are completely determined by the structure constants of the Lie algebra, which are determined by $\mfr{a}$ and $\mfr{n}$; see Subsection \ref{subsec:solv-curv}.  Hence, $Q$ also depends continuously on $\mfr{a}$.

Finally, suppose that the soliton on $\mfr{s} = \mfr{n} \rtimes \mfr{a}$ satisfies the condition of Corollary \ref{cor:alg-stab-cond}.  As long as $0 < \dim \mfr{a} < \rank(\mfr{n})$, the condition of Corollary \ref{cor:alg-stab-cond} is preserved under sufficiently small perturbations of $\mfr{a}$.  The result is an open set in $\Gr_k(\mfr{a}_\mfr{n})$ of stable metrics.  Taking the quotient by the finite group $W_\mfr{n}$, we have an open set of stable metrics in the moduli space of solvsoliton extensions of $\mfr{n}$.

\section{Two-step solvsolitons}\label{sec:abelian}

Consider an abelian nilsoliton $\mfr{n}$, which we identify with $\R^n$ through an orthonormal basis $\{X_i\}$.  Since the metric is flat, $\Ric_\mfr{n} = 0$ and we may choose any $\lambda < 0$ as the soliton constant.  If we pick $\lambda = -1$, then the soliton derivation is $D_\mfr{n} = \id_\mfr{n}$.  Strict linear stability of these solitons follows from Corollary \ref{cor:alg-stab-cond}, since $Q(h) = 0 < |h|^2$ when $h \ne 0$.

Solvable extensions of abelian nilpotent Lie groups are called two-step solvable Lie groups.  For example, real hyperbolic space is obtained by adjoining to $\R^n$ the 1-dimensional algebra spanned by the identity map (which is a derivation).  Similarly, any other linear map yields a two-step solvable space admitting a solvsoliton.  Using \ref{prop:sec-stab}, we construct an open set of strictly linearly stable two-step solvsolitons.

\subsection{Construction}

As described in see \cite[Section 3.1]{Will2011}, the algebra of derivations of $\mfr{n}$ is $\Der(\mfr{n}) = \mfr{gl}(n,\R)$, and with respect to the chosen orthonormal basis, a maximal abelian subalgebra $\mfr{a}_0 \subset \mfr{gl}(n,\R) \cap \sym(\mfr{n})$ is the $n$-dimensional space of all diagonal matrices.  A choice of a subalgebra $\mfr{a} \subset \mfr{a}_0$ defines a solvsoliton extension of $\mfr{n}$ that is unique up to the action of the symmetric group $S_n$ on $\mfr{a}_0$ (which permutes diagonal entries).  Solvsoliton extensions of $\mfr{n}$ are therefore parametrized by the union of the spaces $\Gr_m \R^n/S_n$, where $0 \leq m \leq n$ is the dimension of $\mfr{a}$.  When $m=1$ this space is $\mbb{RP}^{n-1}/S_n$, and when $m=0$ or $m=n$ this space is a point consisting of only the nilsoliton or an Einstein metric, respectively.

Select a subalgebra $\mfr{a} \subset \mfr{a}_0$ of dimension $m$, which we describe with an orthonormal basis $\{A_\alpha\}$ consisting of diagonal matrices: 
\[ A_\alpha = \mrm{diag}(a_{\alpha 1},\dots,a_{\alpha n}) \qquad \alpha = 1,\dots,m. \]
(As before, Roman indices will refer to vectors in $\mfr{n}=\R^n$ and Greek to vectors in $\mfr{a}$).  By Theorem \ref{thm:lauret}, we get a solvsoliton metric on $\mfr{s} := \R^n \rtimes \mfr{a}$ that makes $\R^n$ and $\mfr{a}$ orthogonal, and on elements of $\mfr{a}$ it is given by
\begin{equation}\label{eq:abelian-onb}
\delta_{\alpha\beta} 
= \langle A_\alpha,A_\beta \rangle 
:= -\frac{1}{\lambda} \tr(A_\alpha A_\beta)
= \sum_{i=1}^n a_{\alpha i} a_{\beta i}.
\end{equation}
In particular, we have
\[ 1 = |A_\alpha|^2 = \sum_{i=1}^n (a_{\alpha i})^2. \]

The Lie algebra structure on $\R^n \rtimes \mfr{a}$ is the semi-direct product, so we have the bracket relations described in \eqref{eq:semi-direct}.  In terms of structure constants, we have
\[ c_{ij}^k = c_{ij}^\gamma = 0, \qquad 
c_{\alpha\beta}^k = c_{\alpha\beta}^\gamma = 0, \qquad
c_{\alpha i}^j = \delta_{ij} a_{\alpha i}, \qquad
c_{\alpha i}^\gamma = 0. \]
From this we can compute the transposes of the adjoint maps.  For example,  
\[ \ad_{X_i}^* X_j = - \delta_{ij} \sum_{\gamma=1}^m a_{\gamma i} A_\gamma, \qquad
   \ad_{X_i}^* A_\alpha = 0. \]

The mean curvature vector is $H \in \mfr{a}$ such that $\langle H,A \rangle = \tr \ad_A$ for all $A \in \mfr{a}$.  From this it is easy to compute that 
\[ H^\alpha 
= \langle H,A_\alpha \rangle \\
= \tr \ad_{A_\alpha} \\
= \tr A_\alpha \]
and so
\[ H 
= \sum_{\alpha} (\tr A_\alpha) A_\alpha.
\]
Its adjoint satisfies
\[ \ad_H|_\mfr{n} = \sum_\alpha (\tr A_\alpha) \ad_{A_\alpha}, \qquad
\ad_H|_\mfr{a} = 0. \]
This means $\ad_H$ is diagonal in any orthonormal basis, and the diagonal entries are $(\ad_H)_{ii} = \sum_\alpha (\tr A_\alpha) a_{\alpha i}$ or $0$.  We also have that
\[ |H|^2
= \tr \ad_H 
= \sum_\alpha (\tr A_\alpha)^2.
\]
We can say more about this quantity.

\begin{lem}\label{lem:abelian-norm-H}
The mean curvature vector corresponding to a two-step solvsoliton satisfies $|H|^2 \leq n$, with equality if and only if the metric is Einstein.
\end{lem}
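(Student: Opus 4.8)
The plan is to reduce the inequality to a statement about orthogonal projections. I would collect the coefficients $a_{\alpha i}$ into the $m \times n$ matrix $A = (a_{\alpha i})$; the orthonormality relation \eqref{eq:abelian-onb} (with $\lambda = -1$) says precisely that $A$ has orthonormal rows, i.e.\ $A A^t = I_m$. Writing $\mbf{1} = (1,\dots,1)^t \in \R^n$, the trace of each derivation is $\tr A_\alpha = \sum_i a_{\alpha i} = (A\mbf{1})_\alpha$, so the formula $|H|^2 = \sum_\alpha (\tr A_\alpha)^2$ computed above becomes
\[ |H|^2 = |A\mbf{1}|^2 = \mbf{1}^t A^t A\, \mbf{1}. \]

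Next I would observe that $P := A^t A$ is the orthogonal projection of $\R^n$ onto the $m$-dimensional row space of $A$: it is symmetric, and $P^2 = A^t (A A^t) A = A^t A = P$ using $A A^t = I_m$. Since an orthogonal projection does not increase norms, it follows immediately that
\[ |H|^2 = |P\mbf{1}|^2 \leq |\mbf{1}|^2 = n, \]
which is the desired bound.

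For the equality case, note that $|P\mbf{1}|^2 = |\mbf{1}|^2$ holds if and only if $P\mbf{1} = \mbf{1}$, that is, if and only if $\mbf{1}$ lies in the row space of $A$. To connect this with the Einstein condition, I would recall from the computation preceding the Lemma that $\ad_H|_\mfr{n}$ is diagonal with entries
\[ (\ad_H)_{ii} = \sum_\alpha (\tr A_\alpha)\, a_{\alpha i} = (A^t A\, \mbf{1})_i = (P\mbf{1})_i. \]
By Theorem \ref{thm:lauret}(a), with $\lambda = -1$ so that $D_\mfr{n} = \id_\mfr{n}$, the metric is Einstein exactly when $\ad_H|_\mfr{n} = D_\mfr{n} = \id_\mfr{n}$, i.e.\ when $P\mbf{1} = \mbf{1}$. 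Thus equality in $|H|^2 \leq n$ is equivalent to the Einstein condition.

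The inequality itself is immediate once the matrix reformulation is in place, so I do not expect the bound to be the difficulty. The one point that requires genuine care is the equality statement: the work lies in the bookkeeping that identifies the diagonal entries $(\ad_H)_{ii}$ with the components $(P\mbf{1})_i$, so that the fixed-point condition $P\mbf{1}=\mbf{1}$ of the projection is recognized as precisely the Einstein criterion $\ad_H|_\mfr{n} = \id_\mfr{n}$ from Theorem \ref{thm:lauret}.
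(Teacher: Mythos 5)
Your proof is correct and takes essentially the same approach as the paper's: both encode the orthonormality relation \eqref{eq:abelian-onb} as $AA^t = I_m$ and bound $|H|^2 = |A\mbf{1}|^2$ by $|\mbf{1}|^2 = n$ via orthogonal projection onto the row space of $A$ --- the paper realizes the complementary projector by completing the rows to an orthogonal matrix $B \in \mrm{O}(n)$, while you use $P = A^tA$ directly. The equality cases also coincide, since $P\mbf{1}=\mbf{1}$, $\mbf{1}\in\mrm{rowspace}(A)$, and $D_{\mfr{n}}\in\mfr{a}$ are the same condition, matched to the Einstein criterion of Theorem \ref{thm:lauret}(a).
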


\begin{proof}
Think of elements of the orthonormal basis $\{A_1,\cdots,A_m\}$ as column vectors and put them into a matrix $A$ (which satisfies $A^* A = I_m$, by \eqref{eq:abelian-onb}).  Next, complete this basis to an orthonormal basis of the $n$-dimensional maximal abelian subalgebra $\mfr{a}_0$.  Adding these new columns to $A$, we get a matrix $B \in \mrm{O}(n)$.  Then $B B^* = I_n$ and so
\[ \delta_{ij} = (B B^*)_{ij} = \sum_{\gamma} a_{\gamma i} a_{\gamma j}. \]
Summing over $i$ and $j$ we get
\[
n 
= \sum_{ij\gamma} a_{\gamma i} a_{\gamma j} 
= \sum_\gamma (\tr A_\gamma)^2 
= |H|^2 + \sum_{\gamma=m+1}^n (\tr A_\gamma)^2.
\]
This implies that $|H|^2 \leq n$.  

Noting that the nilsoliton derivation $D = \id_\mfr{n} \in \mfr{a}_0$ is represented by the vector
\[ D = \begin{pmatrix} 1\\ \vdots\\ 1 \end{pmatrix}, \]
the above calculation also gives us the following equivalent statements.
\begin{align*}
|H|^2 = n
&\Longleftrightarrow \tr A_\gamma = 0 \text{ for all } \gamma = m+1,\dots,n \\
&\Longleftrightarrow D \perp A_\gamma \text{ for all } \gamma = m+1,\dots,n \\
&\Longleftrightarrow D \in \mrm{span}\{A_1,\dots,A_m\} = \mfr{a} 
\end{align*}
By Theorem \ref{thm:lauret} (a), this happens if and only if the extension is Einstein.
\end{proof}

\subsection{Curvature}

We want to compute the curvatures of $\mfr{s}$, so recall the formulas from Section \ref{sec:extensions}.  Since $\mfr{n} = \R^n$ is abelian, we have that $\nabla_X^\mfr{n} Y = 0$ for all $X,Y \in \mfr{n}$.  Also recall that $\sigma(U,V) = -\half(\ad_U^* V + \ad_V^* U)$, and that $\mrm{pr}_\mfr{a} \sigma(X,Y)$ is the orthogonal projection of $\sigma(X,Y)$ onto $\mfr{a}$.  Using the above formulas, it is easy to see that
\[ \mrm{pr}_\mfr{a} \sigma(X_i,X_j)
= \sum_\alpha \langle \sigma(X_i,X_j), A_\alpha \rangle A_\alpha 
= \delta_{ij} \sum_{\alpha} a_{\alpha i} A_\alpha. \]
Now, using this and \eqref{eq:riem-s}, we calculate the Riemann curvatures.
\begin{align*}
R_{\alpha\beta\gamma\delta} 
&= 0 \\
R_{i\alpha j\beta}
&= \delta_{ij} a_{\alpha i} a_{\beta i} \\
R_{ijk\alpha}
&= 0 \\
R_{ijk\ell}
&= (\delta_{ik} \delta_{j\ell} - \delta_{i\ell} \delta_{jk}) \sum_{\alpha} a_{\alpha i} a_{\alpha j} 
\end{align*}

Now, let $U$ and $V$ be a pair of orthonormal vectors in $\mfr{s}$.  We will compute the sectional curvature $\sec(U,V)$ in a manner similar to \eqref{eq:Ro-expand}, with similar cancellations.  
First, we introduce the short-hand notation
\[ C_{i}\left(V\right) := \sum_{\alpha}a_{\alpha i}V^{\alpha}, \quad    
   C_{i}\left(U\right) := \sum_{\alpha}a_{\alpha i}U^{\alpha}. \]
Now, sum over all repeated indices.
\begin{align*}
\sec(U,V)
&= R(U,V,V,U) \\
&= R_{IJKL} U^I V^J V^K U^L \\
&\quad + R_{\alpha jk\delta} U^\alpha V^j V^k U^\delta 
 + R_{\alpha j\gamma \ell} U^\alpha V^j V^\gamma U^\ell \\
&= R_{ijk\ell} U^i V^j V^k U^\ell 
 + R_{i\alpha j\beta} U^i V^\alpha V^j U^\beta 
 + R_{i\alpha \beta j} U^i V^\alpha V^\beta U^j \\
&\quad + R_{\alpha ij \beta} U^\alpha V^i V^j U^\beta 
 + R_{\alpha i\beta j} U^\alpha V^i V^\beta U^j \\ 
&= (\delta_{ik} \delta_{j\ell} 
  -\delta_{i\ell} \delta_{jk}) a_{\alpha i} a_{\alpha j} U^i V^j V^k U^\ell 
 + \delta_{ij} a_{\alpha i} a_{\beta i} U^i V^\alpha V^j U^\beta \\ 
&\quad - \delta_{ij} a_{\alpha i} a_{\beta i} U^i V^\alpha V^\beta U^j
 - \delta_{ij} a_{\alpha i} a_{\beta i} U^\alpha V^i V^j U^\beta 
 + \delta_{ij} a_{\alpha i} a_{\beta i} U^\alpha V^i V^\beta U^j \\ 
&= a_{\alpha i} a_{\alpha j} \left( U^i V^j V^i U^j - \left( U^i V^j \right)^2 \right) \\
&\quad + a_{\alpha i} a_{\beta i} \left( 2 U^i V^i V^\alpha U^\beta 
- V^\alpha V^\beta \left( U^i \right)^2 
- U^\alpha U^\beta \left( V^i \right)^2 \right) \\
&= -\sum_\alpha \sum_{i<j} a_{\alpha i} a_{\alpha j} \left(U^i V^j - U^j V^i \right)^2 \\ 
&\quad + 2 \sum_i \left( C_i \left( V \right) U^i \right) \left( C_i \left( U \right) V^i \right) - \left(C_i \left( V \right) U^i \right)^2 - \left( C_i \left( U \right) V^i \right)^2 \\
&= - \sum_\alpha \sum_{i<j} a_{\alpha i} a_{\alpha j} \left( U^i V^j - U^j V^i \right)^2 
   - \sum_i  \left( \left( C_i \left( V \right) U^i \right) - \left(C_i \left( U \right) V^i \right) \right)^2
\end{align*}
We will use this to prove the main result of this section.

\begin{prop}\label{prop:abelian-sec}
There is an open set of strictly linearly stable solvsolitons whose nilradicals have codimension one, are abelian, and have dimension two or greater.
\end{prop}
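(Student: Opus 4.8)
The plan is to exhibit a concrete family of two-step solvsolitons satisfying the sectional curvature hypothesis of Proposition~\ref{prop:sec-stab}, and then use the openness of that condition. Since we want codimension-one nilradical, I would take $m = \dim \mfr{a} = 1$, so $\mfr{s} = \R^n \rtimes \R A$ with $A = \mrm{diag}(a_1,\dots,a_n)$ a single unit-norm diagonal derivation (meaning $\sum_i a_i^2 = 1$). The strategy is to show that for a suitable choice of $A$ the sectional curvature is bounded above by $K \leq 0$ with strict inequality $(n-2)K < \tfrac{1}{2}\tr D$ in the stability criterion, and that this bound persists under small perturbations of $A$ inside $\mfr{a}_0 = \mbb{RP}^{n-1}/S_n$.

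First I would specialize the sectional curvature formula derived just above to the case $m=1$. Writing $C_i(V) = a_i V^A$ and $C_i(U) = a_i U^A$ (where $V^A, U^A$ are the single $\mfr{a}$-components), the two summands become
\[
\sec(U,V) = -\sum_{i<j} a_i a_j (U^i V^j - U^j V^i)^2 - (V^A)^2 \sum_i a_i^2 (U^i)^2 \cdots,
\]
which after simplification should reduce to a manifestly non-positive expression provided all $a_i$ share the same sign (say $a_i > 0$ for all $i$); then each term $a_i a_j (U^iV^j-U^jV^i)^2 \geq 0$ and the second sum is a perfect square. This gives $\sec \leq 0$, i.e.\ $K = 0$ works, but the curvature criterion at $K=0$ only requires $\tr D > 0$, which by Theorem~\ref{thm:lauret} and the nilsoliton structure is automatic (the trace of the soliton derivation is positive). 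So I would actually aim to establish the \emph{strict} negativity needed, or more cleanly, directly verify $(n-2)K < \tfrac12 \tr D$ with the computed $K$.

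Next I would compute $D$ and $\tr D$ explicitly for this family using Lemma~\ref{lem:1-dim-D} and equation~\eqref{eq:der-s}: here $D|_\mfr{n} = D_\mfr{n} - \ad_H$ with $D_\mfr{n} = \id_\mfr{n}$ (since $\mfr{n}$ is abelian flat with $\lambda = -1$), and $\ad_H$ diagonal with entries $(\tr A)\, a_i$. Thus $D = \id - (\tr A)\,\ad_A$ on $\mfr{n}$, and $\tr D = n - (\tr A)^2 \cdot\,$(trace factor), which by Lemma~\ref{lem:abelian-norm-H} is controlled since $|H|^2 = (\tr A)^2 \leq n$. I would then pick $A$ so that the $a_i$ are all positive and not all equal (to stay away from the Einstein locus where $m$ would effectively be the full rank), guaranteeing $\tr D > 0$ and $\sec \leq 0$, so that the criterion $(n-2)K < \tfrac12\tr D$ holds with $K=0$.

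The main obstacle, and the step requiring the most care, will be verifying the \emph{openness}: that the curvature bound $\sec \leq 0$ (or more precisely the strict inequality in the stability criterion) survives a perturbation of $A$ within $\Gr_1(\mfr{a}_0) = \mbb{RP}^{n-1}$. Having all $a_i > 0$ is an open condition on the line $\R A \subset \mfr{a}_0$, and both $\sec(U,V)$ and $\tr D$ depend continuously (indeed smoothly) on the entries $a_i$, as already noted in the discussion preceding Proposition~\ref{prop:sol-space} and in Subsection~\ref{subsec:solv-curv}. Since the open positivity of the $a_i$ forces $K=0$ and $\tr D > 0$ throughout a neighborhood, Proposition~\ref{prop:sec-stab} yields strict linear stability on that whole open set in $\Gr_1(\R^n)/S_n$, completing the proof. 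The one subtlety to watch is ensuring the perturbed metrics remain genuinely codimension-one solvsoliton extensions (not degenerating to the nilsoliton $m=0$ or jumping dimension), which is guaranteed as long as we perturb within $\Gr_1$ and keep $A \neq 0$.
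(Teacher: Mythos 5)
Your proposal is correct and follows essentially the same route as the paper: specialize the sectional curvature formula to $m=1$, observe that a common sign for the $a_i$ forces $\sec\leq 0$, take $K=0$ so that Proposition \ref{prop:sec-stab} reduces to $\tr D = n - |H|^2 > 0$ (Lemma \ref{lem:abelian-norm-H}), and note that the sign condition is open in $\Gr_1(\mfr{a}_0)/S_n$. The only cosmetic difference is that you exclude the single Einstein point (all $a_i$ equal, where $\tr D=0$) from your open set, whereas the paper keeps it by observing that $H^{n+1}$ has strictly negative curvature; both yield the claimed open set.
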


\begin{proof}
When $m=1$, the above curvature formula reduces to 
\[ \sec(U,V)
= - \sum_{i<j} a_i a_j (U^i V^j - U^j V^i)^2 - \sum_i a_i^2 (U^i V^0 - U^0 V^i)^2, \]
where $0$ refers to the $\mfr{a}$-component of the vectors.  Clearly, if 
\begin{equation}\label{eq:abelian-cones}
a_i \leq 0 \text{ for all } i, \qquad \text{ or } a_i \geq 0 \text{ for all } i,
\end{equation}
then the sectional curvature is non-positive.  

Since $A = \mrm{diag}(a_1,\dots,a_n)$ satisfies $\sum_i a_i^2 =1$, we may think of $A$ as an element of the unit sphere $S^{n-1}$.  The set of $A$ satisfying \eqref{eq:abelian-cones} contains an open set on $S^{n-1}$.  As mentioned above, the moduli space of non-isometric solvsolitons is parametrized by $\mbb{RP}^{n-1}/S_n$.  So, after taking the quotient of $S^{n-1}$ by several finite groups, we still have an open set of non-positively curved solvsolitons in the moduli space.  From Theorem \ref{thm:lauret}, the soliton derivations satisfy $\tr D = n - |H|^2$, which is positive in the non-Einstein case by Lemma \ref{lem:abelian-norm-H}.  The single Einstein metric has negative curvature; see the next subsection.  Therefore, each metric in this set is stable by Proposition \ref{prop:sec-stab}.
\end{proof}

\subsection{Examples}\label{subsec:abelian-einstein}

Here we describe some properties of two-step solvable Einstein metrics.  Of particular interest are examples of metrics for which both Corollary \ref{cor:alg-stab-cond} and Proposition \ref{prop:sec-stab} fail to give stability.  Let $\dim \mfr{a} = m$, with $1 \leq m \leq n$.  The solvsoliton extension of $\R^n$ is determined by $\mfr{a}$, independent of any basis.  In the Einstein case, we can always pick a basis of $\mfr{a}$ such that $A_1 = D/\sqrt{n} = \id_\mfr{n}/\sqrt{n}$ (normalized as in Lemma \ref{lem:1-dim-D}), and $\tr A_\alpha = 0$ for $\alpha = 2,\dots,m$.  

\begin{ex}
When $m=1$, the extension is real hyperbolic space $H^{n+1}$ and the matrix we adjoin is $A_1 = \id_\mfr{n}/\sqrt{n}$.  Taking this a bit further, we see that every two-step solvable Einstein metric is an extension of an abelian algebra by a hyperbolic space.  Indeed, 
\[ \mfr{h}^{n+1} 
:= \mrm{Lie}(H^{n+1}) 
= \mrm{span}\{X_1,\dots,X_n,A_1\} \]
is an ideal in $\R^n \rtimes \mfr{a}$, so we have a short exact sequence:
\[ 0 
\longrightarrow 
\mfr{h}^{n+1} 
\longrightarrow 
\R^n \rtimes \mfr{a} 
\longrightarrow 
\mrm{span}\{A_2,\dots,A_m\}
\longrightarrow 
0. \]

When $n>1$, the hyperbolic space $H^{n+1}$ has constant negative curvature, so it is stable by Proposition \ref{prop:sec-stab}.  When $n=1$, one can check that $H^2$ is only weakly linearly stable.  Compact quotients of this space are dynamically stable, however, and proving this requires more detailed analysis of the null-space of the operator in \ref{eq:cnrf-lin}; see \cite[Lemma 5]{Knopf2009}.

Also, by \cite[Theorem 3]{Heintze1974}, a metric with strictly negative curvature must have $\dim \mfr{a} = 1$, so hyperbolic spaces are the only two-step solvable Einstein metrics with strictly negative curvature.
\end{ex}

\begin{ex}\label{ex:hyp-prod}
When $m=n$, it is convenient to represent the basis of $\mfr{a}$ as a matrix as in the proof of Lemma \ref{lem:abelian-norm-H}.  In this case we take $A := I_n$.  It is not hard to see that the resulting $2n$-dimensional space is isometric to the $n$-fold Riemannian product
\[ H^2 \times \cdots \times H^2. \]
This space is Einstein with $\lambda = -1$.  One can compute the eigenvalues of $\Ro$ and see that the largest is in fact $1$, so we only have weak linear stability, as with $H^2$.

Note that by \cite[Proposition 2.1]{Wolter1991}, any two-step solvable Einstein metric that has non-positive sectional curvature will be a product of $m$ real hyperbolic spaces.
\end{ex}

\begin{ex}
Let us consider two more concrete metrics.  Let $n=4$ and $m=3$, and consider the following matrix $A$ describing the basis for $\mfr{a}$ (again as in the proof of Lemma \ref{lem:abelian-norm-H}).
\[ A := \frac{1}{\sqrt{12}}
\begin{pmatrix}
 \sqrt{3} & 2 \sqrt{2} & 0 \\
 \sqrt{3} & -\sqrt{2} & \sqrt{6} \\
 \sqrt{3} & -\sqrt{2} & -\sqrt{6} \\
 \sqrt{3} & 0 & 0
\end{pmatrix} \]
The resulting Einstein metric does not satisfy the conditions of either Corollary \ref{cor:alg-stab-cond} or Proposition \ref{prop:sec-stab}.  Indeed, using \eqref{eq:Ro-expand} one can see that
\[ h := \mrm{diag}(-\smfrac{9}{8},-1,-1,1,0,1,1)
\quad \Longrightarrow \quad
\frac{\langle \Ro h,h \rangle}{\langle h,h \rangle} = \frac{1204}{1203} > 1 = -\lambda. \]
One can also see easily that the sectional curvatures have mixed sign.

We note that taking $A$ to be only the first two columns yields a soliton that is stable according to Corollary \ref{cor:alg-stab-cond}.  Thinking of the soliton above as a 1-dimensional extension of this one, we see that a converse of Proposition \ref{prop:ext-stab} cannot always hold, at least in the non-nilpotent case.

Next, let
\[ A := \frac{1}{\sqrt{3}} 
\begin{pmatrix} 0 & 1 & 1 \\
 1 & 0 & 1 \\
 1 & -1 & 0 \\
 1 & 1 & -1
\end{pmatrix}. \]
This gives a non-Einstein solvsoliton with $\lambda = -1$ and 
\begin{align*}
\Ric &= -\frac{1}{3} \mrm{diag}\left(2,4,2,3,3,3,3\right) \\
D    &= \frac{1}{3} \mrm{diag}\left(1,-1,1,0,0,0,0\right) 
\end{align*}
so that $\tr D = 1/3$.  Again, the condition of Corollary \ref{cor:alg-stab-cond} is not satisfied, since
\[ h := 
\begin{pmatrix}
 8 &   &   &   &   &   &   \\
   & -4 &   &   &   &   &   \\
   &   & 8 &   &   &   &   \\
   &   &   & -3 &   &   &   \\
   &   &   &   & 1 & 3 &   \\
   &   &   &   & 3 & -3 & -4 \\
   &   &   &   &   & -4 & 1
\end{pmatrix}
\quad \Longrightarrow \quad
\frac{Q(h)}{\langle h,h \rangle} = \frac{18}{107} > \frac{1}{6} = \frac{1}{2} \tr D.
\]
It is also easy to see that the sectional curvature has mixed sign, so that Proposition \ref{prop:sec-stab} does not apply.

Taking any 1-dimensional extension of this example will yield an Einstein metric as in Example \ref{ex:hyp-prod}, meaning it is possible to ``improve'' the spectrum of $Q$ under extensions.
\end{ex}

\section{Two-step nilsolitons}\label{sec:2step}

In this section, we prove that all two-step nilsolitons are strictly linearly stable.  The idea is to use Corollary \ref{cor:alg-stab-cond} and extra information from the two-step nilpotent structure.

\subsection{Construction}

Let $\mfr{n}$ be a two-step nilpotent Lie algebra.  Suppose that $\mfr{z} := [\mfr{n},\mfr{n}]$ has dimension $p$, and let $\mfr{v}$ be a complementary subspace of dimension $q$, so that $\mfr{n} = \mfr{v} \oplus \mfr{z}$ and $n := \dim \mfr{n} = p+q$.  The dimensions $p$ and $q$ must satisfy
\[ 1 \leq p \leq \frac{1}{2} q (q-1) = \dim \mfr{so}(\R,q). \] 

Suppose that $\mfr{n}$ has an inner product $\langle \cdot,\cdot \rangle$ such that $\mfr{v} \perp \mfr{z}$.  We obtain a homomorphism of $\mbb{R}$-algebras, $J : \mfr{z} \rightarrow \End(\mfr{v})$, which is defined implicitly by the relation
\begin{equation}\label{eq:j-map-def}
\langle J_Z(U),V \rangle = \langle Z,[U,V] \rangle 
\end{equation}
for all $U,V \in \mfr{v}$ and $Z \in \mfr{z}$.  Each map $J_Z \in \End(\mfr{v})$ is therefore skew-symmetric.  The map $J : \mfr{z} \rightarrow \End(\mfr{v})$ is called the \textit{J-map}.

\begin{ex}\label{ex:heis-def}
We call a two-step nilpotent metric Lie algebra $\mfr{n}$ a \textit{generalized Heisenberg algebra} if
\begin{equation}\label{eq:j-map-sq}
J_Z^2 = -|Z|^2 \, \id_\mfr{v}
\end{equation}
for all $Z \in \mfr{z}$, or equivalently if
\[ J_Y J_Z + J_Z J_Y = -2 \langle Y,Z \rangle \, \id_\mfr{v} \]
for all $Y,Z \in \mfr{z}$.  The unique simply connected Lie group $\mcl{N}$ with Lie algebra $\mfr{n}$ is then called a \textit{generalized Heisenberg group}.  These spaces were introduced by A.~Kaplan \cite{Kaplan1980,Kaplan1981}; see also \cite{Berndt1995}.
 
If $\eta$ is the quadratic form on $\mfr{z}$ induced by the inner product (that is, $\eta(Z) := |Z|^2$), then condition \eqref{eq:j-map-sq} implies that $J$ extends to a representation of the Clifford algebra $\mrm{Cl}(\mfr{z},\eta)$ on $\mfr{v}$.  It is not hard to see that the converse is true, namely, that every representation of a Clifford algebra corresponds to a generalized Heisenberg algebra.  The classification of Clifford algebras therefore provides a classification of generalized Heisenberg algebras.
\end{ex}

\begin{ex}\label{ex:free-2step-def}
Let $\mfr{g}$ be a semi-simple Lie algebra, and let $J : \mfr{g} \rightarrow \mfr{gl}(q,\R)$ be a faithful representation.  Then we can give $\mfr{n} := \R^q \oplus \mfr{g}$ the structure of a two-step nilpotent metric Lie algebra as follows.  First, give $\R^q$ an inner product such that each $J_X$ is skew-symmetric, give $\mfr{g}$ an $\ad(\mfr{g})$-invariant inner product, and declare that the summands $\R^q$ and $\mfr{g}$ are orthogonal.  Now, the bracket structure is determined as in equation \eqref{eq:j-map-def}.

A special case is the usual representation of $\mfr{so}(q,\R)$ on $\R^q$.  We call the resulting algebra the  \textit{free two-step nilpotent Lie algebra on $q$ generators} and write $\mfr{F}_2(q) := \R^q \oplus \mfr{so}(q,\R)$.  This means $p = \dim \mfr{so}(q,\R) = \half q(q-1)$.
\end{ex}

\begin{ex}
The Ricci tensor of a two-step nilpotent metric Lie algebra $\mfr{n}=\mfr{v} \oplus \mfr{z}$ is \textit{optimal} (in the sense of \cite{Eberlein2008}) if 
\[ \Rc|_{\mfr{v}\times\mfr{v}} = - a \, \id_\mfr{v}, \quad 
   \Rc|_{\mfr{v}\times\mfr{z}} = \Ric|_{\mfr{z}\times\mfr{v}} = 0, \quad
   \Rc|_{\mfr{z}\times\mfr{z}} = b \, \id_\mfr{z}, \]
for some $a,b > 0$.  It is easy to see that metrics with optimal Ricci tensors are nilsolitons.  In fact, using the remarks following Propositions 7.3 and 7.4 in \cite{Eberlein2008}, the constant $a$ can be eliminated:
\[ \Ric = \frac{b}{q} \left(
\begin{BMAT}{ccc.ccc}{ccc.ccc}
 &                     & & &                  &  \\
 & - 2p \, \id_\mfr{v} & & &                  &  \\
 &                     & & &                  &  \\
 &                     & & &                  &  \\
 &                     & & & q \, \id_\mfr{z} &  \\
 &                     & & &                  &  \\
\end{BMAT} \right),
\quad
D = \frac{b}{q} (2p+q) \left(
\begin{BMAT}{ccc.ccc}{ccc.ccc}
 &            & & &                 &  \\
 &  \, \id_\mfr{v}     & & &                 &  \\
 &            & & &                 &  \\
 &            & & &                 &  \\
 &            & & & 2 \, \id_\mfr{z} &  \\
 &            & & &                 &  \\
\end{BMAT} \right)\]
and $\lambda = -\frac{b}{q}(4p+q)$.

The nilsoliton metrics on generalized Heisenberg algebras and free two-step nilpotent algebras all have optimal Ricci tensors; see Subsections \ref{subsec:3step-heis} and \ref{subsec:3step-free} for more information.  It turns out that the property of having optimal Ricci tensor is equivalent to being a nilsoliton with eigenvalue type $(1,2)$, which means that the eigenvalues of the nilsoliton derivation are $1$ and $2$ (perhaps after rescaling).  This property is ``generic'' among two-step nilpotent metric Lie algebras in a specific sense; see \cite[Proposition 7.9]{Eberlein2008} or \cite[Theorem 4]{Nikolayevsky2011}. 
\end{ex}

\subsection{Curvature}

We use the results of Eberlein \cite{Eberlein1994} to describe the Riemann curvature tensor on a two-step nilpotent Lie group $\mfr{n} = \mfr{v} \oplus \mfr{z}$.  Let 
\[ \{U_1,\dots,U_n,Z_1,\dots,Z_m \} \]
be an orthonormal basis for $\mfr{n}$, with $U_i \in \mfr{v}$ and $Z_\alpha \in \mfr{z}$.  Now we will use Roman indices for elements of $\mfr{n}$ and Greek indices for elements of $\mfr{z}$.  The (non-zero) Riemann, sectional, and Ricci curvatures are 
\begin{align*}
R(U,V,W,W') &= \half \langle [U,V],[W,W'] \rangle 
               +\smfrac{1}{4} \langle [U,W],[V,W'] \rangle 
               -\smfrac{1}{4} \langle [U,W'],[V,W] \rangle \\
R(U,V,X,Y)  &= \smfrac{1}{4} \langle J_X U, J_Y V \rangle
               - \smfrac{1}{4} \langle J_X V, J_Y U \rangle \\
R(U,X,V,Y) &= -\smfrac{1}{4} \langle J_X V, J_Y U \rangle 
\end{align*}
\begin{align*}
\sec(U,V) &= -\smfrac{3}{4} |[U,V]|^2 & & & \Rc(U,V) &= \smfrac{1}{2} \sum_\alpha \langle J_\alpha^2 U,V \rangle \\
\sec(U,X) &= \smfrac{1}{4} |J_X U|^2  & & & \Rc(X,Y) &= - \smfrac{1}{4} \tr (J_X \circ J_Y)
\end{align*}
for all $U,V,W,W' \in \mfr{v}$ and $X,Y \in \mfr{z}$.  In terms of structure constants, the Ricci curvatures are
\[ R_{ij} = \smfrac{1}{2} \sum_{k\alpha} c_{ik}^\alpha c_{kj}^\alpha, \quad
   R_{\alpha\beta} = \smfrac{1}{4} \sum_{ij} c_{ij}^\alpha c_{ij}^\beta. \]
and, recalling that $\Ric$ is negative-definite on $\mfr{v}$ and positive-definite on $\mfr{z}$, we define for $V \in \mfr{v}$ and $Z \in \mfr{z}$,
\[ \rho_{-} := - \min_{|V|=1} \Rc(V,V), \quad
   \rho_{+} :=   \max_{|Z|=1} \Rc(Z,Z). \]
 
\begin{lem}\label{lem:2step-ric-est}
When $\mfr{n}$ is a two-step nilsoliton of type $(p,q)$, rescaled so that $\scal = -1$, we have
\begin{align}
\rho_-,\rho_+ &\leq |\Ric|^2 \label{eq:ric-evals} \\
|\Ric|^2 &\geq \frac{1}{p} + \frac{4}{q}. \label{eq:norm-ric-est}
\end{align}
\end{lem}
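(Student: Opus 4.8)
The plan is to diagonalise $\Ric$ in an orthonormal basis adapted to $\mfr{n} = \mfr{v}\oplus\mfr{z}$, writing its eigenvalues as $-a_1,\dots,-a_q$ on $\mfr{v}$ (so each $a_i\ge 0$) and $b_1,\dots,b_p$ on $\mfr{z}$ (so each $b_j>0$). The first observation is that the two trace sums are forced by the normalization alone, with no reference to the soliton condition. Setting $S := \sum_\alpha \tr(J_\alpha^* J_\alpha)$, the formulas $\Rc(U,V) = \half\sum_\alpha\langle J_\alpha^2 U,V\rangle$ and $\Rc(X,Y) = -\smfrac14\tr(J_X\circ J_Y)$ give $\sum_i a_i = \half S$, $\sum_j b_j = \smfrac14 S$, and $\scal = -\smfrac14 S$; hence $\scal=-1$ yields $\sum_i a_i = 2$ and $\sum_j b_j = 1$. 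Inequality \eqref{eq:norm-ric-est} is then immediate from Cauchy--Schwarz: since $|\Ric|^2 = \sum_i a_i^2 + \sum_j b_j^2$ with $q$ terms $a_i$ and $p$ terms $b_j$, we get $\sum_i a_i^2 \ge (\sum_i a_i)^2/q = \smfrac4q$ and $\sum_j b_j^2 \ge (\sum_j b_j)^2/p = \smfrac1p$.

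For \eqref{eq:ric-evals} the engine is the soliton identity $|\Ric|^2 = -\lambda$, which I would derive from the fact that $\Ric$ is orthogonal to every symmetric derivation. Writing $\Ric$ in structure constants and applying the derivation rule $\psi[e_i,e_j] = [\psi e_i,e_j] + [e_i,\psi e_j]$, the two terms of $\Ric$ contribute $-\half P$ and $+\half P$ to $\tr(\Ric\,\psi)$ for one and the same quantity $P = \sum_{i,\ell}\langle[\psi e_\ell,e_i],[e_\ell,e_i]\rangle$, so that $\tr(\Ric\,\psi)=0$. Taking $\psi = D = \Ric - \lambda\,\id$ (symmetric, since $\Ric$ is) and using $\scal=-1$ then gives $|\Ric|^2 = \tr(\Ric^2) = \lambda\,\scal + \tr(\Ric\,D) = -\lambda$.

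It remains to bound the extreme eigenvalues of $\Ric = \lambda\,\id + D$ by $-\lambda$. Here I would invoke the positivity of the nilsoliton derivation $D$ (a structural result of Lauret and Heber): every eigenvalue of $\Ric$ then strictly exceeds $\lambda$, so the most negative eigenvalue, which occurs on $\mfr{v}$, has absolute value $\rho_{-} < -\lambda = |\Ric|^2$. For $\rho_{+}$ I would use that $D$ is a derivation preserving the splitting together with $\mfr{z} = [\mfr{v},\mfr{v}]$: a bracket of two $D|_{\mfr{v}}$-eigenvectors is a $D|_{\mfr{z}}$-eigenvector whose eigenvalue is the sum of the two, so $\max_{\mfr{z}} d \le 2\max_{\mfr{v}} d$; since each $\mfr{v}$-eigenvalue $\lambda + d$ of $\Ric$ is $\le 0$ we have $\max_{\mfr{v}} d \le -\lambda$, whence $\rho_{+} = \lambda + \max_{\mfr{z}} d \le \lambda + 2(-\lambda) = -\lambda = |\Ric|^2$.

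The main obstacle is isolating the soliton-specific input: \eqref{eq:norm-ric-est} and the trace normalization are soft and hold for any two-step nilpotent metric Lie algebra with $\scal=-1$, whereas \eqref{eq:ric-evals} rests entirely on the identity $|\Ric|^2=-\lambda$ and on the positivity of $D$. I expect the orthogonality computation $\tr(\Ric\,\psi)=0$ to require the most careful bookkeeping---in particular the index relabelling that forces the two curvature contributions to cancel---and I would verify the sign conventions in the structure-constant form of $\Ric$ before trusting the final cancellation.
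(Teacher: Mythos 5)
Your proof is correct, but it reaches the two inequalities by a partly different route than the paper, which runs everything through the variety of brackets $V=\wedge^2(\R^n)^*\otimes\R^n$, the moment map $m(\mu)=4\Ric_\mu$, and the weights $\alpha_{ij}^k=-E_{ii}-E_{jj}+E_{kk}$. For \eqref{eq:norm-ric-est} your argument is genuinely more elementary: the trace identities $\tr\Rc|_{\mfr{v}}=-2$, $\tr\Rc|_{\mfr{z}}=1$ under $\scal=-1$ follow from the $J$-map formulas alone, and Cauchy--Schwarz then gives $|\Ric|^2\ge 4/q+1/p$ for \emph{every} two-step nilpotent metric Lie algebra with $\scal=-1$, soliton or not --- the paper instead identifies $\Ric_\mu$ as the minimal convex combination of the weights with $\mu_{ij}^k\ne0$ and bounds its norm below by the minimal convex combination over all admissible weights (which, unsurprisingly, lands on the same uniform diagonal matrix your equality case produces). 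Your derivation of $|\Ric|^2=-\lambda$ from $\tr(\Ric\,\psi)=0$ for symmetric derivations $\psi$, applied to $\psi=D=\Ric-\lambda\,\id$, is the structure-constant translation of the paper's computation $16|\Ric_\mu|^2=\ip{4\pi(\Ric_\mu)\mu,\mu}=-4\lambda|\mu|^2$; the sign bookkeeping you flag does check out (the two terms of $\Ric$ contribute $-\tfrac12 P$ and $+\tfrac12 P$ as you claim). For \eqref{eq:ric-evals} the substance is the same as the paper's: your additivity of $D$-eigenvalues across brackets is exactly the weight relation $\ip{\Ric_\mu,\alpha_{ij}^k}=|\Ric_\mu|^2$, i.e.\ $a_i+a_j+b_k=-\lambda$. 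One remark: you do not actually need to import the positivity of the nilsoliton derivation as an external theorem for the $\rho_-$ bound; reading that same relation in the other direction, $a_i=-\lambda-a_j-b_k\le-\lambda$ since $a_j\ge0$ and $b_k>0$ (and $a_i=0$ for any $\mfr{v}$-direction appearing in no bracket), which is how the paper argues and keeps the lemma self-contained. What your packaging buys is the observation that \eqref{eq:norm-ric-est} is soft and the soliton hypothesis enters only through $|\Ric|^2=-\lambda$ and the eigenvalue additivity; what the paper's formalism buys is that the same weight machinery delivers both inequalities uniformly and generalizes beyond the two-step case.
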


\begin{proof}
Let $G$ be a simply-connected Lie group with a left-invariant metric.  As the group acts by isometries on itself, all of the geometry is encoded in a single tangent space with inner product.  We take the Lie algebra $\mathfrak g = \Lie \  G \simeq T_eG$ of $G$ to be our tangent space of interest and write it as $(\mathbb R^n, \mu)$, where $\mu$ denotes the Lie bracket on the underlying vector space $\mathbb R^n$.  We denote the inner product on this vector space by $\ip{\cdot,\cdot}$.  (We choose our identification of $\mathfrak g$ with  $\mathbb R^n$ so that the `standard basis' is orthonormal relative to our fixed inner product on $\mathfrak g$.)

We study $G$ and $\mathfrak g$ by viewing the Lie bracket $\mu$ as an element of the space
\[ V := \wedge ^2 (\mathbb R^n)^*\otimes \mathbb R^n \]
 of (anti-symmetric) algebra structures on $\mathbb R^n$. This perspective has proven very effective for working with nilsolitons, cf.~\cite{Lauret2010-std,Lauret2011-sol,Jablonski2011-existence}.
 For the sake of consistency and ease of referencing, we hold closely  to the notation used by Lauret.

Take $\{e_i\}$ to be the standard basis of $\mathbb R^n$ and denote by $e_i'$ the element of $(\mathbb R^n)^*$ which is dual to $e_i$.  Then
\[ \{ v_{ijk} := (e_i' \wedge e_j')\otimes e_k \ | \ 1 \leq i<j\leq n, 1\leq k\leq n \} \]
is a basis  for $V$.  The element $v_{ijk}$ is the element such that, when evaluated on basis vectors of $\mathbb R^n$, we have $v_{ijk}(e_i,e_j) = e_k = - v_{ijk}(e_j,e_i)$ and zero otherwise.  We  define an inner product on $V$ such that the above basis is orthonormal.  

Recall that the vector space $V$ comes naturally equipped with an action $\pi$ of $\mathfrak{gl}(n,\mathbb R)$ defined by 
\[ (\pi(X) \mu )(v,w) = X\mu(v,w) - \mu(Xv,w) - \mu(v,Xw) \]
where $X\in \mathfrak{gl}(n,\mathbb R)$, $\mu\in V$, and $v,w\in\mathbb R^n$.  As we will see, the basis $\{v_{ijk}\}$ of $V$ actually consists of weight vectors for this representation.

In the following, we equip $\mathfrak{gl}(n,\mathbb R)$ with the inner product $\ip{\alpha,\beta} := \tr  \alpha\beta^*$.  Then for a diagonal matrix $\alpha = \mrm{diag}(a_1,\dots, a_n)$, we have
	\begin{equation}\label{eqn: weights of the rep}
	\pi (\alpha) v_{ijk} = (-a_i-a_j+a_k) v_{ijk} = \ip{\alpha, \alpha_{ij}^k} v_{ijk}
	\end{equation}
where $\alpha_{ij}^k := -E_{ii}-E_{jj} + E_{kk}$ and $E_{ii}$ is the matrix with a 1 in the $i^{th}$ position along the diagonal and zero elsewhere.  Thus the diagonal matrices $\alpha_{ij}^k$ are the weights of the representation (relative to the inner product that we have fixed on $\mathfrak{gl}(n,\mathbb R)$).

For a given element $\mu \in V$, we write $\mu = \sum \mu_{ij}^k v_{ijk}$.  When $\mu$ is a Lie algebra structure on $\mathbb R^n$, we see that the $\mu_{ij}^k$ are the usual structure constants which satisfy $\mu(e_i,e_j) = \sum_k \mu_{ij}^k e_k$.

Let $\mu \in V$ be a nilpotent Lie algebra structure on $\mathbb R^n$.  As discussed above, the metric Lie algebra $(\mathbb R^n,\mu, \langle \cdot,\cdot \rangle )$ corresponds to a simply-connected Lie group with left-invariant metric which we denote by $G_\mu$.  Denote the Ricci and scalar curvatures of $G_\mu$ by $\Ric_\mu$ and $\scal(\mu)$, respectively.  In terms of the norm on $V$ given above, we have
\[ \scal(\mu) = -\frac{1}{4} |\mu|^2 \]
and, further,
\[ m(\mu) = 4 \Ric_\mu \]
where $m: V \to \mathfrak{gl}(n,\mathbb R)$ is the moment map of the representation, which is defined by
\[ \ip{m(\mu),X} = \ip{\pi(X)\mu,\mu} \]
for all $X\in \mathfrak{gl}(n,\mathbb R)$. 

Now let $G_\mu$ be a nilsoliton, so that $\Ric_\mu = \lambda \, \id +D$ for some $\lambda\in\mathbb R$ and $D$ a derivation of the Lie algebra $(\mathbb R^n,\mu)$.  First observe that $\mu$ is an eigenvector of $\Ric_\mu$ with eigenvalue $-\lambda$.  (This follows from the fact that $\pi(D)\mu = 0$, as $D$ is a derivation, and $\pi(\id)$ acts by $-1$.)  We may assume that the Ricci tensor is diagonal with respect to $\{e_i\}$ and, encoding the previous statement using the weights of our representation, we have
\[ \ip{\Ric_\mu, \alpha_{ij}^k} = -\lambda \quad \mbox{ for } (i,j,k) \mbox{ such that } \mu_{ij}^k \not = 0 \]
Now applying the definition of the moment map given above, we also see that
\begin{align*}
16 |\Ric_\mu|^2 
&= \ip{4 \pi(\Ric_\mu) \mu,\mu}\\
&= \ip{4\lambda \pi(\id)\mu,\mu }\\
&= -4\lambda |\mu|^2
\end{align*}
from which we obtain $-\lambda = \frac{4 |\Ric_\mu |^2}{ |\mu|^2}$.  If we normalize so that $\scal(\mu) = -1$, we see that $|\mu|^2=4$ and so $-\lambda = |\Ric_\mu|^2$.  This yields
\[ \ip{\Ric_\mu, \alpha_{ij}^k} = |\Ric_\mu|^2   \quad \mbox{ for } (i,j,k) \mbox{ such that } \mu_{ij}^k \not = 0 \]

Finally we consider the case of interest, namely when $\mu$ is 2-step nilpotent.  From \cite{Eberlein1994}, we know $\Ric_\mu$ preserves the commutator subalgebra $\mu(\mathbb R^n,\mathbb R^n)$ (and also its orthogonal complement).  Furthermore,  the eigenvalues of $\Ric_\mu$ on the commutator subalgebra $\mu(\mathbb R^n,\mathbb R^n)$ are positive and non-positive on its orthogonal compliment.  Applying \eqref{eqn: weights of the rep} we see that 
\[ |\rho| \leq |\Ric_\mu|^2 \]
for any eigenvalue $\rho$ of $\Ric_\mu$ and so we have \eqref{eq:ric-evals}.

For \eqref{eq:norm-ric-est}, recall that for any nilsoliton, we have $\ip{\Ric_\mu , \alpha_{ij}^k }  = |\Ric_\mu|^2 $ for $(i,j,k)$ such that $\mu_{ij}^k \not =0$.  As the $\alpha_{ij}^k$ are all of the same length, this implies that $\Ric_\mu$ is the minimal convex combination of $\{ \alpha_{ij}^k \ | \ \mu_{ij}^k \not = 0\}$, i.e. the element of minimal length in the convex combination.

From this, we see that $|\Ric_\mu|^2$ is greater than or equal to the minimal convex combination of all the $\alpha_{ij}^k$ such that $1\leq i,j\leq q$ and $q+1 \leq k \leq n=q+p$.  We denote said minimal convex combination by $\alpha$ and observe that it satisfies
\begin{equation}\label{eqn:mcc}
	\ip{\alpha,\alpha_{ij}^k } = |\alpha|^2
\end{equation}
for all $(i,j,k)$ such that $1\leq i,j\leq q$ and $q+1 \leq k \leq n=q+p$.  By fixing any two of $i,j,k$ and varying the third, and applying \eqref{eqn: weights of the rep}, we see that
\[ \alpha = \mrm{diag} ( \underbrace{ a, \dots, a}_{q}, \underbrace{ b, \dots b}_{p}) \]
Now \eqref{eqn:mcc} implies
\[ qa^2 + pb^2 = -2a+b \]
As $\alpha$ is a convex combination of the $\alpha_{ij}^k$, and each $\alpha_{ij}^k$ has trace $-1$, we have
\[ -1 = qa+pb \]
From these two equations, we see that 
\[ b = \frac{q+\sqrt{q^2+4np}}{2np}
= \frac{q+\sqrt{(2p+q)^2}}{2np}
= \frac{1}{p} \]
and thus $-a = 2/q$.  This means
\[ |\Ric_\mu|^2 
\geq |\alpha|^2 
= -2a+b 
= \frac{4}{q} + \frac{1}{p}. \qedhere \]
\end{proof}

\subsection{Estimates}   
   
Our goal is to estimate the eigenvalues of the operator $Q(h) = \langle \Ro h + \Ric \circ h,h \rangle$.  Namely, according to Corollary \ref{cor:alg-stab-cond}, we want to show that $Q(h) < \half \tr D |h|^2$, where $h$ is any symmetric 2-tensor.  These tensors have the same form as in \eqref{eq:h-decomp}.

For $Q$, we will consider the Riemann and Ricci parts separately.  A computation similar to \eqref{eq:Ro-expand} shows that
\[ \langle \Ro h,h \rangle 
= \sum_{ijk\ell} R_{ijk\ell} h_{i\ell} h_{jk}
 + 2 \sum_{ij\alpha\beta} (R_{ij\alpha\beta}+R_{i\alpha j\beta}) h_{i\beta} h_{j\alpha} 
 - 2 \sum_{ij\alpha\beta} R_{i\alpha j\beta} h_{ij} h_{\alpha\beta}. \]
Assume that the orthonormal basis of $\mfr{n}$ is chosen so that the variation $h$ is diagonal on $\mfr{v} \times \mfr{v}$ and on $\mfr{z} \times \mfr{z}$. (We cannot assume that $h$ has any special form on $\mfr{v} \times \mfr{z}$.)  Then the first term is
\begin{equation}\label{eq:2step-ijkl-est}
\begin{aligned}
\sum_{ijk\ell} R_{ijk\ell} h_{i\ell} h_{jk}
&= \sum_{ij} R_{ijji} h_{ii} h_{jj} 
\leq \smfrac{3}{4} \sum_{i} h_{ii}^2 \sum_j |[U_i,U_j]|^2 \\
&= \smfrac{3}{4} \sum_{i} h_{ii}^2 \sum_{j\alpha} (c_{ij}^\alpha)^2 
= \smfrac{3}{4} \sum_{i} h_{ii}^2 \, 2 |R_{ii}| \\
&\leq \smfrac{3}{2} \rho_{-} \sum_{i} h_{ii}^2
\end{aligned}
\end{equation}
where the first inequality uses the arithmetic/geometric mean inequality.  The middle term is
\begin{equation}\label{eq:2step-mixed-est}
\begin{aligned}
2 \sum_{ij\alpha\beta} (R_{ij\alpha\beta} + R_{i\alpha j\beta}) h_{i\beta} h_{j\alpha} 
&= \sum_{ij\alpha\beta} \left( -\langle J_\beta U_i, J_\alpha U_j \rangle  + \half \langle J_\alpha U_i, J_\beta U_j \rangle \right) h_{i\beta} h_{j\alpha} \\
&= - \sum_{ijk\alpha\beta} c_{ik}^\beta c_{jk}^\alpha h_{i\beta} h_{j\alpha}
 + \half \sum_{ijk\alpha\beta} c_{ik}^\alpha c_{jk}^\beta h_{i\beta} h_{j\alpha} \\
&\leq - \sum_{k} \sum_{i\beta} c_{ik}^\beta h_{i\beta} \sum_{j\alpha} c_{jk}^\alpha  h_{j\alpha}
 + \smfrac{1}{4} \sum_{ijk\alpha\beta} |c_{ik}^\alpha c_{jk}^\beta| ( h_{i\beta}^2 + h_{j\alpha}^2) \\   &= - \sum_{k} \left( \sum_{i\beta} c_{ik}^\beta h_{i\beta} \right)^2 
 + \smfrac{1}{2} \sum_{ijk\alpha\beta} |c_{ik}^\alpha c_{jk}^\beta| h_{i\beta}^2 \\   
&\leq \smfrac{1}{4} \sum_{i\beta} h_{i\beta}^2 \sum_{jk\alpha} \big( (c_{ik}^\alpha)^2 + (c_{jk}^\beta)^2 \big) \\   
&= \smfrac{1}{4} \sum_{i\beta} h_{i\beta}^2 \left( \sum_{j} 2|R_{ii}| + \sum_{\alpha} 4|R_{\beta\beta}| \right) \\
&\leq \left( \half q \rho_{-} + p \rho_{+} \right) \sum_{i\beta} h_{i\beta}^2 
\end{aligned}
\end{equation}
and last term is
\begin{equation}\label{eq:2step-ijab-est}
\begin{aligned}
-2 \sum_{ij\alpha\beta} R_{i\alpha j\beta} h_{ij} h_{\alpha\beta}
&= 2 \sum_{i\alpha} R_{i\alpha\alpha i} h_{ii} h_{\alpha\alpha} \\
&\leq \sum_{i\alpha} |R_{i\alpha\alpha i}| (h_{ii}^2 + h_{\alpha\alpha}^2) \\
&= \sum_{i\alpha} |R_{i\alpha\alpha i}| h_{ii}^2 
    + \sum_{i\alpha} |R_{i\alpha\alpha i}| h_{\alpha\alpha}^2 \\
&= \smfrac{1}{4} \sum_{i} h_{ii}^2 \sum_{j\alpha} (c_{ij}^\alpha)^2 
    + \smfrac{1}{4} \sum_{\alpha} h_{\alpha\alpha}^2 \sum_{ij} (c_{ij}^\alpha)^2 \\
&= \smfrac{1}{4} \sum_{i} h_{ii}^2 \, 2|R_{ii}| 
   + \smfrac{1}{4} \sum_{\alpha} h_{\alpha\alpha}^2 \, 4|R_{\alpha\alpha}| \\
&\leq \smfrac{1}{2} \rho_{-} \sum_{i} h_{ii}^2
   + \rho_{+} \sum_\alpha h_{\alpha\alpha}^2.
\end{aligned}
\end{equation}

For the Ricci part of $Q$, we assume that the orthonormal basis of $\mfr{n}$ is chosen such that the Ricci operator is diagonal.  Then, as in \eqref{eq:Rico-expand}, 
\begin{align*}
\langle \Ric \circ h,h \rangle
&= \sum_{ijk} R_{ij} h_{jk} h_{ki}
+ \sum_{ij\alpha} R_{ij} h_{j\alpha} h_{\alpha i}
+ \sum_{i\alpha\beta} R_{\alpha\beta} h_{\beta i} h_{i \alpha}
+ \sum_{\alpha\beta\gamma} R_{\alpha\beta} h_{\beta\gamma} h_{\gamma\alpha} \\
&= \sum_{ij} R_{ii} h_{ij}^2
  +\sum_{i\alpha} R_{ii} h_{i\alpha}^2
  +\sum_{i\alpha} R_{\alpha\alpha} h_{i\alpha}^2
  +\sum_{\alpha\beta} R_{\alpha\alpha} h_{\alpha\beta}^2 \\
&\leq \rho_{+} \sum_{i\alpha} h_{i\alpha}^2
  + \rho_{+} \sum_{\alpha\beta} h_{\alpha\beta}^2
\end{align*}
Noticing that the right-hand expression in \eqref{eq:h-norm-n} is invariant under change of orthonormal basis, we combine the above calculations to obtain
\[ Q(h)
\leq 2\rho_{-} \sum_{ij} h_{ij}^2 
+ \left( \frac{1}{2} q \rho_{-} + (p+1) \rho_{+} \right) \sum_{i\beta} h_{i\beta}^2
+ 2\rho_{+} \sum_{\alpha\beta} h_{\alpha\beta}^2. \]

We now have another criterion for stability of nilsolitons in the two-step case.

\begin{prop}\label{prop:2step-stab-cond}
A two-step nilsoliton is strictly linearly stable if all of the following hold.
\begin{align*}
\rho_{-}, \rho_{+} &< \frac{1}{4} \tr D \\
\frac{1}{2} q \rho_{-} + (p + 1) \rho_{+} &< \tr D
\end{align*}
\end{prop}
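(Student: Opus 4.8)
The plan is to obtain strict linear stability directly from Corollary \ref{cor:alg-stab-cond}, which reduces the claim to verifying $Q(h) < \half \tr D\, |h|^2$ for every nonzero symmetric $2$-tensor $h$. All of the analytic work has in fact already been done: the bound on $Q(h)$ displayed immediately before the statement controls $Q(h)$ by the three ``blocks'' of $h$ arising from the splitting $\mfr{n} = \mfr{v} \oplus \mfr{z}$, namely the $\mfr{v}\times\mfr{v}$ part $\sum_{ij} h_{ij}^2$, the mixed part $\sum_{i\beta} h_{i\beta}^2$, and the $\mfr{z}\times\mfr{z}$ part $\sum_{\alpha\beta} h_{\alpha\beta}^2$, with coefficients $2\rho_-$, $\frac{1}{2} q \rho_- + (p+1)\rho_+$, and $2\rho_+$ respectively.

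First I would express the target quantity $\half \tr D\, |h|^2$ in the same block coordinates. The norm formula \eqref{eq:h-norm-n} gives $|h|^2 = \sum_{ij} h_{ij}^2 + 2\sum_{i\beta} h_{i\beta}^2 + \sum_{\alpha\beta} h_{\alpha\beta}^2$, so $\half \tr D\, |h|^2$ equals $\half \tr D \sum_{ij} h_{ij}^2 + \tr D \sum_{i\beta} h_{i\beta}^2 + \half \tr D \sum_{\alpha\beta} h_{\alpha\beta}^2$, the factor $2$ on the mixed block having been absorbed into its coefficient. Matching this against the bound on $Q(h)$ term by term, the inequality $Q(h) < \half \tr D\, |h|^2$ will follow once the coefficient on each block of $Q(h)$ is dominated by the corresponding coefficient of $\half \tr D\, |h|^2$: that is, $2\rho_- < \half \tr D$ on the $\mfr{v}\times\mfr{v}$ block, $\frac{1}{2} q \rho_- + (p+1)\rho_+ < \tr D$ on the mixed block, and $2\rho_+ < \half \tr D$ on the $\mfr{z}\times\mfr{z}$ block. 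Rewriting $2\rho_\pm < \half \tr D$ as $\rho_\pm < \frac{1}{4}\tr D$, these are exactly the three hypotheses of the proposition.

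The last step is to see that these block comparisons combine into a \emph{strict} inequality. Since a term-by-term $\le$ comparison need not be strict on its own, I would use that $h \ne 0$ forces at least one of the three block-sums to be strictly positive; on that block the relevant hypothesis is a strict inequality, which upgrades the overall estimate to $Q(h) < \half \tr D\, |h|^2$, and Corollary \ref{cor:alg-stab-cond} then yields strict linear stability. This bookkeeping point is the only delicate part: there is no real obstacle, since the substantive content---the curvature estimates \eqref{eq:2step-ijkl-est}--\eqref{eq:2step-ijab-est} together with the eigenvalue bounds of Lemma \ref{lem:2step-ric-est}---is already in place.
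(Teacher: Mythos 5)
Your proposal is correct and is essentially the paper's own argument: the paper states this proposition as an immediate consequence of the displayed bound on $Q(h)$ and the norm decomposition \eqref{eq:h-norm-n}, comparing coefficients block by block exactly as you do. Your explicit remark on why the comparison is strict (some block sum must be positive when $h \ne 0$) is a detail the paper leaves implicit, but it is the same proof.
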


We now use this to prove the main result of this section.

\begin{prop}\label{prop:2step-stab}
Every two-step nilsoliton is strictly linearly stable.
\end{prop}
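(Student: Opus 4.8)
The plan is to verify the two numerical conditions of Proposition~\ref{prop:2step-stab-cond}, namely $\rho_-,\rho_+ < \frac14 \tr D$ and $\frac12 q \rho_- + (p+1)\rho_+ < \tr D$, using the estimates of Lemma~\ref{lem:2step-ric-est} together with elementary trace identities. First I would normalize the metric so that $\scal = -1$, as in the Lemma. Writing $N := |\Ric|^2$, tracing the soliton equation $\Ric = \lambda\id + D$ and using $\lambda = -N$ (established in the proof of Lemma~\ref{lem:2step-ric-est}) gives $\tr D = \tr\Ric - \lambda(p+q) = (p+q)N - 1$. Thus every quantity can be expressed in terms of $N$, $p$, and $q$.

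Next I would assemble four bounds. From Lemma~\ref{lem:2step-ric-est} we have the spectral bound $\rho_-,\rho_+ \le N$ and the normalization bound $N \ge \frac1p + \frac4q$. In addition, since $\Ric$ is positive-definite on $\mfr{z}$ and negative-definite on $\mfr{v}$, and since the normalization $\scal=-1$ forces $\tr(\Ric|_{\mfr{z}}) = 1$ and $\tr(\Ric|_{\mfr{v}}) = -2$, the largest eigenvalue on each block is dominated by the corresponding trace, so that $\rho_+ \le 1$ and $\rho_- \le 2$. Moreover $\rho_- < 2$ strictly, because a genuinely two-step algebra ($p\ge1$) must have at least two non-central directions in $\mfr{v}$, whence the numbers $-\Ric(U_i,U_i)$, which sum to $2$, cannot concentrate on a single vector.

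With these in hand the verification is short. The arithmetic--geometric mean inequality gives $(p+q)N \ge (p+q)(\frac1p+\frac4q) = 5 + \frac{4p}{q} + \frac{q}{p} \ge 9$, hence $\tr D \ge 8$ and $\frac14\tr D \ge 2$. The first condition then follows from $\rho_- < 2 \le \frac14\tr D$ and $\rho_+ \le 1 < \frac14\tr D$. For the second condition I would estimate
\[ \tfrac12 q\rho_- + (p+1)\rho_+ \le \tfrac12 q N + pN + \rho_+ \le (p + \tfrac12 q)N + 1, \]
using $\rho_-\le N$ and $\rho_+ \le N$ on the first two terms and $\rho_+\le1$ on the last. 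Comparing with $\tr D = (p+q)N - 1$, this reduces exactly to $\frac12 q N > 2$, i.e.\ $qN > 4$, which holds since $N > \frac4q$. Both conditions of Proposition~\ref{prop:2step-stab-cond} are therefore satisfied.

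The main obstacle is not any single estimate but the fact that no single bound on $\rho_\pm$ works uniformly: the spectral bound $\rho_\pm \le N$ is too weak in low dimensions (e.g.\ the Heisenberg case, where $p+q \le 4$), while the trace bounds $\rho_- \le 2$, $\rho_+ \le 1$ are too weak in high dimensions (where the factor $\frac12 q$ multiplies $\rho_-$). The point is to combine them, applying the trace bounds to the first condition and the spectral bounds to the second, after which the second condition collapses to the trivial inequality $qN > 4$. Some care is also needed to secure strictness in the boundary case $q = 2p$ with optimal Ricci tensor, which is precisely where $(p+q)N \ge 9$ becomes an equality; there the strict inequality comes from $\rho_- < 2$.
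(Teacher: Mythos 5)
Your proof is correct, and it closes the argument by a genuinely different route at the decisive step. Both you and the paper feed the two numerical conditions of Proposition~\ref{prop:2step-stab-cond} with the bounds $\rho_\pm \le |\Ric|^2$ and $|\Ric|^2 \ge \frac{1}{p} + \frac{4}{q}$ from Lemma~\ref{lem:2step-ric-est}; the paper uses \emph{only} those bounds, reduces everything to inequalities in $(p,q)$ alone, and finds that three types --- $(1,2)$, $(1,3)$, $(2,3)$ --- fail the resulting region test, so it must fall back on the case-by-case (computer-assisted) verification of Section~\ref{sec:low-dim}. You instead supplement the Lemma with the block-trace identities $\tr(\Ric|_{\mfr{z}}) = 1$ and $\tr(\Ric|_{\mfr{v}}) = -2$, which do follow from the displayed formulas $\Rc(U,U) = \frac{1}{2}\sum_\alpha \langle J_\alpha^2 U,U\rangle$ and $\Rc(Z,Z) = -\frac{1}{4}\tr(J_Z^2)$ under the normalization $\scal=-1$; this gives $\rho_+ \le 1$ and $\rho_- < 2$, and applying the trace bounds to the first condition and the spectral bounds to the second makes the verification uniform in $(p,q)$, with no exceptional cases and no appeal to the low-dimensional tables. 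The one point worth spelling out is the strictness of $\rho_- < 2$: the kernel of the positive semidefinite form $U \mapsto -\Rc(U,U)$ on $\mfr{v}$ is exactly the intersection of $\mfr{v}$ with the center of $\mfr{n}$, and if the form had rank one then $[\mfr{v},\mfr{v}]$ would vanish, contradicting $p\ge 1$; hence $-\Ric|_{\mfr{v}}$ has at least two positive eigenvalues summing to $2$, so its largest is strictly below $2$. That strictness is genuinely needed exactly where you say it is, namely when $q=2p$ and the Ricci tensor is optimal, where $\frac{1}{4}\tr D = 2$ on the nose. Your argument is, if anything, cleaner and more self-contained than the paper's, at the modest cost of the extra trace computation.
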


\begin{proof} 
We will use Proposition \ref{prop:2step-stab-cond} and the results of Lemma \ref{lem:2step-ric-est}.  As in the Lemma, we normalize so that $\scal=-1$.  First, we have
\[ \lambda = \frac{\tr \Ric^2}{\tr \Ric} = \frac{|\Ric|^2}{\scal} = - |\Ric|^2, \]
and we can rewrite the trace of $D$ as 
\[ \tr D = \scal - \lambda (p+q) = (p+q) |\Ric|^2 - 1. \]
Also, \eqref{eq:norm-ric-est} implies that
\begin{equation}\label{eq:norm-ric-est2}
- \frac{1}{\frac{1}{p}+\frac{4}{q}} \leq -\frac{1}{|\Ric|^2}. 
\end{equation}

By \eqref{eq:ric-evals}, for the first two conditions in Proposition \ref{prop:2step-stab-cond} to be satisfied it is enough that
\[ |\Ric|^2 \stackrel{?}{<} \frac{1}{4}(p+q) |\Ric|^2 - \frac{1}{4}. \]
Using \eqref{eq:norm-ric-est2}, it is therefore enough to show that
\begin{equation}\label{eq:2step-cond1}
1 \stackrel{?}{<} \frac{(2p+q)^2}{4(4p+q)}
= \frac{1}{4}(p+q) - \frac{1}{4} \frac{1}{\frac{1}{p}+\frac{4}{q}}
\leq \frac{1}{4}(p+q) - \frac{1}{4 |\Ric|^2}.
\end{equation}
Similarly, for third condition in Proposition \ref{prop:2step-stab-cond} to be satisfied it is enough that
\[ \frac{1}{2} q |\Ric|^2 + (p+1) |\Ric|^2 \stackrel{?}{<} (p+q) |\Ric|^2 - 1. \]
Again using \eqref{eq:norm-ric-est2}, it is enough to show that
\begin{equation}\label{eq:2step-cond2}
1 \stackrel{?}{<} \frac{q(2p+q)}{2(4p+q)}
= \frac{1}{2} q - \frac{1}{\frac{1}{p}+\frac{4}{q}}
\leq \frac{1}{2} q - \frac{1}{|\Ric|^2}. 
\end{equation}
One other condition is the restriction on the dimensions of $\mfr{v}$ and $\mfr{z}$ from before,
\begin{equation}\label{eq:2step-cond3}
1 \leq p \stackrel{?}{\leq} \frac{1}{2} q(q-1),
\end{equation}
which must hold for any two-step nilpotent Lie algebra.  As $p\geq 1$, one can rewrite conditions \eqref{eq:2step-cond1}, \eqref{eq:2step-cond2}, and \eqref{eq:2step-cond3} as
\begin{align*}
q &\stackrel{?}{>} 2(1-p+\sqrt{1+2p}) \\
q &\stackrel{?}{>} 1-p+\sqrt{1+6p+p^2} \\
q &\stackrel{?}{\geq} \frac{1}{2}(1+\sqrt{1+8p})
\end{align*}
The curves from the first two conditions intersect at $p=3/2$, and the second and third intersect at $p = (4+3\sqrt{2})/2 \cong 4.1$; see Figure \ref{fig:2step}, where the solid line represents the third inequality.  From this, it is not difficult to see that all pairs $(p,q)$ satisfy these three conditions simultaneously, except those of the form $(1,2)$, $(1,3)$, and $(2,3)$.  But these satisfy $p+q \leq 5$ and are covered in Section \ref{sec:low-dim}.    
\end{proof}

\begin{figure}[t]
\includegraphics[width=0.8\textwidth]{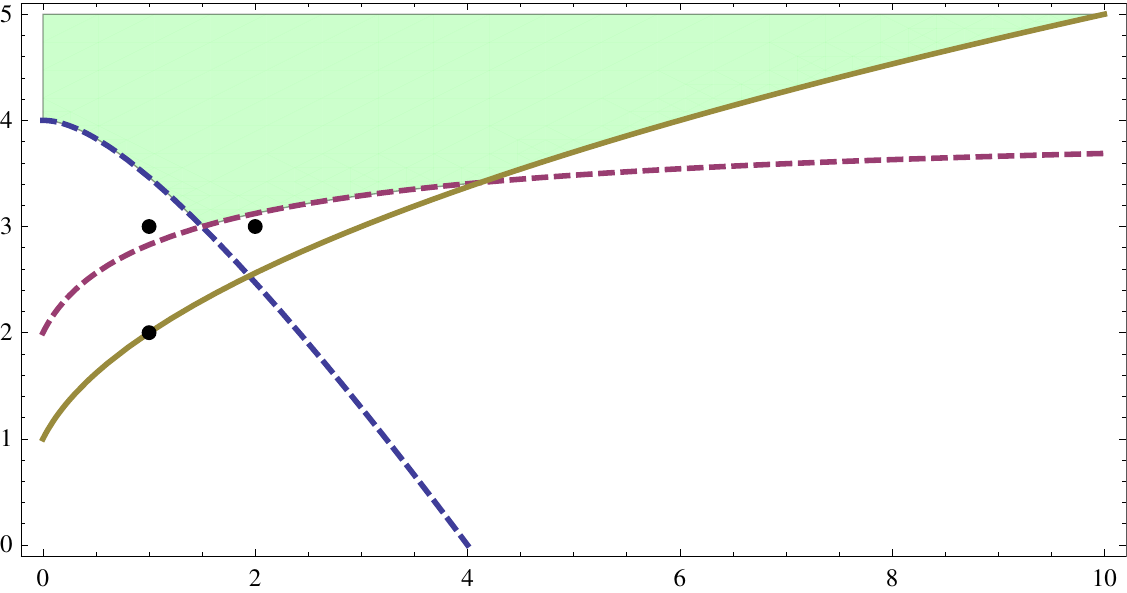}
\caption{Region in the $(p,q)$-plane corresponding to conditions \eqref{eq:2step-cond1}, \eqref{eq:2step-cond2}, and \eqref{eq:2step-cond3}, and the three types $(1,2)$, $(1,3)$, $(2,3)$ not satisfying those conditions.}
\label{fig:2step}
\end{figure}

\section{Three-step solvsolitons}\label{sec:3step}

Let $\mfr{s}$ be a three-step solvable Lie algebra.  That is, $\mfr{n} := [\mfr{s},\mfr{s}] = \mfr{v} \oplus \mfr{z}$ is two-step nilpotent, and $\mfr{s} = \mfr{n} \rtimes \mfr{a}$ where $\mfr{a}$ is abelian.  Assume that $\dim \mfr{n} = n = p + q$ (as in the previous section) and that $\mfr{a}$ is one-dimensional (as in the construction following Theorem \ref{thm:lauret}).  The goal of this section is to prove that Einstein metrics on certain spaces of this form are linearly stable.  

\begin{prop}\label{prop:stab-solv-heis-free}
Let $\mfr{s}$ be a solvable Lie algebra whose nilradical is codimension-one, and either a generalized Heisenberg algebra or a free two-step nilpotent Lie algebra.  Then the natural Einstein metric on $\mfr{s}$ is strictly linearly stable.
\end{prop}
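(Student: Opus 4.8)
The plan is to apply Corollary \ref{cor:alg-stab-cond} directly to the one-dimensional Einstein extension $\mfr{s} = \mfr{n} \rtimes \R A$, where $\mfr{n} = \mfr{v} \oplus \mfr{z}$ is either a generalized Heisenberg algebra or a free two-step nilpotent algebra. The key observation is that in both cases the nilsoliton has optimal Ricci tensor of eigenvalue type $(1,2)$, so by Lemma \ref{lem:1-dim-D} the extension data is rigid: $\ad_A = (\tr D)^{-1/2} D$ and $\mrm{pr}_\mfr{a} \sigma(X,Y) = (\tr D)^{-1/2} \langle DX, Y \rangle A$. The curvature on $\mfr{s}$ then decomposes via the formulas \eqref{eq:ro-relations} and \eqref{eq:rico-relations}, reducing the computation of $\langle \Ro h, h \rangle$ and $\langle \Ric \circ h, h \rangle$ on $\mfr{s}$ to quantities already controlled on $\mfr{n}$ by Lemma \ref{lem:2step-ric-est}, plus explicit correction terms built from $D$.

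The central quantitative step is to bound $Q(h)$ on $\mfr{s}$ against $\half \tr D |h|^2$. First I would set up the block decomposition of a symmetric $2$-tensor $h$ on $\mfr{s}$ into its $\mfr{v}\times\mfr{v}$, $\mfr{v}\times\mfr{z}$, $\mfr{z}\times\mfr{z}$, and $\mfr{a}$-involving components, and reuse the two-step curvature estimates \eqref{eq:2step-ijkl-est}, \eqref{eq:2step-mixed-est}, \eqref{eq:2step-ijab-est} from Section \ref{sec:2step}, now supplemented by the derivation terms and the mixed terms $R(A,X,A,Y)$, $R(X,Y,Z,A)$ coming from \eqref{eq:riem-s}. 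Because the nilsoliton is optimal, the derivation $D$ acts as a scalar on each of $\mfr{v}$ and $\mfr{z}$ (as $\id_\mfr{v}$ and $2\,\id_\mfr{z}$ up to normalization), which makes the correction terms $\tr(D\circ h)^2$, $\langle D, h\rangle^2$, and $\langle D \circ h, h\rangle$ fully explicit in terms of the block norms of $h$.

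With everything expressed in the parameters $p = \dim\mfr{z}$, $q = \dim\mfr{v}$, and the eigenvalue ratios of $D$, the stability inequality $Q(h) < \half \tr D |h|^2$ becomes a finite collection of scalar inequalities in $p$ and $q$, analogous to \eqref{eq:2step-cond1} and \eqref{eq:2step-cond2} but shifted by the Einstein extension. The two families impose different admissible $(p,q)$: the generalized Heisenberg case forces $\dim\mfr{z} = p$ with $J_Z^2 = -|Z|^2\id_\mfr{v}$, constraining $q$ in terms of Clifford module dimensions, while the free case fixes $p = \half q(q-1)$. In each case I would verify the resulting inequalities hold for all admissible $(p,q)$, treating any small exceptional dimensions by hand as in Section \ref{sec:2step}.

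The main obstacle I expect is controlling the mixed block $h_{i\alpha}$ (components pairing $\mfr{v}$ or $\mfr{z}$ with the extension direction $\mfr{a}$) together with the genuinely non-diagonal $\mfr{v}\times\mfr{z}$ block of $h$, since the Einstein extension couples these through the $R(X,Y,Z,A)$ and $\mrm{pr}_\mfr{a}\sigma$ terms and one cannot simultaneously diagonalize $h$ on all blocks. The technical heart is to show these coupling contributions are dominated by the gain $\half\tr D |h|^2$ from the extension; here the precise structure of each family — the Clifford relation for generalized Heisenberg algebras and the $\mfr{so}(q)$-representation structure for the free case — is what makes the estimates close, so the argument necessarily splits into the two cases rather than proceeding uniformly.
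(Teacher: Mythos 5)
Your overall strategy --- apply Corollary \ref{cor:alg-stab-cond} to the one-dimensional Einstein extension, compute the curvature of $\mfr{s}$ explicitly, decompose $h$ into blocks, run weighted arithmetic--geometric mean estimates to reduce everything to scalar inequalities in $(p,q)$, and exploit the family-specific structure with small cases done by hand --- is exactly the paper's. But two of the specific steps you lean on would fail as written. First, the decomposition formulas \eqref{eq:ro-relations} and \eqref{eq:rico-relations} are derived in Section \ref{sec:extensions} only for tensors $h$ that vanish on every block involving $\mfr{a}$; that is the setting of Proposition \ref{prop:ext-stab}, which goes in the opposite direction, from the Einstein extension down to the nilsoliton. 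They say nothing about a general symmetric $2$-tensor on $\mfr{s}$ with nonzero components $h_{i0}$, $h_{\alpha 0}$, $h_{00}$. The paper instead tabulates all curvature components of $\mfr{s}$ (including $R_{ij\alpha 0}$, $R_{i\alpha j0}$, $R_{i0j0}$, $R_{\alpha 0\beta 0}$) and expands $\langle \Ro h,h\rangle$ into four groups according to the number of $\mfr{a}$-indices; there is no shortcut through the Section \ref{sec:extensions} identities. Second, the paper states explicitly that the generic two-step estimate \eqref{eq:2step-mixed-est} ``does not yield a sufficient estimate'' for the mixed $\mfr{v}\times\mfr{z}$ terms in this setting, and Lemma \ref{lem:2step-ric-est} is likewise not the controlling input: what actually closes the argument is that in both families the structure constants in a suitable basis are $0$ or $\pm 1$ with strong orthogonality properties (the nice-basis lemma coming from the Clifford relations, respectively the explicit $\mfr{so}(q)$-action on $\R^q$), which yields the sharper bounds such as $\sum_{j\alpha}|\langle J_\alpha U_j,J_\beta U_i\rangle|\leq p$ in the Heisenberg case and $\leq q-1$ in the free case. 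You do anticipate this in your final paragraph, so the gap is one of execution rather than of conception, but a proof following your plan literally would stall at both of these points.
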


The idea is to use Corollary \ref{cor:alg-stab-cond}, which means we must understand the operator $\Ro$ on the relevant spaces.  The analysis similar in spirit to that in the previous section, but more complicated due to the added complexity of the solvable spaces involved.

\subsection{Curvature}

We assume that the metric on the nilradical $\mfr{n}$ is a nilsoliton with eigenvalue type $(1,2)$ (i.e., optimal Ricci tensor), and set $c := b/n$.

Let 
\[ \{U_1,\dots,U_n,Z_1,\dots,Z_m,A \} \]
be an orthonormal basis for $\mfr{s}$, with $U_i \in \mfr{v}$, $Z_\alpha \in \mfr{z}$, and $A \in \mfr{a}$.  We will use capital Roman indices for elements of $\mfr{n}$, lowercase Roman indices for elements of $\mfr{v}$, Greek indices for elements of $\mfr{z}$, and 0 as the index for $\mfr{a}$.

With respect to this basis, let us describe the Riemann curvature.  Writing $J_\alpha$ to mean $J_{Z_\alpha}$, here are all the curvature components sorted according to the number of indices from $\mfr{v}$, $\mfr{z}$, and $\mfr{a}$.

\begin{center}
\begin{longtable}{llcll}
4-0-0 & \multicolumn{4}{l}{$R_{ijk\ell} = \half \langle [U_i,U_j],[U_k,U_\ell]\rangle 
             + \smfrac{1}{4} \langle [U_i,U_k],[U_j,U_\ell]\rangle 
             - \smfrac{1}{4} \langle [U_i,U_\ell],[U_j,U_k]\rangle$} \\[6pt]
      & \multicolumn{4}{l}{$\qquad \qquad + c \, \delta_{ik} \delta_{j\ell} 
                                  - c \, \delta_{i\ell} \delta_{jk}$} \\
      & & & & \\
3-1-0 & $R_{ijk\alpha} = 0$  & \qquad & 2-2-0 & $R_{ij\alpha\beta} = -\smfrac{1}{4} \langle J_\beta U_i, J_\alpha U_j \rangle + \smfrac{1}{4} \langle J_\alpha U_i, J_\beta U_j \rangle$ \\
3-0-1 & $R_{ijk0} = 0$ &        & 2-2-0 & $R_{i\alpha j\beta}= -\smfrac{1}{4} \langle J_\alpha U_j, J_\beta U_i \rangle + 2c \, \delta_{ij} \delta_{\alpha\beta}$ \\
      &                &        & 2-1-1 & $R_{ij\alpha0}     = - \sqrt{c} \, \langle J_\alpha U_i, U_j \rangle$ \\   
      &                &        & 2-1-1 & $R_{i\alpha j0}     = - \half \sqrt{c} \, \langle J_\alpha U_i, U_j \rangle$ \\   
      &                &        & 2-0-2 & $R_{i0j0}     = c \, \delta_{ij}$ \\
      &                &        & 2-0-2 & $R_{ij00}     = 0$ \\
      &                &        &       & \\      
1-3-0 & $R_{i\alpha\beta\gamma} = 0$ & \qquad & 0-4-0 & $R_{\alpha\beta\gamma\delta} 
                                                       = 4c \, \delta_{\alpha \gamma} \delta_{\beta \delta} 
                                                       - 4c \, \delta_{\alpha \delta} \delta_{\beta \gamma}$ \\
1-2-1 & $R_{i\alpha\beta 0} = R_{\alpha\beta i0} = 0$  &        & 0-3-1 & $R_{\alpha\beta\gamma 0} = 0 $ \\
1-1-2 & $R_{i\alpha 00} = R_{i0\alpha 0} = 0$          &        & 0-2-2 & $R_{\alpha 0 \beta 0} = 																																												4c \, \delta_{\alpha\beta}$ \\   
1-0-3 & $R_{i000} = 0 $                                &        & 0-2-2 & $R_{\alpha\beta 00} = 0$ \\   
      &                                                &        & 0-1-3 & $R_{\alpha 000} = 0$ \\
      &                                                &        & 0-0-4 & $R_{0000} = 0$
\end{longtable}
\end{center}

\subsection{Estimates}

In this section we estimate $\langle \Ro h,h \rangle$ for symmetric 2-tensors $h$.  First we note that, with respect to our orthonormal basis above, such a tensor has a representation similar to \eqref{eq:h-decomp}.  We again make the further assumption that the basis of $\mfr{s}$ is chosen so that $h$ is diagonal on $\mfr{v} \times \mfr{v}$ and $\mfr{z} \times \mfr{z}$.  In this case, the norm with respect to the inner product on $\mfr{s}$ is
\begin{equation}\label{eq:h-norm}
|h|^2
= \sum_{i} h_{ii}^2 + 2 \sum_{i\beta} h_{i\beta}^2 + 2 \sum_i h_{i0}^2 
+ \sum_{\alpha} h_{\alpha\alpha}^2 + 2 \sum_{\alpha} h_{\alpha 0}^2 + h_{00}^2.
\end{equation}

We first eliminate any terms in $\langle \Ro h,h \rangle$ that vanish due to a 0 index in a way similar to \eqref{eq:Ro-expand}.
\begin{align*}
\langle \Ro h,h \rangle
&=  R_{IJKL} h_{IL} h_{JK} + R_{IJK0} h_{I0} h_{JK} 
  + R_{IJ0L} h_{IL} h_{J0} + \cancel{R_{IJ00}} h_{I0} h_{J0} \\
&\quad + R_{I0KL} h_{IL} h_{0K} + R_{I0K0} h_{I0} h_{0K} 
       + R_{I00L} h_{IL} h_{00} + \cancel{R_{I000}} h_{I0} h_{00} \\
&\quad + R_{0JKL} h_{0L} h_{JK} + R_{0JK0} h_{00} h_{JK} 
       + R_{0J0L} h_{0L} h_{J0} + \cancel{R_{0J00}} h_{00} h_{J0} \\
&\quad + \cancel{R_{00KL}} h_{0L} h_{0K} + \cancel{R_{00K0}} h_{00} h_{0K} 
       + \cancel{R_{000L}} h_{0L} h_{00} + \cancel{R_{0000}} h_{00} h_{00} \\
&=  \underbrace{R_{IJKL} h_{IL} h_{JK}}_{I} 
 +  \underbrace{4 R_{IJK0} h_{I0} h_{JK}}_{II} 
 +  \underbrace{2 R_{I0J0} h_{I0} h_{J0}}_{III} 
 +  \underbrace{(- 2) R_{I0J0} h_{IJ} h_{00}}_{IV} . 
\end{align*}
We analyze these four terms separately.   Our method is to expand once again as in \eqref{eq:Ro-expand}, cancel any zero curvature terms, and insert the non-zero curvature terms.  We then use weighted arithmetic/geometric mean inequalities (with constants $A,B,C,D \in \R$)\footnote{The constants $A$ and $D$ should not be confused with the basis element $A$ or the soliton derivation $D$.} to obtain expressions comparable to $|h|^2$.
\begin{align*}
I
&= R_{IJKL} h_{IL} h_{JK} \\
&= R_{ijk\ell} h_{i\ell} h_{jk} 
   + \cancel{R_{ijk\delta}} h_{i\delta} h_{jk} 
   + \cancel{R_{ij\gamma\ell}} h_{i\ell} h_{j\gamma}
   + R_{ij\gamma\delta} h_{i\delta} h_{j\gamma} \\
&\quad + \cancel{R_{i\beta k\ell}} h_{i\ell} h_{\beta k} 
       + R_{i\beta k\delta} h_{i\delta} h_{\beta k} 
       + R_{i\beta\gamma\ell} h_{i\ell} h_{\beta\gamma}
       + \cancel{R_{\alpha j \gamma \delta}} h_{i\delta} h_{\beta\gamma} \\
&\quad + \cancel{R_{\alpha jk\ell}} h_{\alpha\ell} h_{jk} 
       + R_{\alpha jk\delta} h_{\alpha\delta} h_{jk} 
       + R_{\alpha j\gamma \ell} h_{\alpha\ell} h_{j \gamma} 
       + \cancel{R_{\alpha j \gamma \delta}} h_{\alpha \delta} h_{j \gamma} \\
&\quad + R_{\alpha\beta k\ell} h_{\alpha\ell} h_{\beta k} 
       + \cancel{R_{\alpha\beta k\delta}} h_{\alpha\delta} h_{\beta k} 
       + \cancel{R_{\alpha\beta\gamma\ell}} h_{\alpha\ell} h_{\beta\gamma} 
       + R_{\alpha\beta\gamma\delta} h_{\alpha\delta} h_{\beta\gamma} \\
&=     R_{ijk\ell} h_{i\ell} h_{jk}
   + 2 R_{ij\alpha\beta} h_{i\beta} h_{j\alpha}
   + 2 R_{i\alpha j\beta} h_{i\beta} h_{j\alpha}
   - 2 R_{i\beta j\alpha} h_{ij} h_{\alpha\beta}
   +   R_{\alpha\beta\gamma\delta} h_{\alpha\delta} h_{\beta\gamma} \\
&= \left( \half \langle [U_i,U_j],[U_k,U_\ell]\rangle 
          + \smfrac{1}{4} \langle [U_i,U_k],[U_j,U_\ell]\rangle 
          - \smfrac{1}{4} \langle [U_i,U_\ell],[U_j,U_k]\rangle \right. \\
&\qquad  \left. + c \, \delta_{ik} \delta_{j\ell} 
                - c \, \delta_{i\ell} \delta_{jk} \right) h_{i\ell} h_{jk} \\
&\quad   + \left( -\half \langle J_\beta U_i, J_\alpha U_j \rangle  
         + \half \langle J_\alpha U_i, J_\beta U_j \rangle 
         - \half \langle J_\alpha U_j, J_\beta U_i \rangle 
         \right) h_{i\beta} h_{j\alpha} 
         + 4c \, \delta_{ij} \delta_{\alpha\beta} h_{i\beta} h_{j\alpha} \\
&\quad  + \half \langle J_\alpha U_j, J_\beta U_i \rangle h_{ij} h_{\alpha\beta}
  - 4c \, \delta_{ij} \delta_{\alpha\beta} h_{ij} h_{\alpha\beta} 
  + 4c (\delta_{\alpha\gamma} \delta_{\beta\delta} - \delta_{\alpha \delta} \delta_{\beta\gamma}) h_{\alpha\delta} h_{\beta\gamma}  \\
&= \smfrac{3}{4} \langle [U_i,U_j],[U_k,U_\ell] \rangle h_{i\ell} h_{jk} \\
&\quad  - \langle J_\beta U_i, J_\alpha U_j \rangle  h_{i\beta} h_{j\alpha}
        + \half \langle J_\alpha U_i, J_\beta U_j \rangle h_{i\beta} h_{j\alpha} 
        + \half \langle J_\alpha U_j, J_\beta U_i \rangle h_{ij} h_{\alpha\beta} \\
&\quad + c (h_{ij}^2 - h_{ii} h_{jj})
       + 4c (h_{i\alpha}^2 - h_{ii} h_{\alpha\alpha})
       + 4c (h_{\alpha \beta}^2 - h_{\alpha\alpha} h_{\beta\beta})
\end{align*}
We estimate this term-by-term, and we now use explicit sums for clarity.  The first is handled in way similar to \eqref{eq:2step-ijkl-est}.
\[ \smfrac{3}{4} \sum_{ijk\ell} \langle [U_i,U_j],[U_k,U_\ell] \rangle h_{i\ell} h_{jk}
\leq 3p \smfrac{b}{q} \sum_i h_{ii}^2 \] 
Unfortunately, the idea of \eqref{eq:2step-mixed-est} does not yield a sufficient estimate for the second and third terms.
\begin{align*}
&-\sum_{ij\alpha\beta} \langle J_\beta U_i, J_\alpha U_j \rangle  h_{i\beta} h_{j\alpha}  
   + \half \sum_{ij\alpha\beta} \langle J_\alpha U_i, J_\beta U_j \rangle h_{i\beta} h_{j\alpha} \\
&\leq \half \sum_{ij\alpha\beta}|\langle J_\beta U_i, J_\alpha U_j \rangle| (h_{i\beta}^2 + h_{j\alpha}^2) 
            + \smfrac{1}{4} \sum_{ij\alpha\beta} |\langle J_\alpha U_i, J_\beta U_j \rangle| (h_{i\beta}^2 + h_{j\alpha}^2) \nonumber \\
&= \left( \sum_{j\alpha} |\langle J_\beta U_i, J_\alpha U_j \rangle| 
         + \half \sum_{j\alpha} |\langle J_\alpha U_i, J_\beta U_j \rangle| \right) 
      \sum_{i\beta} h_{i\beta}^2 
\end{align*}
As in \eqref{eq:2step-ijab-est}, we estimate the fourth term.
\[ \half \sum_{ij\alpha\beta} \langle J_\alpha U_j, J_\beta U_i \rangle h_{ij} h_{\alpha\beta} 
\leq Ap \smfrac{b}{q} \sum_i h_{ii}^2 + \smfrac{n}{A} \smfrac{b}{n} \sum_\alpha h_{\alpha\alpha}^2 \]
We group the remaining terms in $I$ in a way that will become clear later.
\begin{align*}
&  c \sum_{ij}(h_{ij}^2 - h_{ii} h_{jj})
+ 4c \sum_{i\alpha}(h_{i\alpha}^2 - h_{ii} h_{\alpha\alpha})
+ 4c \sum_{\alpha\beta}(h_{\alpha \beta}^2 - h_{\alpha\alpha} h_{\beta\beta}) \\
&= c \sum_{ij}(h_{ij}^2 - h_{ii} h_{jj})
+ 2c \sum_{i\alpha}(h_{i\alpha}^2 - h_{ii} h_{\alpha\alpha})
+ c \sum_{\alpha\beta}(h_{\alpha \beta}^2 - h_{\alpha\alpha} h_{\beta\beta}) \\
&\quad + 2c \sum_{i\alpha}(h_{i\alpha}^2 - h_{ii} h_{\alpha\alpha})
 + 3c \sum_{\alpha\beta}(h_{\alpha \beta}^2 - h_{\alpha\alpha} h_{\beta\beta}) \\
&\leq c \sum_{ij} (h_{ij}^2 - h_{ii} h_{jj})
+ 2c \sum_{i\alpha} (h_{i\alpha}^2 - h_{ii} h_{\alpha\alpha})
+ c \sum_{\alpha\beta} (h_{\alpha \beta}^2 - h_{\alpha\alpha} h_{\beta\beta}) \\
&\quad + 2c \sum_{i\alpha} (h_{i\alpha}^2 + \smfrac{B}{2} h_{ii}^2 + \smfrac{1}{2B} h_{\alpha\alpha}^2 )
 + 3c \sum_{\alpha\beta} h_{\alpha \beta}^2 - 3c \left( \sum_\alpha h_{\alpha\alpha} \right)^2 \\
&\leq \underline{c \sum_{ij} (h_{ij}^2 - h_{ii} h_{jj})}
+ \underline{2c \sum_{i\alpha} (h_{i\alpha}^2 - h_{ii} h_{\alpha\alpha})}
+ \underline{c \sum_{\alpha\beta} (h_{\alpha \beta}^2 - h_{\alpha\alpha} h_{\beta\beta})} \\
&\quad + 2 \smfrac{b}{q} \sum_{i\beta} h_{i\beta}^2 
       + Bm \smfrac{b}{q} \sum_i h_{ii}^2 
       + \smfrac{q}{B} \smfrac{b}{q} \sum_\alpha h_{\alpha\alpha}^2
       + 3\smfrac{b}{q} \sum_{\alpha\beta} h_{\alpha \beta}^2
\end{align*}
We continue with $II$, $III$, and $IV$.
\begin{align*}
II
&= 4 R_{IJK0} h_{I0} h_{JK} \\
&= 4 \cancel{R_{ijk0}} h_{i0} h_{jk} 
   + 4 R_{ij\gamma 0} h_{i0} h_{j\gamma} 
   + 4 R_{i\beta k0} h_{i0} h_{\beta k}
   + 4 \cancel{R_{i\beta\gamma 0}} h_{i0} h_{\beta\gamma} \\
&\quad + 4 R_{\alpha jk0} h_{\alpha 0} h_{jk} 
       + 4 \cancel{R_{\alpha j\gamma 0}} h_{\alpha 0} h_{j\gamma} 
       + 4 \cancel{R_{\alpha\beta k0}} h_{\alpha 0} h_{\beta k}
       + 4 \cancel{R_{\alpha \beta \gamma 0}} h_{\alpha 0} h_{\beta\gamma} \\
&=   4 R_{ij\alpha0} h_{i0} h_{j\alpha}
   + 4 R_{i\alpha j0} h_{i0} h_{j\alpha}
   - \cancel{4 R_{i\alpha i0} h_{\alpha0} h_{ii}} \\
&= - 6 \sqrt{c} \sum_{ij\alpha} \langle J_\alpha U_i,U_j \rangle h_{i0} h_{j\alpha} \\
&\leq \smfrac{3 \sqrt{c}}{C} \sum_{ij\alpha} |\langle J_\alpha U_i,U_j \rangle| h_{i0}^2
      + 3 \sqrt{c} C \sum_{ij\alpha} |\langle J_\alpha U_i,U_j \rangle| h_{j\alpha}^2 \\
&= \left( \smfrac{3}{C} \sqrt{\smfrac{b}{q}} \sum_{j\alpha} |\langle J_\alpha U_i,U_j \rangle| \right) \sum_i h_{i0}^2
 +  \left( 3C \sqrt{\smfrac{b}{q}} \sum_j |\langle J_\beta U_j,U_i \rangle| \right) \sum_{i\beta} h_{i\beta}^2
\end{align*}

\begin{align*}
III
&= 2 R_{I0J0} h_{I0} h_{J0} \\
&= 2 R_{i0j0} h_{i0} h_{j0} 
   + 2 \cancel{R_{i0\beta 0}} h_{i0} h_{\beta 0} 
   + 2 \cancel{R_{\alpha 0 j0}} h_{\alpha0} h_{j0}
   + 2 R_{\alpha 0\beta 0} h_{\alpha0} h_{\beta0} \\
&= 2c \, \delta_{ij} h_{i0} h_{j0} + 8c \, \delta_{\alpha\beta} h_{\alpha0} h_{\beta0} \\
&= \underline{2c \sum_i h_{i0}^2} + \underline{2c \sum_\alpha h_{\alpha0}^2}
    + 6 \smfrac{b}{q} \sum_\alpha h_{\alpha0}^2 
\end{align*}

\begin{align*}
IV
&= -2 R_{I0J0} h_{IJ} h_{00} \\
&= -2 R_{i0j0} h_{ij} h_{00} 
   - 2 \cancel{R_{i0\beta 0}} h_{i\beta} h_{00} 
   - 2 \cancel{R_{\alpha 0 j0}} h_{\alpha j} h_{00}
   - 2 R_{\alpha 0\beta 0} h_{\alpha\beta} h_{00} \\
&= - 2c \, \delta_{ij} h_{ij} - 8c \, \delta_{\alpha\beta} h_{\alpha\beta} h_{00} \\
&= - 2c \sum_i h_{ii} h_{00} - 2c \sum_\alpha h_{\alpha\alpha} h_{00} 
   - 6c \sum_\alpha h_{\alpha\alpha} h_{00} \\
&\leq - 2c \sum_i h_{ii} h_{00} - 2c \sum_\alpha h_{\alpha\alpha} h_{00} 
     +  3c \sum_\alpha \left(  \smfrac{1}{D} h_{\alpha\alpha}^2 + D h_{00}^2 \right) \\
&= - \underline{2c \sum_i h_{ii} h_{00}} - \underline{2c \sum_\alpha h_{\alpha\alpha} h_{00}} 
   + \smfrac{3}{D} \smfrac{b}{q} \sum_\alpha h_{\alpha\alpha}^2  
   + 3Dp \smfrac{b}{q} h_{00}^2
\end{align*}

With the expression for $|h|^2$ from \eqref{eq:h-norm} in mind, we combine all underlined terms to obtain a multiple of $|h|^2$ and a negative quantity.
\begin{align*}
&c \sum_{ij} (h_{ij}^2 - h_{ii} h_{jj})
+ 2c \sum_{i\alpha} (h_{i\alpha}^2 - h_{ii} h_{\alpha\alpha})
+ c \sum_{\alpha\beta} (h_{\alpha \beta}^2 - h_{\alpha\alpha} h_{\beta\beta}) \\
&\quad+ 2c \sum_i (h_{i0}^2 - h_{ii} h_{00}) 
+ 2c \sum_\alpha (h_{\alpha0}^2 - h_{\alpha\alpha} h_{00}) + c ( h_{00}^2 - h_{00}^2 )\\
&=c|h|^2 - c\left( \sum_i h_{ii} + \sum_\alpha h_{\alpha\alpha} + h_{00} \right)^2 \\
&\leq \smfrac{b}{q} |h|^2
\end{align*}

We put everything together in a way comparable to the expression for $|h|^2$,
\begin{align*}
\langle \Ro h,h \rangle 
&=   \sum_{i}       C_{i} h_{ii}^2 
 + 2 \sum_{i\beta} C_{i\beta} h_{i\beta}^2
 + 2 \sum_i C_{i0} h_{i0}^2 \\
&\qquad +   \sum_{\alpha} C_{\alpha} h_{\alpha\alpha}^2 
        + 2 \sum_{\alpha 0} C_{\alpha 0} h_{\alpha 0}^2 
        + C_{00} h_{00}^2 
\end{align*}
where
\begin{align*}
C_{i}
&:= \frac{b}{q} \Big( [3+A+B]p + 1 \Big) \\   
C_{i\beta}
&:= \frac{1}{2} \sum_{j\alpha} |\langle J_\beta U_i, J_\alpha U_j \rangle| 
  + \frac{1}{4} \sum_{j\alpha} |\langle J_\alpha U_i, J_\beta U_j \rangle|
  + \frac{3C}{2}  \sqrt{\frac{b}{q}} \sum_j |\langle J_\beta U_i,U_j \rangle| + 2 \frac{b}{q} \\
C_{i0}
&:= \frac{3}{2C} \sqrt{\frac{b}{q}} \sum_{j\alpha} |\langle J_\alpha U_i,U_j \rangle| + \frac{b}{q} \\ 
C_{\alpha}
&:= \frac{b}{q} \left( \left[ \frac{1}{A} + \frac{1}{B} \right] q + \frac{3}{D} + 4 \right) \\
C_{\alpha0}
&:= 4 \frac{b}{q} \\
C_{00}
&:= \frac{b}{q} \left( 3Dp + 1 \right)
\end{align*}

\subsection{Generalized Heisenberg algebras}\label{subsec:3step-heis}

Let $\mfr{n} := \mfr{v} \oplus \mfr{z}$ be a generalized Heisenberg algebra as in Example \ref{ex:heis-def}.  The Ricci tensors are optimal with $b = \smfrac{q}{4}$, and $-\lambda = p + \smfrac{q}{4}$.  
\[ \Ric_\mfr{n} = \left(
\begin{BMAT}{ccc.ccc}{ccc.ccc}
 &            & & &                 &  \\
 & -\frac{p}{2} \, \id_\mfr{v}     & & &                 &  \\
 &            & & &                 &  \\
 &            & & &                 &  \\
 &            & & & \frac{q}{4} \, \id_\mfr{z} &  \\
 &            & & &                 &  \\
\end{BMAT} \right)
\qquad 
D = \left(
\begin{BMAT}{ccc.ccc}{ccc.ccc}
 &            & & &                 &  \\
 & \left(\frac{p}{2}+ \frac{q}{4} \right) \, \id_\mfr{v}     & & &                 &  \\
 &            & & &                 &  \\
 &            & & &                 &  \\
 &            & & & \left(\frac{p}{2} + q\right) \, \id_\mfr{z} &  \\
 &            & & &                 &  \\
\end{BMAT} \right)\]
We also have $c=1/4$.  

Recall that complex hyperbolic space is a solvable Lie group, and its Lie algebra is a one-dimensional extension of a classical Heisenberg algebra.  Similarly, one can introduce a specific one-dimensional extension of any generalized Heisenberg algebra to create a solvable Lie algebra.  It turns out that these algebras always carry Einstein metrics, and the corresponding simply connected Lie groups are are called \textit{Damek-Ricci spaces}; see \cite{Berndt1995}.

We will prove linear stability of all Damek-Ricci spaces.  The case of complex hyperbolic space ($p=1$) was covered in \cite{Wu2013}.  We will assume that $p \geq 2$, and additionally that $p \leq q/4$.  This excludes a finite number of cases, whose stability can be checked separately.  To proceed, we need information on the basis on any Damek-Ricci space.  Recall that a basis $\{U_i\}$ of a Lie algebra is \textit{nice} if $[U_i,U_j]$ is always a scalar multiple of some element in the basis, and two different brackets $[U_i,U_j]$, $[U_k,U_\ell]$ can be a nonzero multiple of the same $U_r$ only if $\{i,j\}$ and $\{k,\ell\}$ are disjoint.  See \cite{LauretWill2013}.

\begin{lem}
The standard basis for the any Damek-Ricci space is nice, and all structure constants are $0$, $1$, or $-1$.
\end{lem}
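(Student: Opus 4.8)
The plan is to exhibit an orthonormal basis of $\mfr{v}$ in which every $J_\alpha := J_{Z_\alpha}$ acts as a signed permutation, i.e.\ sends each basis vector to $\pm$ one other basis vector; once this is in hand, all of the claimed structure follows almost formally. First I would invoke Example~\ref{ex:heis-def}: since $\mfr{n}$ is a generalized Heisenberg algebra, the $J$-map extends to a representation of the Clifford algebra $\mrm{Cl}(\mfr{z},\eta)$ on $\mfr{v}$. Fixing an orthonormal basis $Z_1,\dots,Z_p$ of $\mfr{z}$ produces generators $J_1,\dots,J_p$ with $J_\alpha J_\beta + J_\beta J_\alpha = -2\delta_{\alpha\beta}\,\id_\mfr{v}$, so each $J_\alpha$ is orthogonal (because $J_\alpha^2=-\id_\mfr{v}$) and skew-symmetric for the nilsoliton inner product.

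Next I would produce the basis from the standard model of a real Clifford module. Decomposing $\mfr{v}$ into irreducible $\mrm{Cl}(\mfr{z},\eta)$-modules, each summand admits an explicit realization---built by iterated tensor (Kronecker) products of the real $2\times 2$ signed-permutation matrices together with the diagonal sign matrix $\mrm{diag}(1,-1)$---in which every generator $J_\alpha$ is a signed permutation matrix. Equivalently, letting the Clifford algebra act on itself by left multiplication and taking the monomial basis $\{\prod_{\alpha\in S}J_\alpha : S\subseteq\{1,\dots,p\}\}$, multiplication by $J_\beta$ carries each basis element to $\pm$ the one indexed by the symmetric difference $S\,\triangle\,\{\beta\}$. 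Assembling the summand bases yields an orthonormal basis $\{U_i\}$ of $\mfr{v}$ in which $J_1,\dots,J_p$ are simultaneously signed permutations.

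With such a basis the niceness of $\mfr{n}$ is immediate. By \eqref{eq:j-map-def} one has $[U_i,U_j]=\sum_\gamma \langle J_\gamma U_i,U_j\rangle\,Z_\gamma$, and since $J_\gamma U_i=\pm U_{\pi_\gamma(i)}$ each structure constant $\langle J_\gamma U_i,U_j\rangle$ lies in $\{0,\pm 1\}$; these are exactly the quantities entering the curvature sums of the previous subsection. Moreover the vectors $U_i, J_1U_i,\dots,J_pU_i$ are mutually orthonormal: for $\alpha\neq\beta$, $\langle J_\alpha U_i,J_\beta U_i\rangle=-\langle U_i,J_\alpha J_\beta U_i\rangle=0$, since $J_\alpha J_\beta$ is skew. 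Hence for fixed $i,j$ at most one $\gamma$ gives a nonzero bracket coefficient, so each $[U_i,U_j]$ is $0$ or $\pm$ a single $Z_\gamma$. The disjointness requirement in the definition of a nice basis (\cite{LauretWill2013}) follows because each $J_\gamma$ is a bijection: if $[U_i,U_j]$ and $[U_k,U_\ell]$ were both nonzero multiples of the same $Z_\gamma$ with overlapping index sets, then $J_\gamma$ would map equal vectors to equal vectors and force $\{i,j\}=\{k,\ell\}$.

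Finally I would adjoin the Damek--Ricci direction $A$. Since the nilsoliton has eigenvalue type $(1,2)$, Lemma~\ref{lem:1-dim-D} gives $\ad_A=\tfrac{1}{\sqrt{\tr D}}D$ with $D$ scalar on each of $\mfr{v}$ and $\mfr{z}$, so $[A,U_i]$ and $[A,Z_\alpha]$ are scalar multiples of the single basis elements $U_i$ and $Z_\alpha$ and create no new index conflicts; thus the full basis $\{U_i,Z_\alpha,A\}$ remains nice. The one delicate point---and the step I expect to be the main obstacle---is the construction in the second paragraph: arranging that all $p$ generators become signed permutations in one common \emph{orthonormal} basis, which requires matching the given invariant inner product on each irreducible summand with the standard one (using Schur's lemma to control the commutant $\R$, $\C$, or $\mbb{H}$) so that the intertwining isomorphism can be taken orthogonal. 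Everything else is bookkeeping.
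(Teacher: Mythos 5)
Your proposal is correct and follows essentially the same route as the paper's proof: realize each $J_\alpha$ as a skew-symmetric signed permutation matrix via the Clifford module structure (so all structure constants lie in $\{0,\pm1\}$), and use the anticommutation relations $J_\alpha J_\beta + J_\beta J_\alpha = -2\delta_{\alpha\beta}\,\id_\mfr{v}$ to show that for fixed $i,j$ at most one $\alpha$ gives a nonzero $c_{ij}^\alpha$. Your version is somewhat more scrupulous than the paper's --- you explicitly verify the disjointness clause of niceness, handle the extension by $A$, and flag the genuine (resolvable) subtlety of matching the invariant inner product on each irreducible summand to one in which the generators are simultaneously signed permutations, a point the paper passes over silently.
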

 
\begin{proof}
Recall that when $p \not \equiv 3\pmod{4}$, there is one irreducible Clifford module $\mfr{d}$ over $\mrm{Cl}(\mfr{z},\eta)$.  Every Clifford module $\mfr{v}$ over $\mrm{Cl}(\mfr{z},\eta)$ is isomorphic to a sum $\mfr{v} = \oplus^k \mfr{d}$.  If $p  \equiv 3\pmod{4}$, then there are two non-equivalent irreducible Clifford modules over $\mrm{Cl}(\mfr{z},\eta)$, $\mfr{d}_1$ and $\mfr{d}_1$.  Every Clifford module $\mfr{v}$ over $\mrm{Cl}(\mfr{z},\eta)$ is isomorphic to a sum $\mfr{v} = (\oplus^k \mfr{d}_1) \oplus (\oplus^\ell \mfr{d}_2)$.
 
Also recall that a basis for the Clifford algebra $\mrm{Cl}(\mfr{z},\eta)$ is
\[ \{ J_{\alpha_1} \cdots J_{\alpha_k} \,|\, 1 \leq \alpha_1 < \cdots < \alpha_k \leq p, 1 \leq k \leq q \}. \]
As the structure constants of $\mfr{n}$ are determined from the $J$-map by the relation
\[ c_{ij}^\alpha = (J_\alpha)_i^j, \]
we simply need to describe the matrices for the $J_\alpha$ with respect to the above basis of $\mrm{Cl}(\mfr{z},\eta)$.  To do this, we examine the action on the basis by each $J_\alpha$, which is Clifford multiplication by $J_\alpha$.  By examining the possible products $J_\alpha J_{\alpha_1} \cdots J_{\alpha_k}$, it is easy to see that $J_\alpha$ acts as a permutation of the basis, except possibly with signs introduced from the Clifford relations:
\[ J_\alpha J_\beta + J_\beta J_\alpha = -2 \delta_{\alpha \beta} \, 1. \]
Moreover, each $J_\alpha$ is skew-symmetric, so this means the matrix for each $J_\alpha$ is a skew-symmetric permutation matrix.  
 
Next we claim that for any fixed $i, j$, there can be at most one $\alpha$ such that $c_{ij}^\alpha$ is non-zero.  Indeed, suppose that $c_{ij}^\alpha = \pm 1$ and let $\beta$ be any other index besides $\alpha$.  By the Clifford relations, we have
\begin{align*}
0
&= \langle J_\alpha J_\beta U_i,U_j \rangle + \langle J_\beta J_\alpha U_i, U_j \rangle \\
&= \sum_k \Big( (J_\beta)_i^k (J_\alpha)_k^j + (J_\alpha)_i^k (J_\beta)_k^j \Big) \\
&= \sum_k \Big( \pm c_{ik}^\beta \delta_{jk}  \pm \delta _{ik} c_{kj}^\beta \Big) \\
&= \pm 2 c_{ij}^\beta,
\end{align*}
and so $c_{ij}^\beta = 0$ for all $\beta \ne \alpha$.  (Of course, it could happen that there are $i$ and $j$ such that $c_{ij}^\alpha$ is always zero.)  

Finally, if the representation is not irreducible, then it is a sum of the irreducible ones just described, and all the properties still hold.
 \end{proof}

Using a nice basis, it is not hard to verify the following estimates.
\begin{align*}
\sum_{j\alpha} |\langle J_\alpha U_j, J_\beta U_i \rangle| 
&\leq p 
& & & 
\sum_{j\alpha} |\langle J_\alpha U_i, J_\beta U_j \rangle| 
&\leq p \\
\sum_{j\alpha} |\langle J_\alpha U_i, U_j \rangle| 
&\leq p 
& & & 
\sum_j |\langle J_\alpha U_i, U_j \rangle| 
&\leq 1 
\end{align*}
If we set $A=B=1/2$ and $C=D=E=1$, the coefficients in $\langle \Ro h,h \rangle$ become the following.
\begin{align*}
C_{i}
&\leq \frac{3}{4} p + \frac{1}{16}\left( \frac{1}{A} + \frac{1}{B} \right) q + \frac{1}{4} 
    = \frac{1}{4} q + \frac{3}{4} p + \frac{1}{4} \\
C_{i\beta}
&\leq \frac{3}{4} p + \frac{3C}{4} + \frac{1}{2} 
    = \frac{3}{4} p + \frac{5}{4} \\
C_{i0}
&\leq \frac{3}{4C} p + \frac{1}{4} 
    = \frac{3}{4} p + \frac{1}{4} \\
C_{\alpha}
&\leq \frac{A+B}{4} q + \frac{3}{4D} + 1 
    = \frac{1}{4} q + \frac{7}{4} \\ 
C_{\alpha0}
&= 1 \\
C_{00}
&\leq \frac{3D}{4} p + 1
    = \frac{3}{4} p + 1
\end{align*}
We use that $q \geq 4$ to see rewrite
\[
C_{i\beta}
\leq \frac{3}{4} p + \frac{5}{4}
= \frac{3}{4} p + \frac{1}{4} + 1 
\leq \frac{1}{4} q + \frac{3}{4} p + \frac{1}{4}.
\]
and 
\[ 
C_{00} =\frac{3}{4} p + 1 \leq \frac{3}{4} p + \frac{1}{4} q < \frac{1}{4} q + p.
\]
It is easy to see that $\frac{3}{4} p + \frac{1}{4} < p$ when $p \geq 1$, so we have that each coefficient is less than $p+q/4$.  See Table \ref{table:damek-ricci} for the remaining cases.  The notation $\mfr{h}_{p,q}$ refers to the generalized Heisenberg algebra with $\dim \mfr{z}=p$ and $\dim \mfr{v}=q$.

\begin{table}[t]
\begin{tabular}{|c|cccc|}
\hline
$\mfr{h}_{p,q}$ & $A$ & $B$ & $C$ & $D$ \vv \\
\hline
$\mfr{h}_{3,4}$ & $\frac{13}{8}$ & $\frac{3}{4}$ & $\frac{17}{15}$ & $\frac{5}{4}$ \vv \\
$\mfr{h}_{3,8}$ & $\frac{1}{2}$ & $1$ & $\frac{33}{19}$ & $1$ \vv \\ \vv
$\mfr{h}_{4,8}$ & $\frac{7}{4}$ & $\frac{15}{32}$ & $\frac{133}{69}$ & $\frac{3}{2}$ \\
$\mfr{h}_{5,8}$ & $1$ & $1$ & $\frac{19}{9}$ & $1$ \vv \\
$\mfr{h}_{5,16}$ & $\frac{51}{32}$ & $\frac{19}{64}$ & $\frac{17}{21}$ & $2$ \vv \\
$\mfr{h}_{6,8}$ & $\frac{21}{8}$ & $\frac{9}{16}$ & $\frac{71}{31}$ & $\frac{3}{2}$ \vv \\
$\mfr{h}_{6,16}$ & $\frac{7}{4}$ & $\frac{11}{32}$ & $\frac{139}{39}$ & $\frac{3}{2}$ \vv \\
$\mfr{h}_{7,8}$ & $3$ & $\frac{5}{8}$ & $\frac{37}{15}$ & $\frac{3}{2}$ \vv \\
$\mfr{h}_{7,16}$ & $1$ & $1$ & $\frac{161}{43}$ & $1$ \vv \\
$\mfr{h}_{7,24}$ & $\frac{107}{64}$ & $\frac{139}{512}$ & $\frac{1257}{51}$ & $\frac{9}{4}$ \vv \\
$\mfr{h}_{8,16}$ & $\frac{141}{64}$ & $\frac{53}{128}$ & $\frac{553}{141}$ & $\frac{3}{2}$ \vv \\
$\mfr{h}_{9,32}$ & $\frac{27}{16}$ & $\frac{133}{512}$ & $\frac{449}{67}$ & $2$ \vv \\
\hline 
\end{tabular}
\caption{Constants that give linear stability of Einstein metrics on ``low-dimensional'' Damek-Ricci spaces.}
\label{table:damek-ricci}
\end{table}

\begin{rem}
Setting aside the results of the previous section, we note that we can use Proposition \ref{prop:ext-stab} to show that the generalized Heisenberg nilsolitons are linearly stable. 
\end{rem}

\subsection{Free two-step nilpotent algebras}\label{subsec:3step-free}

Let $\mfr{n}$ be the free 2-step nilpotent Lie algebra on $q$ generators, $\mfr{F}_2(q) = \R^q \oplus \mfr{so}(q,\R)$, as in Example \ref{ex:free-2step-def}.  By results of Eberlein \cite{Eberlein2008}, the Ricci tensor is optimal with $b=\half$ and $p= \half q(q-1)= \dim \mfr{so}(n)$, and the soliton derivation is easily computed:
\[ \Ric_\mfr{n} = \frac{1}{2} \left(
\begin{BMAT}{ccc.ccc}{ccc.ccc}
 &                        & & &                 &  \\
 & -(q-1) \id_\mfr{v}     & & &                 &  \\
 &                        & & &                 &  \\
 &                        & & &                 &  \\
 &                        & & & \id_\mfr{z}     &  \\
 &                        & & &                 &  \\
\end{BMAT} \right),
\quad
D = \frac{1}{2} q \left(
\begin{BMAT}{ccc.ccc}{ccc.ccc}
 &             & & &               &  \\
 & \id_\mfr{v} & & &               &  \\
 &             & & &               &  \\
 &             & & &               &  \\
 &             & & & 2 \id_\mfr{z} &  \\
 &             & & &               &  \\
\end{BMAT} \right)\]
and $-\lambda = q-\half$.  

We use the following orthonormal basis for $\mfr{n}$
\[ \{U_1,\dots,U_q,Z_1,\dots,Z_p\}, \]
where the $U_i$ are an orthonormal basis of $\R^q$ and the $Z_\alpha$ are the usual basis for $\mfr{so}(q)$, properly normalized.  Since each $Z_\alpha$ is a $q\times q$ matrix, write $\alpha = \{k,\ell\}$ for $1 \leq k<\ell \leq q$.  The result is that given any two $1\leq i<j\leq q$ there is precisely one $1 \leq \alpha \leq p$ such that
\[ [U_i,U_j] = Z_\alpha. \]
Similarly, given $\alpha$ and $i$, the $J$-maps satisfy
\[ J_\alpha U_i = \begin{cases}
0   & \text{if } i \not\in \alpha \\
U_j & \text{if } \alpha = \{i,j\}
\end{cases}. \]
From these properties, it is easy to derive the following estimates.
\begin{align*}
\sum_{j\alpha} |\langle J_\alpha U_j, J_\beta U_i \rangle| 
&\leq q-1 
& & &
\sum_{j\alpha} |\langle J_\alpha U_i, J_\beta U_j \rangle| 
&\leq 1  \\
\sum_{j\alpha} |\langle J_\alpha U_i, U_j \rangle| 
&\leq q-1 
& & &
\sum_j |\langle J_\alpha U_i, U_j \rangle| 
&\leq 1 
\end{align*}
Using these estimates, and setting 
\[ A = \frac{1}{2}, \quad B = \frac{25}{64}, \quad C = \frac{7}{4}, \quad D = \frac{4}{3}, \]
the coefficients in $\langle \Ro h,h \rangle$ become the following.
\begin{align*}
C_{i}
&\leq \frac{1}{4}(3+A+B)(q-1) + \frac{1}{2q} 
    = -\frac{249}{256}+\frac{1}{2q}+\frac{249q}{256} \\
C_{i\beta}
&\leq \frac{1}{2} (q-1) + \frac{1}{4} + \frac{3C}{2 \sqrt{2q}} + \frac{1}{q} 
    = -\frac{1}{4}+\frac{1}{q}+\frac{21}{8 \sqrt{2} \sqrt{q}}+\frac{q}{2} \\
C_{i0}
&\leq \frac{3}{2C \sqrt{2q}} (q-1) + \frac{1}{2q} 
    = -\frac{3 \sqrt{2}}{7 \sqrt{q}}+\frac{3 \sqrt{2} \sqrt{n}}{7} \\
C_{\alpha}
&\leq \frac{1}{2} \left( \frac{1}{A} + \frac{1}{B} \right) + \frac{3}{2D q} + \frac{2}{q}
    = \frac{57}{25}+\frac{25}{8 q} \\ 
C_{\alpha0}
&\leq \frac{2}{q} \\
C_{00}
&\leq \frac{3D}{4} (q-1) + \frac{1}{2q}
    = -1+\frac{1}{2 q} + q
 \end{align*}
It is not hard to see that when $q \geq 4$, each is less than $q-\half$.  

The case $q=2$ is already covered, since $\mfr{n} = \R^2 \oplus \mfr{so}(2,\R) \cong \mfr{nil}^3$.  The case $q=3$ is already covered, since $\mfr{n} = \R^3 \oplus \mfr{so}(3,\R)$ corresponds with the 24th six-dimensional algebra from Table \ref{table:solitons}.  This shows strict linear stability of the Einstein metrics.  

\begin{rem}
As before, we note that stability of the nilsolitons can be obtained by Proposition \ref{prop:ext-stab}. 
\end{rem}

\subsection{Negatively curved spaces}

Here we give a brief account of some results on certain negatively-curved homogeneous spaces, and explain their relevance to our stability results, namely Proposition \ref{prop:sec-stab}.

In \cite{EberleinHeber1996}, Eberlein and Heber consider homogeneous spaces with negatively quarter-pinched sectional curvature.  It is well-known that any homogeneous space with non-positive curvature can be represented as a simply connected solvable Lie group with a left-invariant metric.  On the Lie algebra level, the authors show that, in fact, any solvable $(\mfr{s},\langle \cdot,\cdot \rangle)$ with negatively quarter-pinched curvature is isometric to an algebra that is closely related to those that we have studied above.  Namely, the algebra is either two-step solvable, three-step solvable with optimal Ricci tensor on the nilradical, or a certain combination of these two types (called an amalgamated product) \cite[Theorem 5.1]{EberleinHeber1996}.  Whenever the metric on such a space is Einstein or a solvsoliton, Proposition \ref{prop:sec-stab} will ensure linear stability.

The authors also give a characterization of rank-one symmetric spaces.  Namely, let $(\mfr{s},\langle \cdot,\cdot \rangle)$ be a solvable Lie algebra with negatively quarter-pinched sectional curvature.  Then the space is rank-one symmetric if and only if it is Einstein \cite[Theorem 7.1]{EberleinHeber1996}.  This narrows the range of spaces for which Proposition \ref{prop:sec-stab} is useful, at least in the quarter-pinched homogeneous Einstein case.  For example, symmetric spaces of non-compact type are shown by Bamler to be stable under curvature-normalized Ricci flow \cite{Bamler2010-sym}.  

If we do not assume quarter-pinched curvature, the Proposition is still useful.  For example, Heber has obtained large families of negatively-curved Einstein metrics that are deformations of certain hyperbolic spaces.  Namely, for each $m \geq 2$, there is an $(8m^2-6m-8)$-dimensional family of negatively-curved Einstein metrics containing the quaternionic hyperbolic space $\mbb{H}H^{m+1}$.  Also,
there is an $84$-dimensional family of negatively-curved Einstein metrics containing the Cayley hyperbolic plane $\mbb{C}aH^2$; see \cite[Theorem J]{Heber1998}.  These metrics are all stable by Proposition \ref{prop:sec-stab}.  (Note that both $\mbb{H} H^{m+1}$ and $\mbb{C}a H^2$ are Damek-Ricci spaces; see \cite[Subsection 4.1.9]{Berndt1995}.)

In the case of only non-positive curvature, Proposition \ref{prop:sec-stab} can only give weak linear stability for Einstein metrics.  For example, the Einstein metrics in Section \ref{sec:3step} all have non-positive curvature, which is why we instead appeal to Corollary \ref{cor:alg-stab-cond}.  For solvsolitons, the derivation only needs to be positive, so the situation is more promising, as we saw in Section \ref{sec:abelian}.

\section{Low-dimensional examples}\label{sec:low-dim}

\begin{table}[t]
\begin{tabular}{|ccccccc|ccc|}
\hline
 \multicolumn{7}{|c|}{Nilsoliton}     & \multicolumn{3}{c|}{Solvable extension} \\    
dim  &  number &  step  &  $\lambda$ &  $\tr D$  &  max $Q$ &  $\overset{?}{<} \half \tr D$ &  dim  &  max $\Ro$ &  $\overset{?}{<} -\lambda$ \\
\hline
1 & 1 & 1 & -1   & 1     & 0     & \checkmark & 2 & 1     & (weak) \\
2 & 1 & 1 & -1   & 2     & 0     & \checkmark & 3 & 0.5   & \checkmark \\
3 & 1 & 2 & -1.5 & 4     & 0.569 & \checkmark & 4 & 1     & \checkmark \\
3 & 2 & 1 &   -1 & 3     & 0     & \checkmark & 4 & 0.333 & \checkmark \\
4 & 1 & 2 & -1.5 & 5     & 0.783 & \checkmark & 5 & 1.355 & \checkmark \\
4 & 2 & 2 & -1.5 & 5.5   & 0.569 & \checkmark & 5 & 0.932 & \checkmark \\
4 & 3 & 1 & -1   & 4     & 0     & \checkmark & 5 & 0.25  & \checkmark \\
5 & 1 & 4 & -2   & 8.333 & 1.174 & \checkmark & 6 & 1.768 & \checkmark \\
5 & 2 & 4 & -5.5 & 22.5  & 2.601 & \checkmark & 6 & 4.322 & \checkmark \\
5 & 3 & 3 & -3.5 & 15    & 1.751 & \checkmark & 6 & 2.79  & \checkmark \\
5 & 4 & 2 & -2   & 9     & 1.106 & \checkmark & 6 & 1     & \checkmark \\
5 & 5 & 3 & -6   & 25    & 2.026 & \checkmark & 6 & 4.122 & \checkmark \\
5 & 6 & 2 & -2   & 9     & 0.58  & \checkmark & 6 & 1.249 & \checkmark \\
5 & 7 & 2 & -1.5 & 7     & 0.569 & \checkmark & 6 & 0.893 & \checkmark \\
5 & 8 & 3 & -1.5 & 6.5   & 0.783 & \checkmark & 6 & 1.306 & \checkmark \\
5 & 9 & 1 & -1   & 5     & 0     & \checkmark & 6 & 0.2   & \checkmark \\
\hline 
\end{tabular}
\caption{Linear stability of nilsolitons of dimension five or less and corresponding 1-dimensional solvable Einstein extensions}
\label{table:solitons}
\end{table}

\begin{table}
\begin{tabular}{|ccccccc|ccc|}
\hline
 \multicolumn{7}{|c|}{Nilsoliton}     & \multicolumn{3}{c|}{Solvable extension} \\    
dim  &  number &  step  &  $\lambda$ &  $\tr D$  &  max $Q$ &  $\overset{?}{<} \half \tr D$ &  dim  &  max $\Ro$ &  $\overset{?}{<} -\lambda$ \\
\hline
6 & 1  & 5 & -26   & 121    & 14.514 & \checkmark & 7 & 20.412 & \checkmark \\
6 & 2  & 5 & -2    & 9.333  & 1.158  & \checkmark & 7 & 1.725  & \checkmark \\
6 & 3  & 5 & -11   & 56     & 6.149  & \checkmark & 7 & 8.83   & \checkmark \\
6 & 4  & 5 & -71.5 & 346.5  & 35.169 & \checkmark & 7 & 58.873 & \checkmark \\
6 & 5  & 5 & -17   & 84.5   & 7.831  & \checkmark & 7 & 12.479 & \checkmark \\
6 & 6  & 4 & -2    & 10     & 0.806  & \checkmark & 7 & 1.412  & \checkmark \\
6 & 7  & 4 & -10   & 49     & 4.564  & \checkmark & 7 & 7.71   & \checkmark \\
6 & 8  & 4 & -10   & 49     & 4.881  & \checkmark & 7 & 6.77   & \checkmark \\
6 & 9  & 4 & -2.5  & 12.25  & 1.369  & \checkmark & 7 & 2.086  & \checkmark \\
6 & 10 & 4 & -2.5  & 12.667 & 1.355  & \checkmark & 7 & 2.077  & \checkmark \\
6 & 11 & 4 & -1.44 & 7.29   & 0.732  & \checkmark & 7 & 1.166  & \checkmark \\
6 & 12 & 4 & -5.5  & 28     & 2.601  & \checkmark & 7 & 4.196  & \checkmark \\
6 & 13 & 4 & -6    & 31     & 3.523  & \checkmark & 7 & 5.212  & \checkmark \\
6 & 14 & 3 & -5.5  & 28     & 2.836  & \checkmark & 7 & 4.165  & \checkmark \\
6 & 15 & 3 & -2    & 10     & 1.131  & \checkmark & 7 & 1.494  & \checkmark \\
6 & 16 & 3 & -2    & 10     & 1.144  & \checkmark & 7 & 1.709  & \checkmark \\
6 & 17 & 3 & -6    & 31     & 2.026  & \checkmark & 7 & 4.02   & \checkmark \\
6 & 18 & 3 & -3.5  & 18     & 1.217  & \checkmark & 7 & 2.022  & \checkmark \\
6 & 19 & 3 & -6.5  & 33.5   & 2.336  & \checkmark & 7 & 4.464  & \checkmark \\
6 & 20 & 3 & -7    & 36     & 2.454  & \checkmark & 7 & 5.474  & \checkmark \\
6 & 21 & 3 & -4    & 21     & 1.755  & \checkmark & 7 & 2.69   & \checkmark \\
6 & 22 & 3 & -1.5  & 7.75   & 0.646  & \checkmark & 7 & 1.217  & \checkmark \\
6 & 23 & 3 & -3.5  & 18.5   & 1.537  & \checkmark & 7 & 2.67   & \checkmark \\
6 & 24 & 2 & -2.5  & 13.5   & 0.581  & \checkmark & 7 & 1.071  & \checkmark \\
6 & 25 & 3 & -6    & 31     & 3.412  & \checkmark & 7 & 4.678  & \checkmark \\
6 & 26 & 3 & -1.5  & 8      & 0.783  & \checkmark & 7 & 1.278  & \checkmark \\
6 & 27 & 3 & -3.5  & 18.5   & 1.751  & \checkmark & 7 & 2.705  & \checkmark \\
6 & 28 & 2 & -3    & 16     & 1.137  & \checkmark & 7 & 1.75   & \checkmark \\
6 & 29 & 2 & -2.5  & 13.5   & 1.094  & \checkmark & 7 & 1.418  & \checkmark \\
6 & 30 & 2 & -1.5  & 8      & 0.569  & \checkmark & 7 & 0.875  & \checkmark \\
6 & 31 & 2 & -2    & 11     & 0.580  & \checkmark & 7 & 1.066  & \checkmark \\
6 & 32 & 2 & -2    & 11     & 1.106  & \checkmark & 7 & 0.955  & \checkmark \\
6 & 33 & 2 & -1.5  & 8.5    & 0.569  & \checkmark & 7 & 0.868  & \checkmark \\
6 & 34 & 1 & -1    & 6      & 0      & \checkmark & 7 & 0.167  & \checkmark \\
\hline 
\end{tabular}
\caption{Linear stability of nilsolitons of dimension six and corresponding 1-dimensional solvable Einstein extensions}
\label{table:solitons2}
\end{table}

Nilpotent Lie groups admitting nilsoliton metrics have been completely classified in low dimensions; a list of nilsolitons in dimensions up to six appears in \cite{Will2011} (progress has also been made in dimensions 7 and 8 \cite{FernandezCulma2012,Arroyo2011,KadiogluPayne2013}).  In this section, we show that all such nilsolitons are linearly stable, and each has a one-dimensional Einstein extension (described following Theorem \ref{thm:lauret}) that is linearly stable as well.  See Tables \ref{table:solitons} and \ref{table:solitons2} for a list of these nilsolitons (as described in \cite{Will2011}), which includes the dimension, step, soliton constant, trace of the soliton derivation, and largest eigenvalue of the operator $Q$.  The tables also includes the largest eigenvalue of the operator $\Ro$ for each 1-dimensional solvable Einstein extension.

The method to determine stability is to use Corollary \ref{cor:alg-stab-cond} and see that the required bounds on the relevant operators are indeed satisfied.  Using a computer algebra system, it is possible to input the structure constants for a Lie algebra and from them calculate all related geometric quantities.  In particular, with respect to an orthonormal basis for the space of symmetric 2-tensors, one can describe $\Ro$ and $Q$ as matrices and calculate their respective eigenvalues.  While the eigenvalues in Tables \ref{table:solitons} and \ref{table:solitons2} are only approximate, they can be described to great precision as roots of characteristic polynomials.

\begin{prop}\label{prop:low-dim}
Each nilsoliton metric on a nilpotent Lie algebra of dimension six or less is strictly linearly stable.  Each Einstein metric on the corresponding 1-dimensional extension is strictly linearly stable.
\end{prop}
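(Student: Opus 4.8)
The plan is to reduce both assertions to a finite verification over the classification of low-dimensional nilsolitons in \cite{Will2011}, checking in each case the sufficient conditions already established. For the nilsolitons themselves the relevant criterion is Corollary \ref{cor:alg-stab-cond}: it suffices to show that the largest eigenvalue of the quadratic form $Q(h) = \langle \Ro h + \Ric \circ h, h \rangle$, acting on symmetric $2$-tensors, is strictly less than $\half \tr D$. For the one-dimensional Einstein extensions the relevant criterion is the Einstein case of Proposition \ref{prop:stab-conds}, namely that the largest eigenvalue of $\Ro$ (again acting on symmetric $2$-tensors) is strictly less than $-\lambda$. Since \cite{Will2011} provides a complete, finite list of the nilpotent Lie algebras admitting nilsolitons in dimension at most six, both claims become a matter of computing two operators for each algebra and comparing eigenvalues against an explicit bound.

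First I would take, for each algebra on the list, its structure constants in an orthonormal basis adapted to the nilsoliton inner product, together with the soliton data $\lambda$ and $D$ (equivalently $\tr D$) recorded in Tables \ref{table:solitons} and \ref{table:solitons2}. From the structure constants one assembles $\Ro$ and $\Ric \circ (\cdot)$ as explicit symmetric matrices on the $\binom{n+1}{2}$-dimensional space of symmetric $2$-tensors via \eqref{eq:Ro-coords} and \eqref{eq:Rico-coords}, forms the matrix of $Q$, and computes its largest eigenvalue. Comparing this against $\half \tr D$ yields the column of checkmarks in the tables, and by Corollary \ref{cor:alg-stab-cond} each strict inequality gives strict linear stability of the corresponding nilsoliton.

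For the Einstein extensions I would build, via Lemma \ref{lem:1-dim-D} and Theorem \ref{thm:lauret}(a), the solvable algebra $\mfr{s} = \mfr{n} \rtimes \R A$ with its Einstein inner product, where $\ad_A = D/\sqrt{\tr D}$ determines the remaining structure constants. One then forms the matrix of $\Ro$ on symmetric $2$-tensors for $\mfr{s}$ and compares its largest eigenvalue with $-\lambda$; these values appear in the final columns of the tables, and Proposition \ref{prop:stab-conds} converts each strict inequality into strict linear stability of the Einstein extension.

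The main obstacle is not conceptual but logistical: for the roughly forty algebras involved these computations are infeasible by hand, so they are carried out with a computer algebra system, and I expect the bulk of the work to be organizing the structure constants and the tensor-space bases correctly rather than proving anything new. To keep the argument rigorous I would note that the eigenvalues listed in the tables are numerical approximations but can each be characterized exactly as a root of the characteristic polynomial of the relevant matrix, so the strict inequalities hold rigorously. The one genuine exception surfaces precisely here: the Einstein extension of the $1$-dimensional abelian nilsoliton is $H^2$, for which the largest eigenvalue of $\Ro$ equals $-\lambda = 1$, so Proposition \ref{prop:stab-conds} yields only \emph{weak} linear stability; this is the sole case not covered by the strict statement, consistent with the discussion in Subsection \ref{subsec:abelian-einstein}.
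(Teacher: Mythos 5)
Your proposal matches the paper's proof essentially verbatim: both reduce the statement to a finite computer-algebra verification over Will's classification, checking $\max Q < \half \tr D$ for each nilsoliton via Corollary \ref{cor:alg-stab-cond} and $\max \Ro < -\lambda$ for each Einstein extension via Proposition \ref{prop:stab-conds}, with exact eigenvalues available as roots of characteristic polynomials. Your observation about the $H^2$ exception (only weak stability) is also exactly the caveat the paper records in its introduction and Table \ref{table:solitons}.
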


We finish examples, including two continuous families of stable metrics.

\begin{ex}\label{ex:mu11}
Here we consider the 6-dimensional algebra listed as \#11 in Table \ref{table:solitons2}.  This does appear in the classifications in \cite{Will2003,Will2011}, but the explicit nilsoliton is incorrect; see \cite[Remark 5.3]{LauretWill2013}.  What follows is an alternate description.

With respect to an orthonormal basis $\{X_1,\dots,X_6\}$, the non-zero structure constants are 
\begin{align*}
[X_1,X_2] &= X_4  & & &  [X_1,X_4] &= X_5  & & &  [X_1,X_5] &= X_6 \\
[X_2,X_3] &= X_6  & & &  [X_2,X_4] &= X_6  & & &  
\end{align*}
Represent this structure as $\mu_{11} \in \mcl{V}_6$, where $\mcl{V}_6 \subset \wedge^2 (\R^6)^* \otimes \R^6$ is the variety of 6-dimensional nilpotent bracket structures.  The Ricci tensor is
\small
\[ \Ric = -\frac{1}{2} 
\begin{pmatrix}
3 &   &   &   &   & \\
  & 3 &   &   &   & \\
  &   & 1 & 1 &   & \\
  &   & 1 & 1 &   & \\
  &   &   &   & 0 & \\
  &   &   &   &   & -3
\end{pmatrix}. \]
\normalsize
Consider
\small
\[ G := 
\left(
\begin{array}{cccccc}
 \frac{\sqrt{10}}{3} &   &   &   &   &   \\
   & \sqrt{\frac{5}{3}} &   &   &   &   \\
   &   & 0  & \sqrt{\frac{2}{3}} &   &   \\
   &   & 1 & \frac{1}{\sqrt{6}} &   &   \\
   &   &   &   & 1 &   \\
   &   &   &   &   & \sqrt{\frac{3}{5}}
\end{array}
\right) \in \GL(6,\R), \]
\normalsize
so that under the action of $\GL(6,\R)$ on $\mcl{V}_6$ given by 
\[ G \cdot \mu(X,Y) = G \mu(G^{-1}X, G^{-1}Y), \]
we have an isomorphic Lie algebra $G \cdot \mu_{11}$ whose non-zero structure constants are
\begin{align*}
[X_1,X_2] &= \frac{3}{5} \sqrt{\frac{3}{2}} X_3  + \frac{3}{10} X_4 & & & 
[X_1,X_3] &= \frac{3}{\sqrt{10}} X_5 & & &
[X_1,X_5] &= \frac{3}{5} \sqrt{\frac{3}{2}} X_6 \\
[X_2,X_3] &= \frac{3}{10} X_6 & & &
[X_2,X_4] &= \frac{3}{5} \sqrt{\frac{3}{2}} X_6 
\end{align*}
This new bracket representation results in a diagonal Ricci tensor, and one can check that the soliton derivation and constant are as follows:
\[ \begin{aligned}
\Ric &= \frac{1}{200} \mrm{diag}(-207,-126,-45,-45,36,117) \\
D &= \frac{1}{200} \mrm{diag}(81,162,243,243,324,405) \\
\lambda &= -\frac{36}{25}
\end{aligned} \]
The maximum eigenvalues of Q (and of $\Ro$ for the one-dimensional Einstein extension) can now be computed, and appear in Table \ref{table:solitons2}.
\end{ex}

\begin{ex}\label{ex:jorgescurve}
Lauret has produced a curve of pairwise non-isometric $8$-dimensional Einstein solvmanifolds \cite{Lauret2002} that depend on a parameter $0<t<1$.  With respect to an orthonormal basis $\{X_1,\dots,X_7\}$, the corresponding codimension-one nilradical has Lie bracket relations
\begin{align*}
[X_1,X_2] &= (1-t)^{1/2} X_3  & & &  [X_2,X_3] &= X_5 \\
[X_1,X_3] &= X_4              & & &  [X_2,X_4] &= X_6 \\
[X_1,X_4] &= t^{1/2} X_4      & & &  [X_2,X_5] &= t^{1/2} X_7 \\
[X_1,X_5] &= X_6              & & &  [X_3,X_4] &= (1-t)^{1/2} X_7 \\
[X_1,X_6] &= X_7              & & &  &
\end{align*}
This is a nilsoliton:
\[ \begin{aligned}
\Ric &= - \frac{1}{2} \mrm{diag}(4,3,2,1,0,-1,-2) \\
D &=  \frac{1}{2} \mrm{diag}(1,2,3,4,5,6,7) \\
\lambda &= -\frac{5}{2}
\end{aligned} \]
Note that these quantities do not depend on $t$.  We can form the Einstein extensions, and compute the curvatures numerically and see that the eigenvalues of $\Ro_t$ are less than $-\lambda = \smfrac{5}{2}$ for each $0<t<1$, so we have strict linear stability.  See Figure \ref{fig:curve}.  Linear stability of the nilsolitons follows from Proposition \ref{prop:ext-stab}, since 
\[ 3.5 = \max\{d_i\} < \frac{1}{2+\sqrt{2}} \tr D = \frac{14}{2 + \sqrt{2}} \cong 4.1. \]
\end{ex}

\begin{figure}[t]
\includegraphics[width=0.8\textwidth]{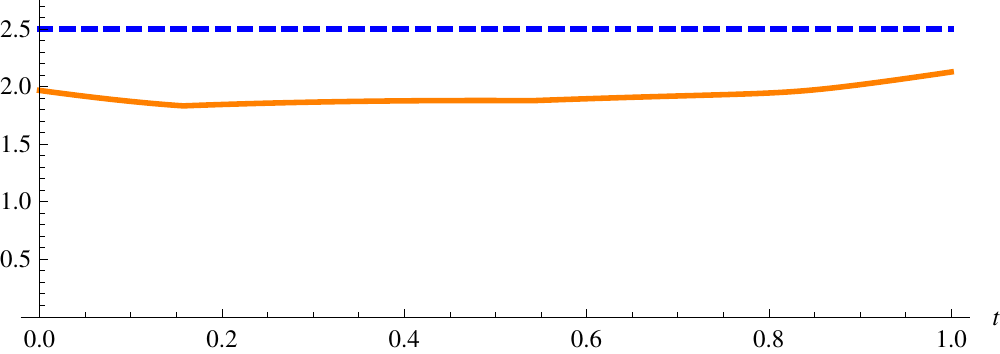}
\caption{Stability of the curve of Einstein metrics in Example \ref{ex:jorgescurve}.  The solid line is $\max \langle \Ro_t h,h \rangle/|h|^2$ and the dashed line is $-\lambda$.}
\label{fig:curve}
\end{figure}

\begin{ex}\label{ex:nil3-solv-ext}
Here we consider a family of non-Einstein solvsolitons.  Recall that for a given nilsoliton there is a moduli space of corresponding solvsoliton extensions, and Will has described all such moduli spaces in low dimensions \cite{Will2011}.  We consider the 1-parameter space of non-Einstein solvsolitons constructed from $\mfr{nil}^3$, found in \cite[Section 4.1]{Will2011}.  For $t \in (-1/\sqrt{2},1/\sqrt{2})$, we have non-zero bracket relations
\begin{align*}
[X_1,X_2] = 3 \quad 
[A,X_1] = t X_1 \quad 
[A,X_2] = \sqrt{1-t^2} X_2 \quad
[A,X_3] = (t+\sqrt{1-t^2}) X_3 
\end{align*}
and the solvsoliton metrics are given by
\[ \langle X_i,X_j \rangle = \delta_{ij} \quad \langle A,A \rangle = \frac{4}{3}(1 + t \sqrt{1-t^2}). \]
One can compute all curvatures and see that for each $t$, $Q_t(h) < \half \tr D_t |h|^2$ for all non-zero $h$, so all solvsolitons are stable.  See Figure \ref{fig:nil3-moduli}.
\end{ex}

\begin{figure}[t]
\includegraphics[width=0.8\textwidth]{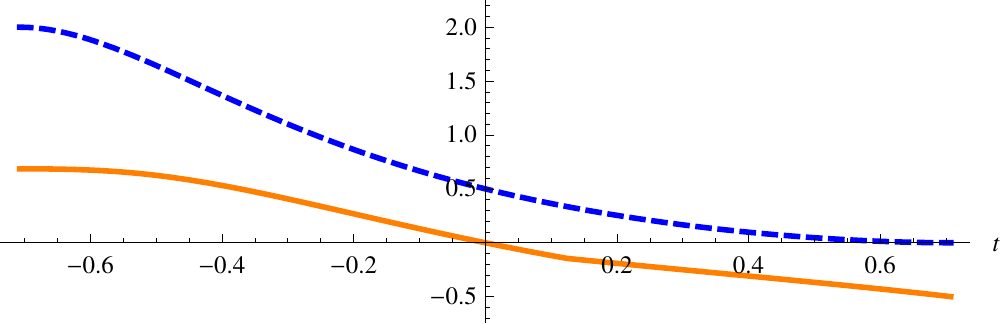}
\caption{Stability of the curve of solvsolitons in Example \ref{ex:nil3-solv-ext}.  The solid line is $\max Q_t(h)/|h|^2$ and the dashed line is $\half \tr D_t$.}
\label{fig:nil3-moduli}
\end{figure}



\begin{bibdiv}
\begin{biblist}

\bib{Arroyo2011}{article}{
      author={Arroyo, Romina~M.},
       title={Filiform nilsolitons of dimension 8},
        date={2011},
        ISSN={0035-7596},
     journal={Rocky Mountain J. Math.},
      volume={41},
      number={4},
       pages={1025\ndash 1043},
         url={http://dx.doi.org/10.1216/RMJ-2011-41-4-1025},
}

\bib{Bamler2010-sym}{article}{
      author={Bamler, Richard~H.},
       title={Stability of symmetric spaces of noncompact type under {R}icci
  flow},
        date={2010},
     journal={Preprint},
      eprint={arXiv:1011.4267v1},
}

\bib{Berndt1995}{book}{
      author={Berndt, J{\"u}rgen},
      author={Tricerri, Franco},
      author={Vanhecke, Lieven},
       title={Generalized {H}eisenberg groups and {D}amek-{R}icci harmonic
  spaces},
      series={Lecture Notes in Mathematics},
   publisher={Springer-Verlag},
     address={Berlin},
        date={1995},
      volume={1598},
        ISBN={3-540-59001-3},
}

\bib{Besse2008}{book}{
      author={Besse, Arthur~L.},
       title={Einstein manifolds},
      series={Classics in Mathematics},
   publisher={Springer-Verlag},
     address={Berlin},
        date={2008},
        ISBN={978-3-540-74120-6},
        note={Reprint of the 1987 edition},
}

\bib{CaoHe2013}{article}{
      author={Cao, Huai-Dong},
      author={He, Chenxu},
       title={Linear stability of perelman's $\nu$-entropy on symmetric spaces of
  compact type},
        date={2013},
     journal={J. reine angew. Math.},
		note={To appear}
}

\bib{CaoZhu2012}{article}{
      author={Cao, Huai-Dong},
      author={Zhu, Meng},
       title={On second variation of {P}erelman's {R}icci shrinker entropy},
        date={2012},
        ISSN={0025-5831},
     journal={Math. Ann.},
      volume={353},
      number={3},
       pages={747\ndash 763},
         url={http://dx.doi.org/10.1007/s00208-011-0701-0},
}

\bib{CheegerEbin2008}{book}{
      author={Cheeger, Jeff},
      author={Ebin, David~G.},
       title={Comparison theorems in {R}iemannian geometry},
   publisher={AMS Chelsea Publishing, Providence, RI},
        date={2008},
        ISBN={978-0-8218-4417-5},
        note={Revised reprint of the 1975 original},
}


\bib{Eberlein1994}{article}{
      author={Eberlein, Patrick},
       title={Geometry of {$2$}-step nilpotent groups with a left invariant
  metric},
        date={1994},
        ISSN={0012-9593},
     journal={Ann. Sci. \'Ecole Norm. Sup. (4)},
      volume={27},
      number={5},
       pages={611\ndash 660},
         url={http://www.numdam.org/item?id=ASENS_1994_4_27_5_611_0},
}

\bib{Eberlein2008}{incollection}{
      author={Eberlein, Patrick},
       title={Riemannian 2-step nilmanifolds with prescribed {R}icci tensor},
        date={2008},
   booktitle={Geometric and probabilistic structures in dynamics},
      series={Contemp. Math.},
      volume={469},
   publisher={Amer. Math. Soc.},
     address={Providence, RI},
       pages={167\ndash 195},
         url={http://dx.doi.org/10.1090/conm/469/09166},
}

\bib{EberleinHeber1996}{article}{
      author={Eberlein, Patrick},
      author={Heber, Jens},
       title={Quarter pinched homogeneous spaces of negative curvature},
        date={1996},
        ISSN={0129-167X},
     journal={Internat. J. Math.},
      volume={7},
      number={4},
       pages={441\ndash 500},
         url={http://dx.doi.org/10.1142/S0129167X96000268},
}

\bib{FernandezCulma2012}{article}{
      author={Fern{\'a}ndez-Culma, E.~A.},
       title={Classification of 7-dimensional {E}instein nilradicals},
        date={2012},
        ISSN={1083-4362},
     journal={Transform. Groups},
      volume={17},
      number={3},
       pages={639\ndash 656},
         url={http://dx.doi.org/10.1007/s00031-012-9186-5},
}

\bib{GuentherIsenbergKnopf2002}{article}{
      author={Guenther, Christine},
      author={Isenberg, James},
      author={Knopf, Dan},
       title={Stability of the {R}icci flow at {R}icci-flat metrics},
        date={2002},
        ISSN={1019-8385},
     journal={Comm. Anal. Geom.},
      volume={10},
      number={4},
       pages={741\ndash 777},
}

\bib{GuentherIsenbergKnopf2006}{article}{
      author={Guenther, Christine},
      author={Isenberg, James},
      author={Knopf, Dan},
       title={Linear stability of homogeneous {R}icci solitons},
        date={2006},
        ISSN={1073-7928},
     journal={Int. Math. Res. Not.},
       pages={Art. ID 96253, 30},
         url={http://dx.doi.org/10.1155/IMRN/2006/96253},
}

\bib{HallMurphy2011}{article}{
      author={Hall, Stuart~J.},
      author={Murphy, Thomas},
       title={On the linear stability of {K}\"ahler-{R}icci solitons},
        date={2011},
        ISSN={0002-9939},
     journal={Proc. Amer. Math. Soc.},
      volume={139},
      number={9},
       pages={3327\ndash 3337},
         url={http://dx.doi.org/10.1090/S0002-9939-2011-10948-1},
}

\bib{Heber1998}{article}{
      author={Heber, Jens},
       title={Noncompact homogeneous {E}instein spaces},
        date={1998},
        ISSN={0020-9910},
     journal={Invent. Math.},
      volume={133},
      number={2},
       pages={279\ndash 352},
         url={http://dx.doi.org/10.1007/s002220050247},
}

\bib{Heintze1974}{article}{
      author={Heintze, Ernst},
       title={On homogeneous manifolds of negative curvature},
        date={1974},
        ISSN={0025-5831},
     journal={Math. Ann.},
      volume={211},
       pages={23\ndash 34},
}

\bib{Jablonski2011-existence}{article}{
      author={Jablonski, Michael},
       title={Concerning the existence of {E}instein and {R}icci soliton
  metrics on solvable {L}ie groups},
        date={2011},
        ISSN={1465-3060},
     journal={Geom. Topol.},
      volume={15},
      number={2},
       pages={735\ndash 764},
         url={http://dx.doi.org/10.2140/gt.2011.15.735},
}

\bib{Jablonski2011-hom}{article}{
      author={Jablonski, Michael},
       title={Homogeneous {R}icci solitons},
        date={2013},
				journal={J. reine angew. Math.},
        note={To appear.},
}

\bib{Jablonski2011-moduli}{article}{
      author={Jablonski, Michael},
       title={Moduli of {E}instein and non-{E}instein nilradicals},
        date={2011},
        ISSN={0046-5755},
     journal={Geom. Dedicata},
      volume={152},
       pages={63\ndash 84},
         url={http://dx.doi.org/10.1007/s10711-010-9546-z},
}

\bib{KadiogluPayne2013}{article}{
      author={Kadioglu, H\"ulya},
      author={Payne, Tracy~L.},
       title={Computational methods for nilsoliton metric lie algebras {I}},
        date={2013},
        ISSN={0747-7171},
     journal={Journal of Symbolic Computation},
      volume={50},
      number={0},
       pages={350 \ndash  373},
  url={http://www.sciencedirect.com/science/article/pii/S0747717112001381},
}

\bib{Kaplan1980}{article}{
      author={Kaplan, Aroldo},
       title={Fundamental solutions for a class of hypoelliptic {PDE} generated
  by composition of quadratic forms},
        date={1980},
        ISSN={0002-9947},
     journal={Trans. Amer. Math. Soc.},
      volume={258},
      number={1},
       pages={147\ndash 153},
         url={http://dx.doi.org/10.2307/1998286},
}

\bib{Kaplan1981}{article}{
      author={Kaplan, Aroldo},
       title={Riemannian nilmanifolds attached to {C}lifford modules},
        date={1981},
        ISSN={0304-4637},
     journal={Geom. Dedicata},
      volume={11},
      number={2},
       pages={127\ndash 136},
         url={http://dx.doi.org/10.1007/BF00147615},
}

\bib{Knopf2009}{article}{
      author={Knopf, Dan},
       title={Convergence and stability of locally {$\mathbb{R}^N$}-invariant
  solutions of {R}icci flow},
        date={2009},
        ISSN={1050-6926},
     journal={J. Geom. Anal.},
      volume={19},
      number={4},
       pages={817\ndash 846},
  url={http://dx.doi.org.ezproxy.lib.utexas.edu/10.1007/s12220-009-9091-x},
}

\bib{KnopfYoung2009}{article}{
      author={Knopf, Dan},
      author={Young, Andrea},
       title={Asymptotic stability of the cross curvature flow at a hyperbolic
  metric},
        date={2009},
        ISSN={0002-9939},
     journal={Proc. Amer. Math. Soc.},
      volume={137},
      number={2},
       pages={699\ndash 709},
         url={http://dx.doi.org/10.1090/S0002-9939-08-09534-8},
}

\bib{Koiso1979}{article}{
      author={Koiso, Norihito},
       title={On the second derivative of the total scalar curvature},
        date={1979},
        ISSN={0030-6126},
     journal={Osaka J. Math.},
      volume={16},
      number={2},
       pages={413\ndash 421},
         url={http://projecteuclid.org/getRecord?id=euclid.ojm/1200772084},
}

\bib{Koiso1980}{article}{
      author={Koiso, Norihito},
       title={Rigidity and stability of {E}instein metrics---the case of
  compact symmetric spaces},
        date={1980},
        ISSN={0030-6126},
     journal={Osaka J. Math.},
      volume={17},
      number={1},
       pages={51\ndash 73},
         url={http://projecteuclid.org/getRecord?id=euclid.ojm/1200772808},
}

\bib{Koiso1982}{article}{
      author={Koiso, Norihito},
       title={Rigidity and infinitesimal deformability of {E}instein metrics},
        date={1982},
        ISSN={0030-6126},
     journal={Osaka J. Math.},
      volume={19},
      number={3},
       pages={643\ndash 668},
         url={http://projecteuclid.org/getRecord?id=euclid.ojm/1200775325},
}

\bib{Lauret2002}{article}{
      author={Lauret, Jorge},
       title={Finding {E}instein solvmanifolds by a variational method},
        date={2002},
        ISSN={0025-5874},
     journal={Math. Z.},
      volume={241},
      number={1},
       pages={83\ndash 99},
         url={http://dx.doi.org/10.1007/s002090100407},
}

\bib{Lauret2010-std}{article}{
      author={Lauret, Jorge},
       title={Einstein solvmanifolds are standard},
        date={2010},
        ISSN={0003-486X},
     journal={Ann. of Math. (2)},
      volume={172},
      number={3},
       pages={1859\ndash 1877},
         url={http://dx.doi.org/10.4007/annals.2010.172.1859},
}

\bib{Lauret2011-sol}{article}{
      author={Lauret, Jorge},
       title={Ricci soliton solvmanifolds},
        date={2011},
        ISSN={0075-4102},
     journal={J. Reine Angew. Math.},
      volume={650},
       pages={1\ndash 21},
         url={http://dx.doi.org/10.1515/CRELLE.2011.001},
}

\bib{LauretWill2013}{article}{
      author={Lauret, Jorge},
      author={Will, Cynthia},
       title={On the diagonalization of the {R}icci flow on lie groups},
        date={2013},
     journal={Proc. Amer. Math. Soc.},
        note={In press.},
}

\bib{Lott2010}{article}{
      author={Lott, John},
       title={Dimensional reduction and the long-time behavior of {R}icci
  flow},
        date={2010},
        ISSN={0010-2571},
     journal={Comment. Math. Helv.},
      volume={85},
      number={3},
       pages={485\ndash 534},
         url={http://dx.doi.org.ezproxy.lib.utexas.edu/10.4171/CMH/203},
}

\bib{Nikolayevsky2011}{article}{
      author={Nikolayevsky, Yuri},
       title={Einstein solvmanifolds and the pre-{E}instein derivation},
        date={2011},
        ISSN={0002-9947},
     journal={Trans. Amer. Math. Soc.},
      volume={363},
      number={8},
       pages={3935\ndash 3958},
         url={http://dx.doi.org/10.1090/S0002-9947-2011-05045-2},
}


\bib{Will2003}{article}{
      author={Will, Cynthia},
       title={Rank-one {E}instein solvmanifolds of dimension 7},
        date={2003},
        ISSN={0926-2245},
     journal={Differential Geom. Appl.},
      volume={19},
      number={3},
       pages={307\ndash 318},
         url={http://dx.doi.org/10.1016/S0926-2245(03)00037-8},
}

\bib{Will2011}{article}{
      author={Will, Cynthia},
       title={The space of solvsolitons in low dimensions},
        date={2011},
        ISSN={0232-704X},
     journal={Ann. Global Anal. Geom.},
      volume={40},
      number={3},
       pages={291\ndash 309},
         url={http://dx.doi.org/10.1007/s10455-011-9258-0},
}

\bib{Williams2013-systems}{article}{
      author={Williams, Michael~Bradford},
       title={Stability of solutions of certain extended {R}icci flow systems},
        date={2013},
     journal={Preprint},
      eprint={arXiv:1301.3945},
}

\bib{WilliamsWu2013-dynamical}{article}{
      author={Williams, Michael~Bradford},
      author={Wu, Haotian},
       title={Dynamical stability of algebraic {R}icci solitons},
        date={2013},
		journal = {J. reine angew. Math.}
        note={To appear.},
}

\bib{Wolter1991}{article}{
      author={Wolter, T.~H.},
       title={Einstein metrics on solvable groups},
        date={1991},
        ISSN={0025-5874},
     journal={Math. Z.},
      volume={206},
      number={3},
       pages={457\ndash 471},
         url={http://dx.doi.org/10.1007/BF02571355},
}

\bib{Wu2013}{article}{
      author={Wu, Haotian},
       title={Stability of complex hyperbolic space under curvature-normalized
  ricci flow},
        date={2013},
        ISSN={0046-5755},
     journal={Geometriae Dedicata},
      volume={164},
      number={1},
       pages={231\ndash 258},
         url={http://dx.doi.org/10.1007/s10711-012-9770-9},
}

\bib{Young2010}{article}{
      author={Young, Andrea},
       title={Stability of {R}icci {Y}ang-{M}ills flow at {E}instein
  {Y}ang-{M}ills metrics},
        date={2010},
        ISSN={1019-8385},
     journal={Comm. Anal. Geom.},
      volume={18},
      number={1},
       pages={77\ndash 100},
}

\end{biblist}
\end{bibdiv}

\end{document}